\theoremstyle{plain}
\newtheorem{thm}{Theorem}
\newtheorem{cor}[thm]{Corollary}
\newtheorem{lem}[thm]{Lemma}
\theoremstyle{remark}
\newtheorem*{rem}{Remark}
\numberwithin{equation}{section}
\newcommand{\bfR}{\mathbf{R}}
\newcommand{\bfH}{{\mathbf H}}
\newcommand{\bfK}{{\mathbf K}}
\newcommand{\erfc}{\operatorname{erfc}}
\newcommand{\R}{{\mathbb R}}
\newcommand{\C}{{\mathbb C}}
\newcommand{\E}{{\mathbf E}}
\newcommand{\eps}{{\varepsilon}}
\newcommand{\re}{\operatorname{Re}}
\newcommand{\RN}[1]{%
	\textup{\uppercase\expandafter{\romannumeral#1}}%
}
\newcommand{\overbar}[1]{\mkern 1.5mu\overline{\mkern-1.5mu#1\mkern-1.5mu}\mkern 1.5mu}
\begin{document}


\author{Gernot Akemann}

\address{Faculty of Physics, Bielefeld University, P.O. Box 100131, 33501 Bielefeld, Germany}
\email{akemann@physik.uni-bielefeld.de}

\author{Sung-soo Byun}

\address{Department of Mathematical Sciences, Seoul National University, Seoul, 151-747, Republic of Korea}
\email{sungsoobyun@snu.ac.kr}

\author{Nam-Gyu Kang}

\address{School of Mathematics, Korea Institute for Advanced Study, Seoul, 02455, Republic of Korea}
\email{namgyu@kias.re.kr}


\title[A non-Hermitian generalisation of the Marchenko-Pastur 
law]{A non-Hermitian generalisation of the Marchenko-Pastur distribution: \linebreak from the circular law to multi-criticality}

\begin{abstract}
We consider the complex eigenvalues of a Wishart type random matrix model $X=X_1 X_2^*$, where two rectangular complex Ginibre matrices $X_{1,2}$ of size $N\times (N+\nu)$ are correlated through a non-Hermiticity parameter $\tau\in[0,1]$. 
For general $\nu=O(N)$ and $\tau$ we obtain the global limiting density and its support, given by a shifted ellipse. It provides a non-Hermitian generalisation 
of the Marchenko-Pastur distribution, which is recovered at maximal correlation $X_1=X_2$ when $\tau=1$. The square root of the complex Wishart eigenvalues, corresponding to the non-zero complex eigenvalues of the Dirac matrix 
$\mathcal{D}=\begin{pmatrix} 
0 & X_1 \\
X_2^*  & 0 
\end{pmatrix},$
are supported in a domain parametrised by a quartic equation. 
It displays a lemniscate type transition at a critical value $\tau_c,$ where the interior of the spectrum splits into two connected components.
At multi-criticality we obtain the limiting local kernel given by the edge kernel of the Ginibre ensemble in squared variables. For the global statistics we apply Frostman's equilibrium problem to the 2D Coulomb gas, whereas the local statistics follows from a saddle point analysis of the kernel of orthogonal Laguerre polynomials in the complex plane.
\end{abstract}

\thanks{ The authors are grateful to the DFG-NRF International Research Training Group IRTG 2235 supporting the Bielefeld-Seoul graduate exchange programme. 
Furthermore, Gernot Akemann was partially supported 
by the DFG through the grant  CRC 1283 ``Taming uncertainty and profiting from randomness and low regularity in analysis, stochastics and their applications'' and by the Knut and Alice Wallenberg Foundation. 
Nam-Gyu Kang was partially supported by Samsung Science and Technology Foundation (SSTF-BA1401-51) and by a KIAS Individual Grant (MG058103) at Korea Institute for Advanced Study.}
\keywords{Non-Hermitian random matrices, chiral ensembles, planar orthogonal polynomials, multi-criticality, singular boundary point, scaling limit}
\subjclass[2020]{Primary 60B20; 
Secondary 33C45,
76D27
}

\maketitle
\section{Introduction and Discussion of Main Results}\label{Intro}

In this work we study the complex eigenvalues of the product of two rectangular complex random matrices that are correlated, which is known to form a determinantal point process. One of our goals is to find an interpolation between classical results for random matrices on the global scale, the circular law \cite{girko1985circular} and the Marchenko-Pastur distribution \cite{MR0208649}. Furthermore, we are interested in the local behaviour of correlation functions, in particular at multi-critical points.

The product ensemble of random matrices that we analyse can be seen as a multiplicative version of the elliptic Ginibre ensemble also called Ginibre-Girko ensemble \cite{girko1986elliptic,MR948613},  where the sum of a complex Hermitian and anti-Hermitian random matrix is considered, that are coupled through a non-Hermiticity parameter $\tau$. It allows to interpolate between the circular law for independent matrices and the semi-circle law in the Hermitian limit on a global scale. The local correlations have been shown to be universal in the bulk and at the edge of the spectrum \cite{MR3845296,lee2016fine,MR2722794}.

The random two-matrix model we consider has appeared under the name of \textit{chiral complex Ginibre} or \textit{non-Hermitian Wishart ensemble}. The former name has been used in an application to the Dirac operator spectrum of Quantum Chromodynamics (QCD) with chemical potential \cite{osborn2004universal} (see also \cite{stephanov1996random}), whereas the latter was used as a proposed model for the analysis of time series, e.g., when building a covariance matrices from time-lagged correlation matrices \cite{kanzieper2010non}. Also in \cite{kanzieper2010non,osborn2004universal}, special cases of global and local statistics were analysed. 
In addition to the parameter $\tau$ as in the elliptic Ginibre ensemble, we have a second parameter related to the rectangularity or zero eigenvalues of the random matrices. The singular value statistics of this ensemble has been analysed as well \cite{MR3509011}. 
We also refer to \cite{MR2881072} for an extension of the Ginibre ensemble to include zero eigenvalues.

Let us be more precise now in introducing our model. For given non-negative integers $N,\nu$, let $P$ and $Q$ be $N \times (N+\nu)$ random matrices with independent complex Gaussian entries of mean $0$ and variance $1/(4N)$. These are the building blocks of the two correlated random matrices
\begin{equation}
\label{X12def}
X_1=\sqrt{1+\tau} \, P+\sqrt{1-\tau} \, Q, \quad X_2=\sqrt{1+\tau} \, P-\sqrt{1-\tau} \, Q. 
\end{equation}
Here, $\tau \in [0,1]$ is a \textit{non-Hermiticity} parameter, and we use the conventions of \cite{akemann2010interpolation}. Only for $\tau=0$, $X_1$ and $X_2$ are again uncorrelated Gaussian random matrices. In the other extremal case $\tau=1$, the two matrices become perfectly correlated, $X_1=X_2$. 
In the following we consider the complex eigenvalues of the non-Hermitian Wishart matrix $X$, given by the product of the two:
\begin{equation}
X:=X_1 X_2^*.
\label{Wishart}
\end{equation}
Throughout this article, we shall use the notation
\begin{equation} \label{nu alpha}
\alpha_N:=\frac{\nu}{N}, \quad \lim_{N \to \infty} \alpha_N=\alpha \in [0,\infty).
\end{equation}
The empirical measure $\mu_N$ associated with $X$ is given by 
\begin{equation}
\widehat{\mu}_N:=\frac{1}{N} \sum_{j=1}^N  \delta_{ \widehat{\zeta}_j },
\end{equation}
where $\widehat{\boldsymbol{\zeta}}=\{ \widehat{\zeta}_j \}_{j=1}^N$ are the $N$  complex eigenvalues of $X$, that is the solutions of the characteristic equation 
$0=\det[\widehat{\zeta}-X]$.

It is well known \cite{MR2736204, MR2787469,gotze2010asymptotic} that for the product of $M$ independent complex Gaussian matrices, that is in our case at $M=2$ and at $\tau= \alpha_N = 0$ (see \cite{MR2574103} for an earlier work), the limiting distribution is given by
\begin{equation} \label{product}
d\widehat{\mu}(\zeta)=\frac{1}{ M|\zeta|^{2-2/M}  }\cdot \mathbbm{1}_{ \mathbb{D}}(\zeta) \, dA(\zeta)
\end{equation}
on the unit disc $\mathbb{D}$, where  $dA(\zeta)=d^2\zeta/\pi$ is the two-dimensional (2D) Lebesgue measure divided by $\pi$. On the other hand, in the Hermitian limit for general $\alpha\geq0$, the matrix $X$ becomes positive definite and we have the classical Wishart ensemble. Therefore the limiting spectral distribution follows the Marchenko-Pastur law \cite{MR0208649}
\begin{equation} \label{MP}
\frac{1}{2\pi}\frac{\sqrt{(\lambda_{+}-x) (x-\lambda_{-})  }}{x}\cdot  \mathbbm{1}_{ [\lambda_{-},\lambda_{+} ] }(x), \quad   \lambda_{\pm}:=(\sqrt{\alpha+1}\pm 1)^2.
\end{equation} 
Fig.~\ref{Fig. NWisart} shows some random samplings of eigenvalues of $X$ interpolating these two situations,  with different values $\tau \in [0,1]$ at $\alpha_N=0$ and $\alpha_N=1$.

\begin{figure}[t]
	
	\begin{subfigure}{0.24\textwidth}
		\begin{center}	
			\includegraphics[width=1.046in,height=0.8in]{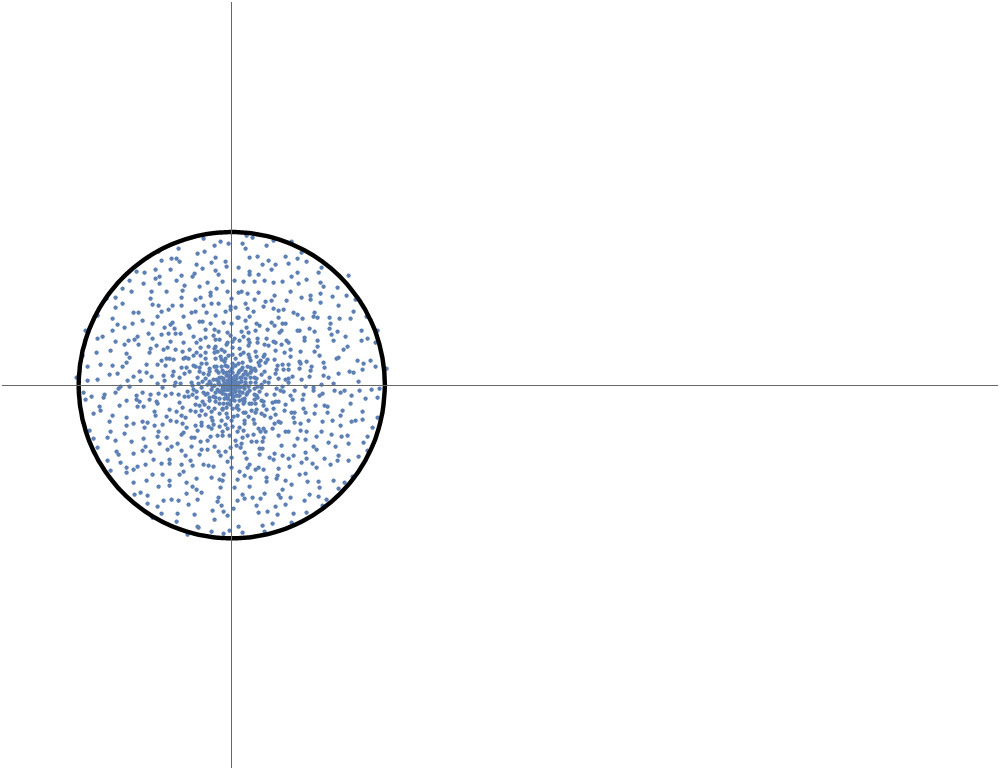}
		\end{center}
		\subcaption{$\tau=0$}
	\end{subfigure}	
    	\begin{subfigure}{0.24\textwidth}
    	\begin{center}	
    		\includegraphics[width=1.046in,height=0.8in]{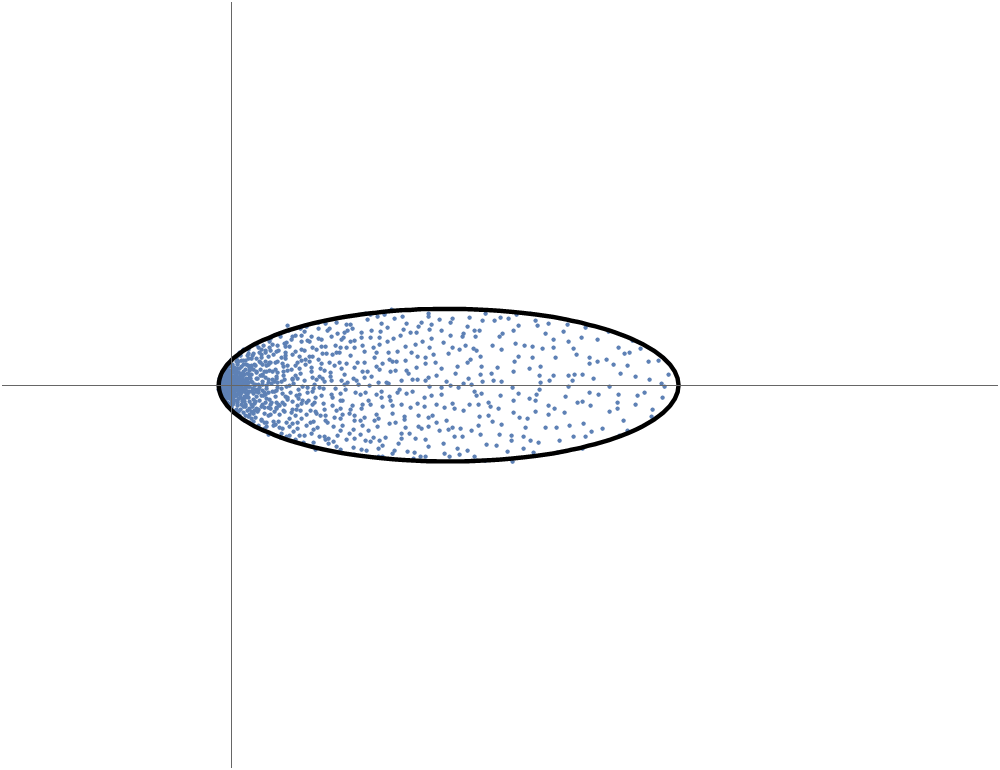}
    	\end{center}
    	\subcaption{$\tau=1/\sqrt{2}$}
    \end{subfigure}	
    \begin{subfigure}{0.24\textwidth}
    	\begin{center}	
    		\includegraphics[width=1.046in,height=0.8in]{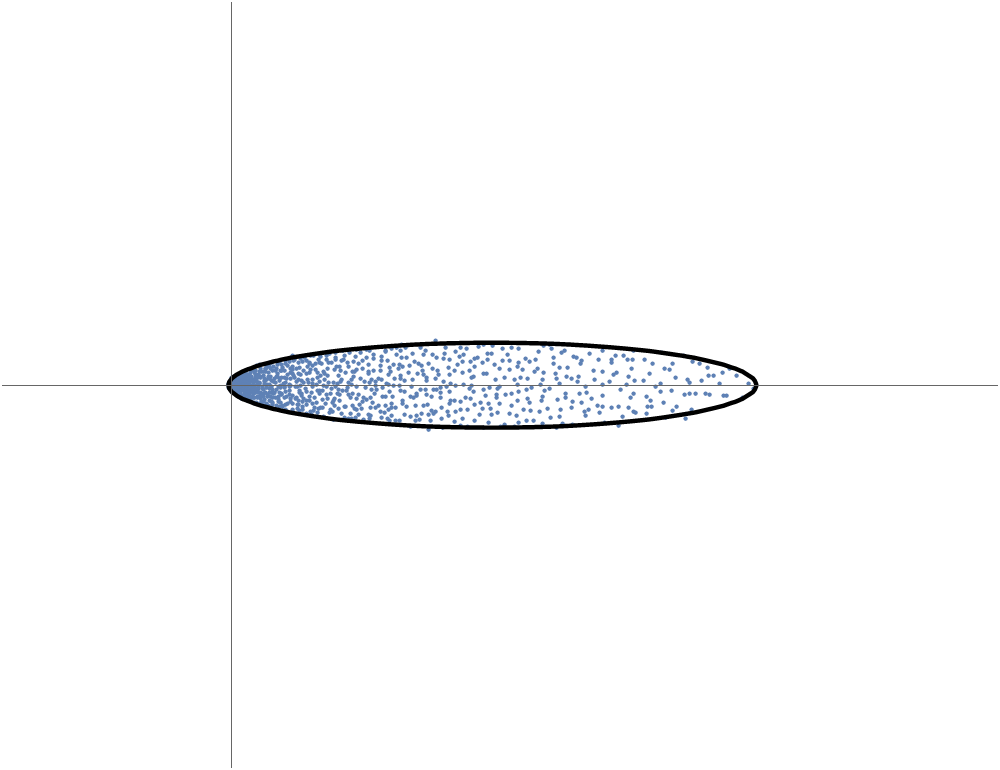}
    	\end{center}
    	\subcaption{$\tau=0.85$}
    \end{subfigure}	
    \begin{subfigure}{0.24\textwidth}
    	\begin{center}	
    		\includegraphics[width=0.965in,height=0.8in]{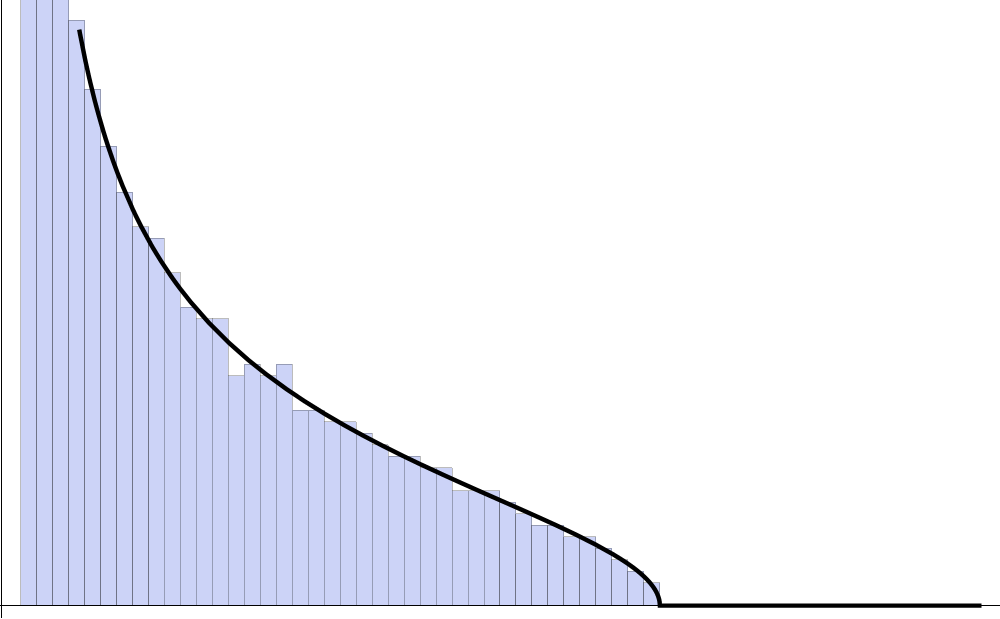}
    	\end{center}
    	\subcaption{$\tau=1 $}
    \end{subfigure}	
    
    \vspace{0.5em}
    
	\begin{subfigure}{0.24\textwidth}
		\begin{center}	
			\includegraphics[width=1.046in,height=0.8in]{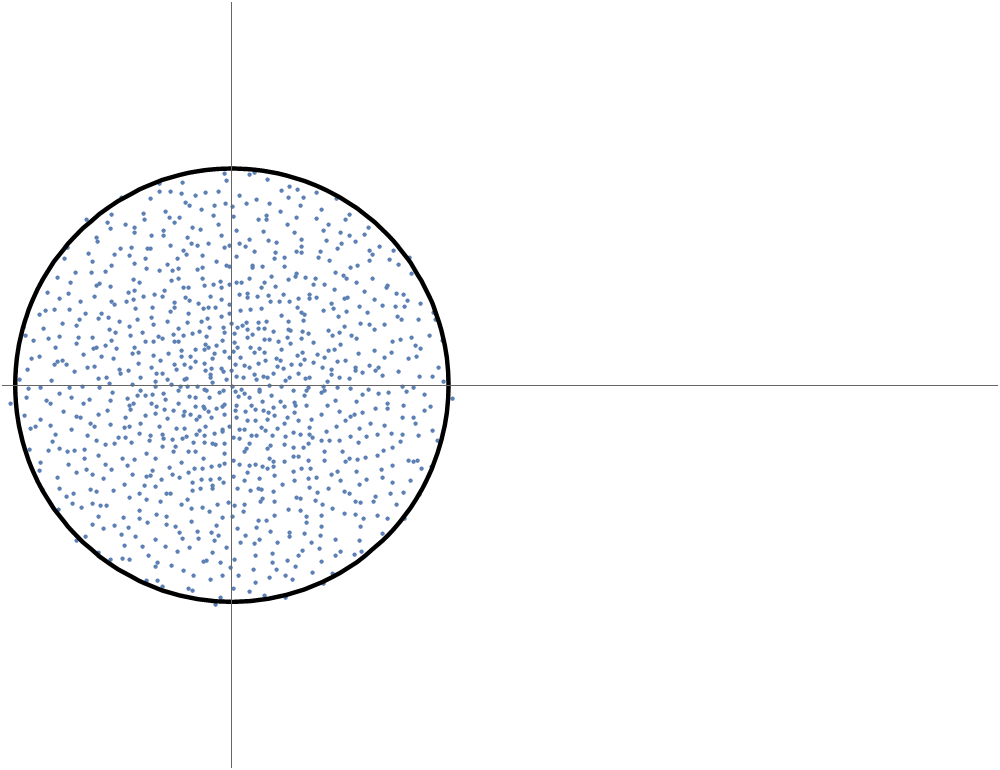}
		\end{center}
		\subcaption{$\tau=0$}
	\end{subfigure}		
	\begin{subfigure}{0.24\textwidth}
		\begin{center}	
			\includegraphics[width=1.046in,height=0.8in]{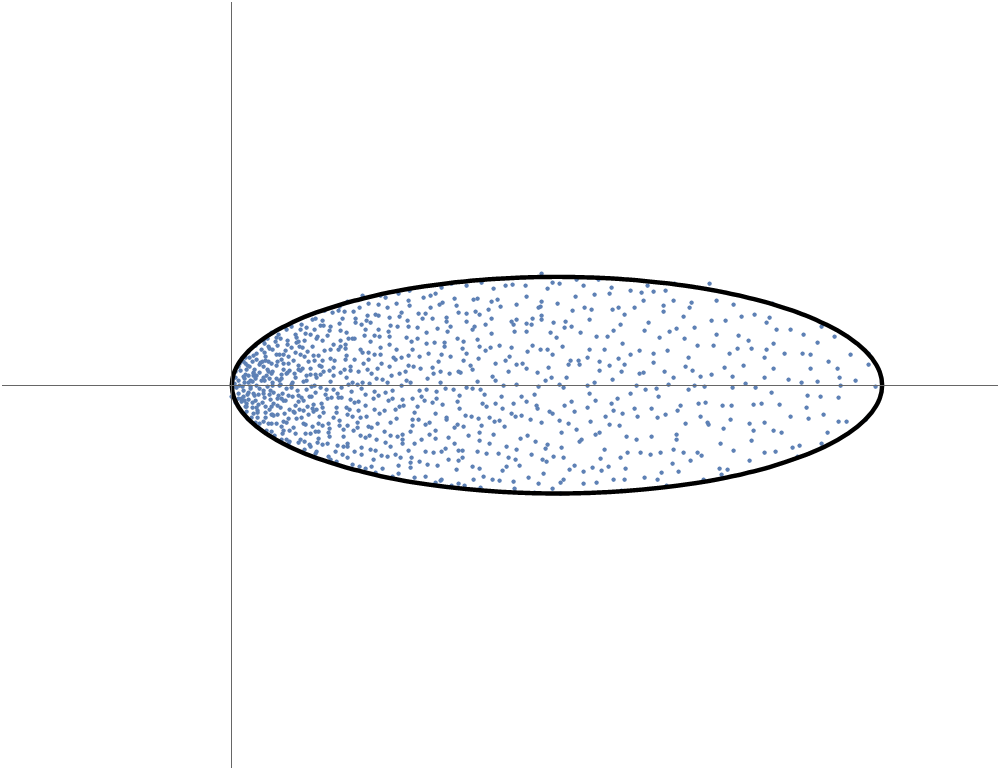}
		\end{center}
		\subcaption{$\tau=1/\sqrt{2}$}
	\end{subfigure}	
	\begin{subfigure}{0.24\textwidth}
		\begin{center}	
			\includegraphics[width=1.046in,height=0.8in]{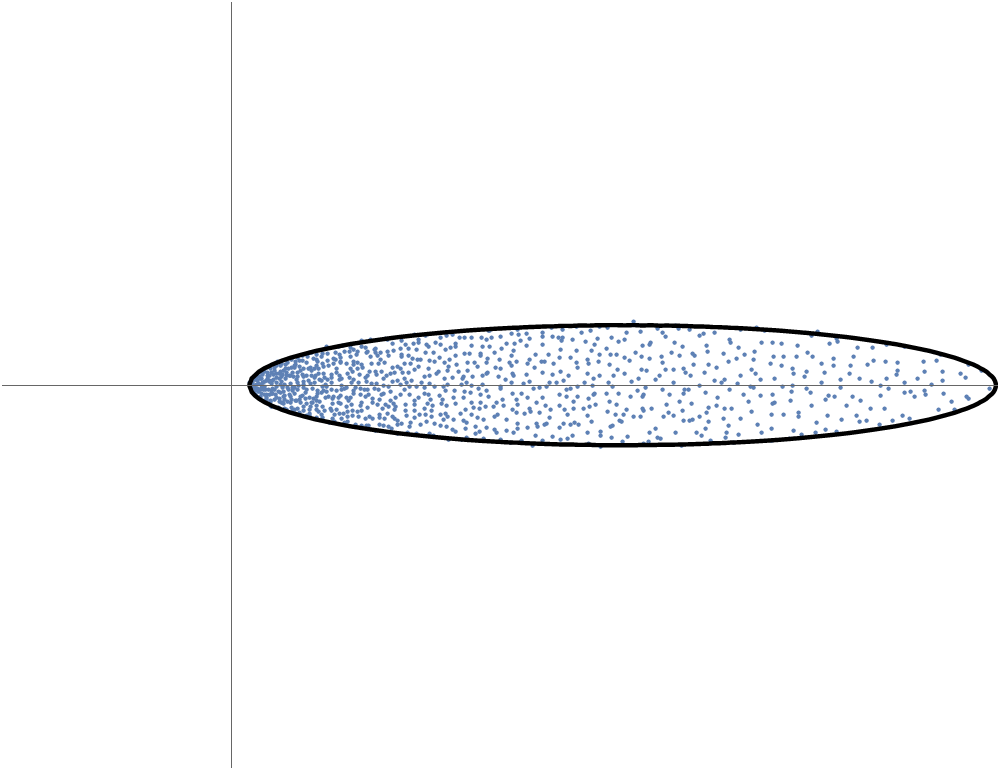}
		\end{center}
		\subcaption{$\tau=0.85 $}
	\end{subfigure}		
	\begin{subfigure}{0.24\textwidth}
		\begin{center}	
			\includegraphics[width=0.965in,height=0.8in]{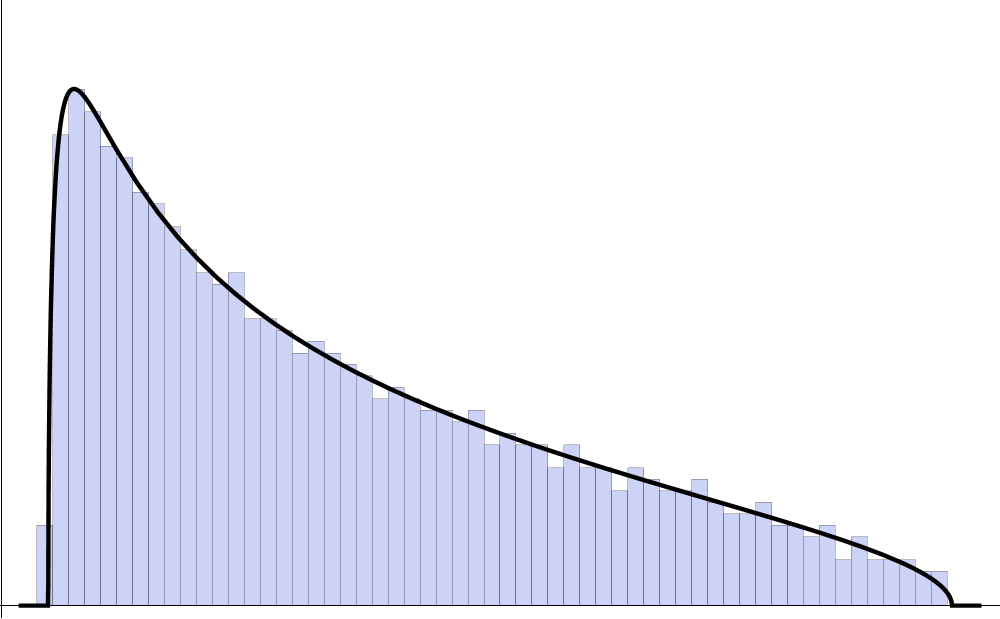}
		\end{center}
		\subcaption{$\tau=1$}
	\end{subfigure}		
	\caption{ The eigenvalues of $X$, where $N=1000.$ Here $\nu=0$ for the figures (A)~--~(D) in the top row and $\nu=N$ for (E)~--~(H) in the bottom row. In particular, when $\tau=1$, the figures (D) and (H) display histograms of the positive eigenvalues whose distributions follow the Marchenko-Pastur law \eqref{MP} with $\alpha=0$ and $\alpha=1$, respectively. 
	} \label{Fig. NWisart}
\end{figure} 

In our first main result Theorem~\ref{Thm_NWishart}  we derive the limiting global spectral distribution of $X$ for arbitrary $\alpha$ and $\tau$, 
which includes the above-mentioned limiting cases. In that sense it
provides a \textit{non-Hermitian generalisation of the Marchenko-Pastur law}.
In particular, we show that the droplet (support of the spectrum) is enclosed by an ellipse with foci $c_{\pm}:=\tau \lambda_{\pm}.$ 

We say that the empirical measure $\widehat{\mu}_N$ weakly converges to $\widehat{\mu}$ if for each bounded continuous function $f$, 
\begin{equation}
\frac{1}{N} \widehat{\E}_N\Big[ f(\widehat{\zeta}_1)+\cdots +f(\widehat{\zeta}_N) \Big] \to \int f \, d \widehat{\mu}, 
\end{equation}
where $\widehat{\E}_N$ is the expectation with respect to the underlying Gibbs measure.

\begin{thm}\label{Thm_NWishart}
As $N \to \infty$, the empirical measure $\widehat{\mu}_N$ weakly converges to $\widehat{\mu}$, where 
\begin{equation} \label{NWishart_density}
d\widehat{\mu}(\zeta):=\frac{1}{1-\tau^2}  \frac{1}{ \sqrt{    4 |\zeta|^2+ (1-\tau^2 )^2 \alpha ^2   }  }\cdot \mathbbm{1}_{ \widehat{S}_\alpha }(\zeta) \, dA(\zeta).
\end{equation}
 Here, the support $ \widehat{S}_\alpha $ of the spectrum is given by
\begin{equation} \label{hat S alpha}
 \widehat{S}_\alpha := \Big\{ \zeta=x+iy:  \Big( \frac{x-\tau (2+\alpha)  }
{ (1+\tau^2) \sqrt{1+\alpha} }   \Big)^2+\Big( \frac{y}{ (1-\tau^2)  \sqrt{1+\alpha} }  \Big) ^2 \le 1 \Big\}.
\end{equation}
\end{thm}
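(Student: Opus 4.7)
The strategy is the standard 2D Coulomb gas / weighted potential theory approach, as already flagged by the abstract. The joint density of the complex eigenvalues of $X$, as derived in \cite{osborn2004universal,kanzieper2010non}, has the Coulomb gas form
\begin{equation*}
\frac{1}{Z_N} \prod_{j=1}^N w_N(\widehat\zeta_j)\prod_{i<j}|\widehat\zeta_i - \widehat\zeta_j|^2,
\end{equation*}
where the weight $w_N(\zeta)$ is expressed through the modified Bessel function of the second kind, schematically $w_N(\zeta) \propto |\zeta|^\nu K_\nu\!\bigl(\tfrac{2N|\zeta|}{1-\tau^2}\bigr)\exp\!\bigl(\tfrac{2N\tau \re\zeta}{1-\tau^2}\bigr)$. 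Writing $w_N=\exp(-NQ_N)$ converts the problem into a planar Coulomb gas in an $N$-dependent external potential $Q_N$, whose limit $Q:=\lim_N Q_N$ drives the global statistics.

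The first step is to extract $Q$ by applying the uniform asymptotic expansion of $K_\nu(\nu z)$ valid in the regime $\nu=\alpha_N N$, $|\zeta|=O(N)$. To leading order this produces
\begin{equation*}
Q(\zeta) \;=\; \frac{1}{1-\tau^2}\Bigl(\sqrt{4|\zeta|^2+(1-\tau^2)^2\alpha^2} - 2\tau\re\zeta\Bigr) + c(\alpha),
\end{equation*}
with $c(\alpha)$ an explicit constant irrelevant for the density. Because $\re\zeta$ is harmonic, the Laplacian is governed only by the radial $\sqrt{\cdot}$ piece, and a direct computation gives
\begin{equation*}
\Delta Q(\zeta) \;=\; \frac{4}{1-\tau^2}\cdot\frac{1}{\sqrt{4|\zeta|^2+(1-\tau^2)^2\alpha^2}},
\end{equation*}
which is four times the density claimed in \eqref{NWishart_density}.

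The second step is to invoke the standard weak-convergence theorem for 2D Coulomb gases with sufficiently regular external potential (e.g. Hedenmalm--Makarov, Ameur--Hedenmalm--Makarov, Serfaty--Sandier): $\widehat\mu_N$ converges weakly in probability to the unique Frostman equilibrium measure $\mu_Q$ minimising $\iint\log|z-w|^{-1}d\mu(z)d\mu(w)+\int Q\,d\mu$ among probability measures. Inside its support this measure has density $\tfrac{1}{4}\Delta Q$ with respect to $dA$, matching \eqref{NWishart_density}. What remains is to identify the droplet as the shifted ellipse $\widehat S_\alpha$. I would do this by verifying the two characterisations jointly: (i) $\int_{\widehat S_\alpha}\tfrac14 \Delta Q\,dA=1$, done by a parametrisation of the ellipse exploiting that the level sets of $\sqrt{4|\zeta|^2+(1-\tau^2)^2\alpha^2}+2\tau\re\zeta$ are confocal ellipses with foci $c_\pm=\tau\lambda_\pm$; (ii) the Euler--Lagrange condition $U^{\mu_Q}+\tfrac12 Q\equiv \text{const}$ on $\widehat S_\alpha$ with the reverse inequality off it.

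The main obstacle, in my view, is (ii): direct global verification of the variational inequality is awkward because $Q$ is neither polynomial nor radial. A cleaner route is to use the Joukowski map from the exterior of a disc onto the exterior of $\widehat S_\alpha$ and compute the logarithmic potential of the candidate $\mu_Q$ via balayage onto the boundary, reducing the problem to a one-dimensional identity along $\partial\widehat S_\alpha$. Alternatively, since we already know the correct bulk density, one can appeal to the uniqueness of the droplet for a real-analytic subharmonic $Q$ and check only the local $C^1$-matching of the obstacle function $\eqpot$ across $\partial\widehat S_\alpha$ at a single point, which pins down the ellipse parameters. A minor secondary technicality is the mild singularity of $Q_N$ near $\zeta=0$ coming from the $|\zeta|^\nu K_\nu$ factor when $\alpha>0$; this is harmless since $Q_N$ remains lower semicontinuous and uniformly bounded below on compacts, so the convergence result applies without modification.
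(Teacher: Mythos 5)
Your overall strategy (Coulomb gas with $N$-dependent potential, concentration on Frostman's equilibrium measure, density $=\Delta Q$ inside the droplet, then identify the droplet) is exactly the paper's, but there is a concrete error at the first step and a genuine gap at the last one. The limiting potential you extract from the Bessel asymptotics is wrong: the uniform expansion of $K_\nu(\nu z)$ carries the factor $\bigl(\frac{1+\sqrt{1+|z|^2}}{|z|}\bigr)^\nu$, and after combining with the weight $|\zeta|^\nu$ this leaves a genuinely $\zeta$-dependent term $-\alpha\log\bigl(\sqrt{A^2|\zeta|^2+\alpha^2}+\alpha\bigr)$ (with $A=2/(1-\tau^2)$), which you have absorbed into the "constant" $c(\alpha)$. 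This term is not harmonic: its contribution to $\partial\bar\partial Q$ is $-\tfrac{A^2\alpha^2}{4(A^2|\zeta|^2+\alpha^2)^{3/2}}$. Consequently the Laplacian of the potential you actually wrote down is $\tfrac{A^2(A^2|\zeta|^2+2\alpha^2)}{4(A^2|\zeta|^2+\alpha^2)^{3/2}}$, not the expression you state (at $\zeta=0$ it is off by a factor $2$), so your "direct computation" is inconsistent with your own $Q$, and for $\alpha>0$ the equilibrium problem you would be solving is not the one governing the model. The correct potential, including the log term, is precisely $\widetilde{Q}_\alpha$ in the paper, whose $\partial\bar\partial$-Laplacian does give the density \eqref{NWishart_density}.

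The second issue is that identifying $\widehat{S}_\alpha$ — which is the real content of the theorem — is left essentially unproven. Your shortcut (i) rests on the unsupported claim that level sets of the "radial plus linear" part of $Q$ are confocal ellipses with foci $\tau\lambda_\pm$; in any case the droplet is not a level set of the potential. Your shortcut (ii), checking $C^1$-matching of the obstacle function at a single boundary point and appealing to uniqueness, is circular: uniqueness of the equilibrium measure only helps once you have verified that your candidate satisfies the full characterisation, i.e.\ the mass-one condition and both Euler--Lagrange relations \eqref{E-L 1} globally (equality q.e.\ on the candidate ellipse, inequality everywhere outside), and a one-point condition pins down neither. The paper does this work explicitly: it expresses the Schwarz function of $\partial\widehat{S}$ through the Cauchy transform of $\widehat{\mu}$ (here the special separable form of $\partial_\zeta\widetilde{Q}_\alpha$, including the log term you dropped, is essential), deduces that the exterior conformal map is a shifted Joukowsky map, fixes the conformal radius by a residue computation of the total mass — where the answer depends on whether $0\in\operatorname{int}(\widehat{S})$, with a separate limiting argument when $0\in\partial\widehat{S}$, i.e.\ at $\tau=\tau_c$ — and then verifies the variational equality and inequality by computing $C(\zeta)$ inside and outside the droplet. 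None of these steps, in particular the global variational inequality and the $\tau=\tau_c$ boundary case, can be waved through; as written your proposal does not establish the ellipse \eqref{hat S alpha}.
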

Note that the origin is on the edge of the spectrum \eqref{hat S alpha} if and only if $\tau$ is given by the critical value
\begin{equation}
\tau_c:=\frac{1}{\sqrt{1+\alpha}}.
\end{equation} 
For $\tau<\tau_c$ the origin is inside and for $\tau>\tau_c$ outside the support.

We remark that for $\tau=\alpha_N=0$, Theorem~\ref{Thm_NWishart} reduces to the known density of the product of two independent Gaussian matrices as already mentioned, see \cite{MR2736204,MR2787469,MR2574103,gotze2010asymptotic,MR2861673}. Moreover, for $\tau=0$ with general $\alpha \ge 0$, the spectrum of $X$ was studied by Kanzieper and Singh in \cite{kanzieper2010non}. 
Apparently the limiting global spectral distribution in Theorem~\ref{Thm_NWishart} seems  to be universal. For instance, in \cite{benet2014spectral}, the same distribution was shown to follow  from loop equations for the product of two real rectangular random matrices correlated by a diagonal matrix. For the product of $M$ elliptic Ginibre matrices it was shown in \cite{MR3357969} that the law \eqref{product} is universal. 

In Section \ref{Section_global}, Theorem~\ref{Thm_NWishart} is derived using the fact that the empirical measure of 2D Coulomb gases concentrates on Frostman's equilibrium measure (see e.g., \cite{HM13,MR1487983}), which reduces the proof of Theorem \ref{Thm_NWishart} to solving the associated equilibrium problem. 
For recent developments on concentration for Coulomb gases, 
see \cite{MR3820329,MR3735628} and references therein. We also refer to a recent work \cite{del2019equilibrium} of Criado del Rey and Kuijlaars on the equilibrium problem associated with a certain ellipse. 
\\


For the investigation of the local statistics, in particular when comparing to data in applications, it is important to unfold the spectrum, that is to map the global density to a constant at the point, around which the fluctuations are measured. This applies equally to one- and to two-dimensional spectra. For instance, it is known that for the product of $M$ independent random matrices at fixed $\nu$ such a map is provided by taking the $M$-th root, in our case $M=2$ the square root. This  maps the density \eqref{product} to the circular law. After this map the local statistics equal those of a single Ginibre matrix in the bulk (away from the origin) and at the edge \cite{MR2993423}, cf. \cite{liu2019phase} for a recent rigorous derivation of this fact. Likewise, taking the square root in the Wishart ensemble at $\tau=1$ and $\alpha=0$ maps the local Bessel law at the origin to fluctuate around a locally constant density, sometimes also called the quarter circle law, see \eqref{MP square} below. 

This feature can be conveniently described in an equivalent formulation of the Wishart ensemble  \eqref{Wishart} of the product of two correlated matrices, also called \textit{chiral} version of $X$. More precisely, let
\begin{equation}
\mathcal{D}:=\begin{pmatrix} 
0 & X_1 \\
X_2^*  & 0 
\end{pmatrix}
\end{equation}
be the $(2N+\nu) \times (2N+\nu)$ random \textit{Dirac matrix}. In particular, for the extremal case $\tau=1$, this non-Hermitian two-matrix ensemble $\mathcal{D}$ reduces to the standard Hermitian chiral Gaussian unitary ensemble with $X_1=X_2$, see \cite{JacEdward}. For the details of model $\mathcal{D}$ as well as its physical applications, we refer to \cite[Chapter 32]{akemann2011oxford}, \cite[Section 15.11]{forrester2010log}, \cite{osborn2004universal} and references therein. 

The spectrum of $\mathcal{D}$ consists of the deterministic eigenvalue $0$ with multiplicity $\nu$ and $2N$ complex eigenvalues $\{ \pm \zeta_j \}_{j=1}^N$, which always come in pairs, whence the name chiral. The complex eigenvalues of $\mathcal{D}$ are the solutions of the characteristic equation $0=\det[\zeta-\mathcal{D}]$. For the non-zero eigenvalues this immediately leads to the following relation between the Wishart and Dirac matrix eigenvalues:
\begin{equation}
\label{map}
\zeta_j^2=\widehat{\zeta}_j, \quad j=1,\cdots, N.
\end{equation}
We remark that unlike \cite{akemann2010interpolation}, where the $\zeta_j$ are considered in the complex half-plane to avoid a double covering, we consider both sets of eigenvalues as elements of the full complex plane.
Before discussing the local statistics let us draw some consequences from this mapping for the global statistics of the Dirac matrix $\mathcal{D}$, which at first sight seems trivial.

It is known in the physics literature \cite{MR2806770} that for any fixed $\nu$, (in general, for $\nu=o(N)$) the eigenvalue system $\boldsymbol{\zeta}=\{ \zeta_j \}_{j=1}^N$ tends to be uniformly distributed on the domain given by the ellipse
\begin{equation} \label{S alpha 2 0}
S_0:= \Big\{ \zeta=x+iy: \Big( \frac{x}{1+\tau} \Big)^2+\Big( \frac{y}{1-\tau} \Big)^2 \le 1 \Big\},
\end{equation}
as $N \to \infty$. However, the appearance of the ellipse \eqref{S alpha 2 0} was not
rigorously proved. Moreover, when the parameter $\nu$ is proportional to $N$, the limiting global spectrum in the large-$N$ limit has not been considered. See Fig.~\ref{Spectrum nuN} for samplings of the eigenvalues of the Dirac matrix $\mathcal{D}$ for some values in between $0\leq \tau \leq 1$ at $\alpha_N=0$ and $\alpha_N=1$.

\begin{figure}[t]
	\begin{subfigure}{0.19\textwidth}
		\begin{center}	
			\includegraphics[width=0.8in,height=0.8in]{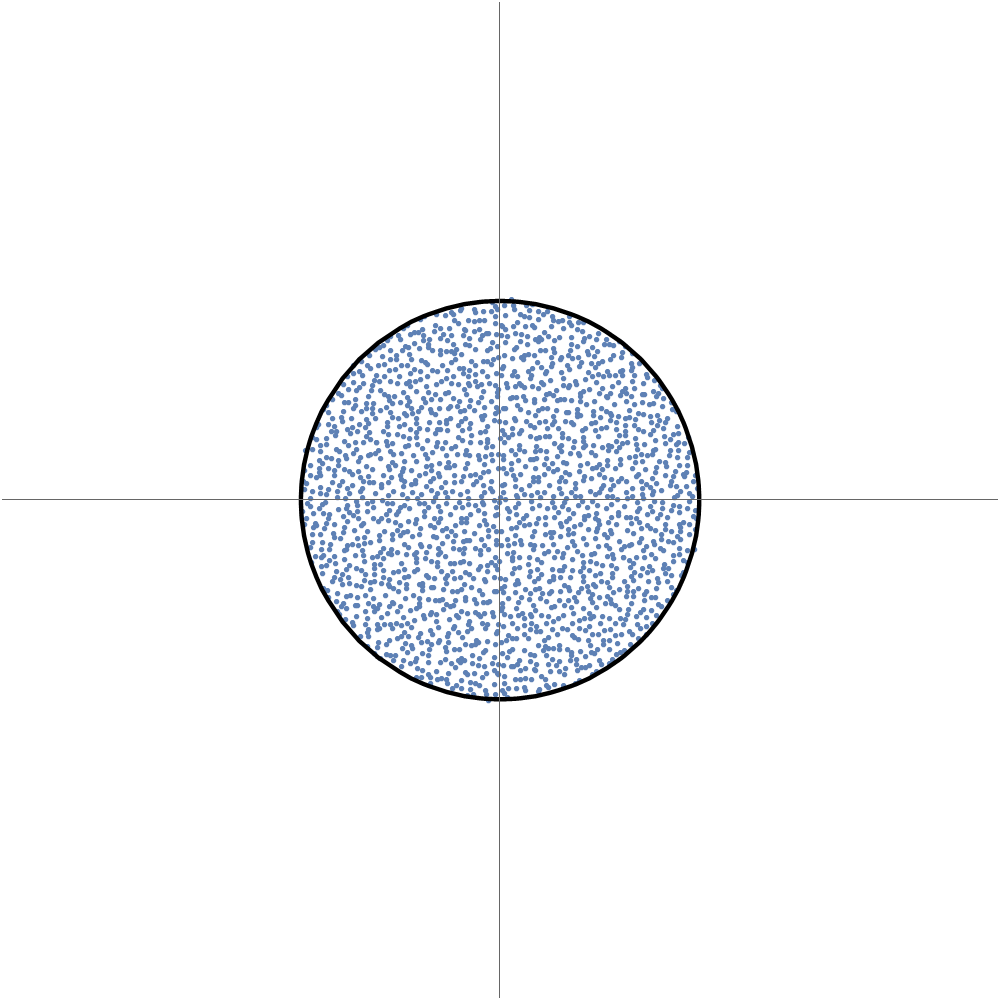}
		\end{center}
		\subcaption{$\tau=0$}
	\end{subfigure}	
	\begin{subfigure}{0.19\textwidth}
		\begin{center}	
			\includegraphics[width=0.8in,height=0.8in]{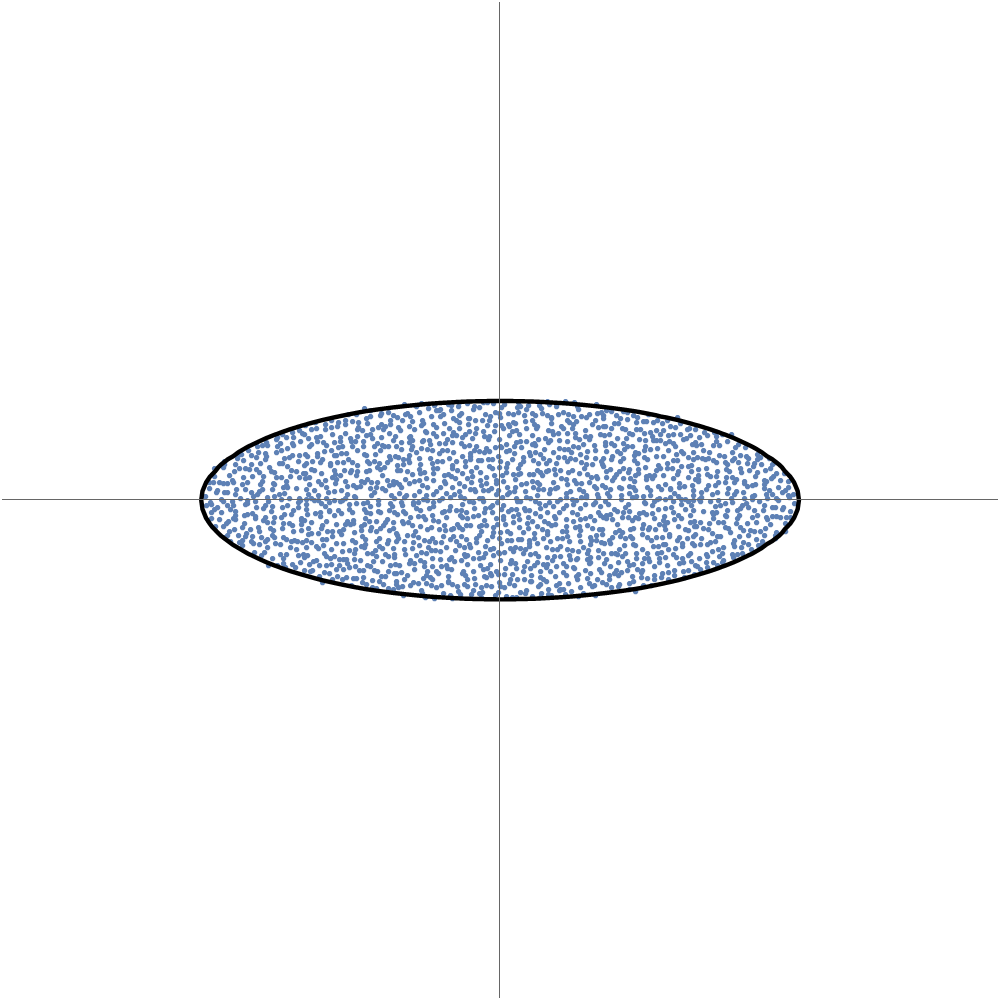}
		\end{center}
		\subcaption{$\tau=0.5$}
	\end{subfigure}	
	\begin{subfigure}[h]{0.19\textwidth}
		\begin{center}
			\includegraphics[width=0.8in,height=0.8in]{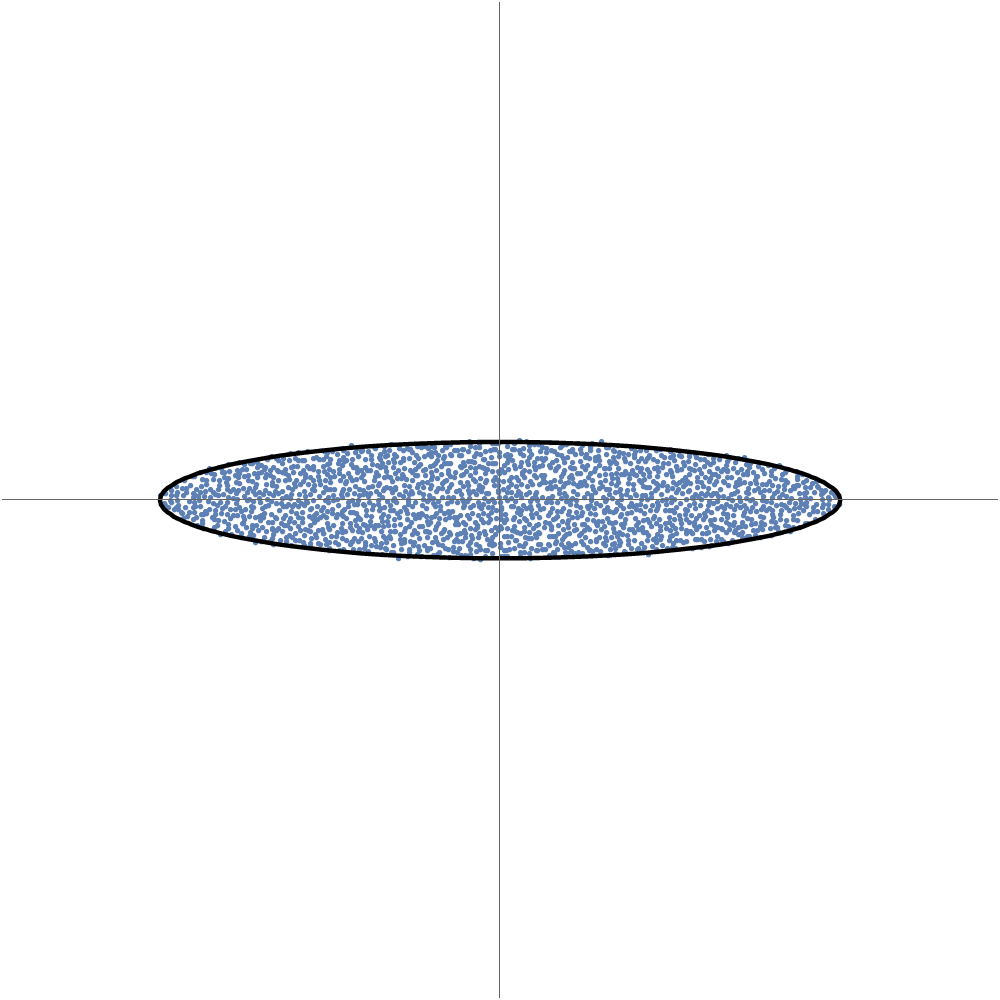}
		\end{center} \subcaption{$\tau=1/\sqrt{2}$}
	\end{subfigure}
	\begin{subfigure}[h]{0.19\textwidth}
		\begin{center}
			\includegraphics[width=0.8in,height=0.8in]{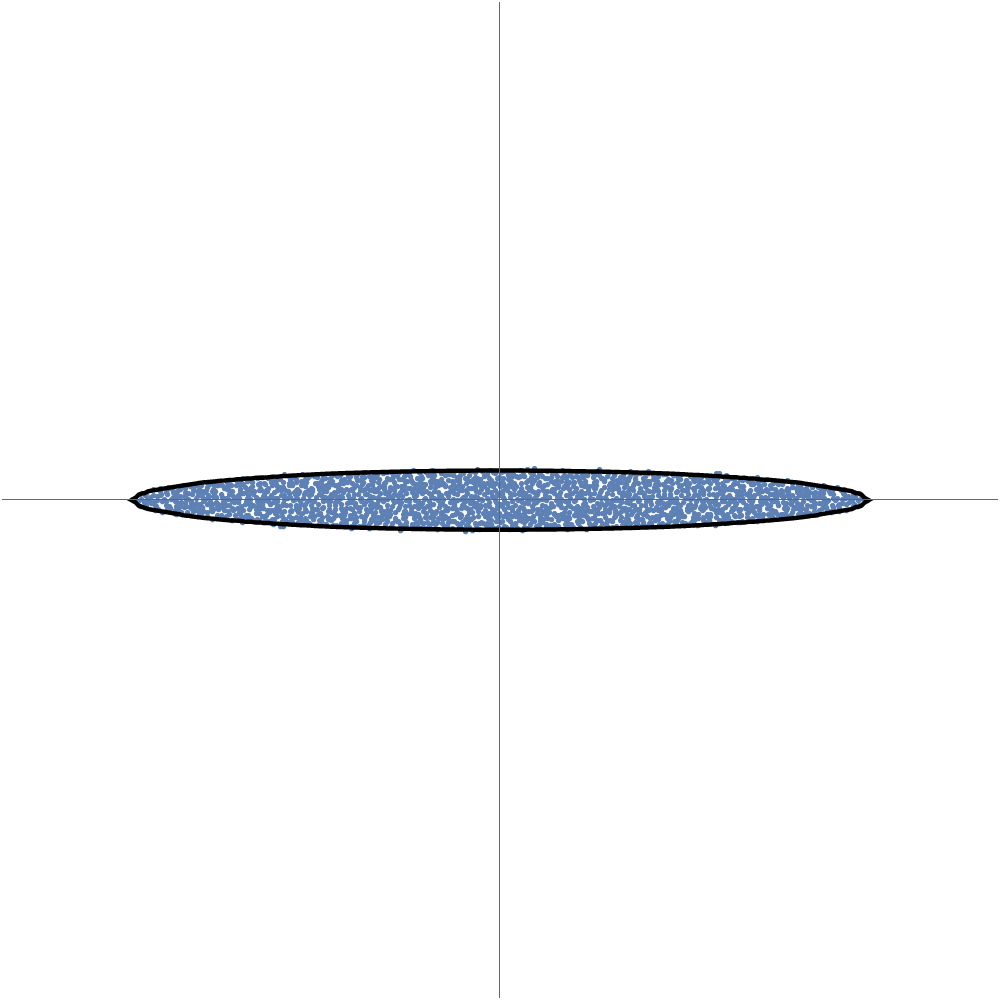}
		\end{center} \subcaption{$\tau=0.85$}
	\end{subfigure}	
	\begin{subfigure}[h]{0.19\textwidth}
		\begin{center}
			\includegraphics[width=0.8in,height=0.8in]{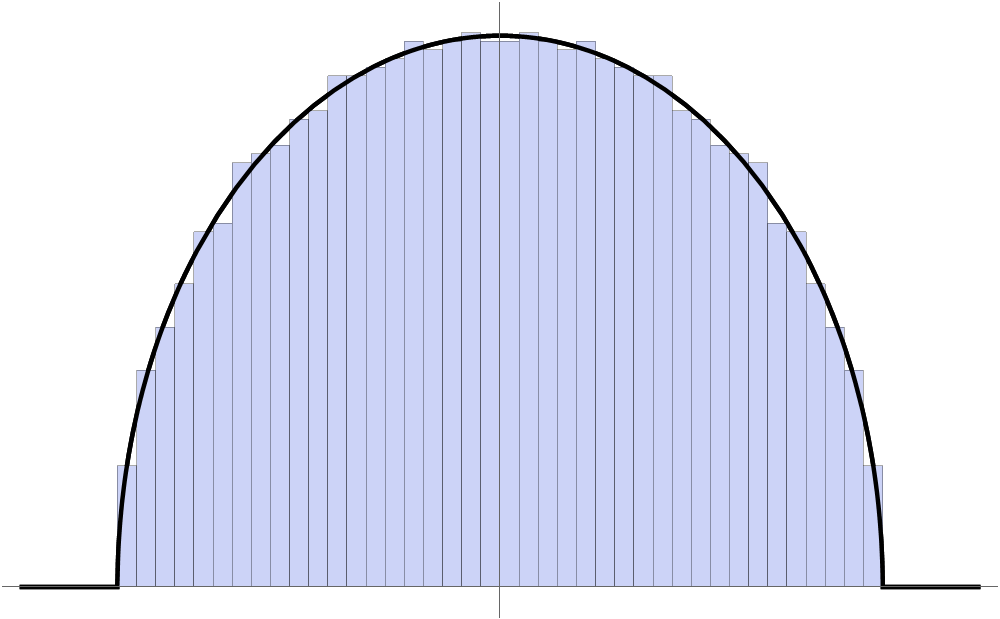}
		\end{center} \subcaption{$\tau=1$}
	\end{subfigure}	
	
	 \vspace{0.5em}
	
	\begin{subfigure}{0.19\textwidth}
		\begin{center}	
			\includegraphics[width=0.8in,height=0.8in]{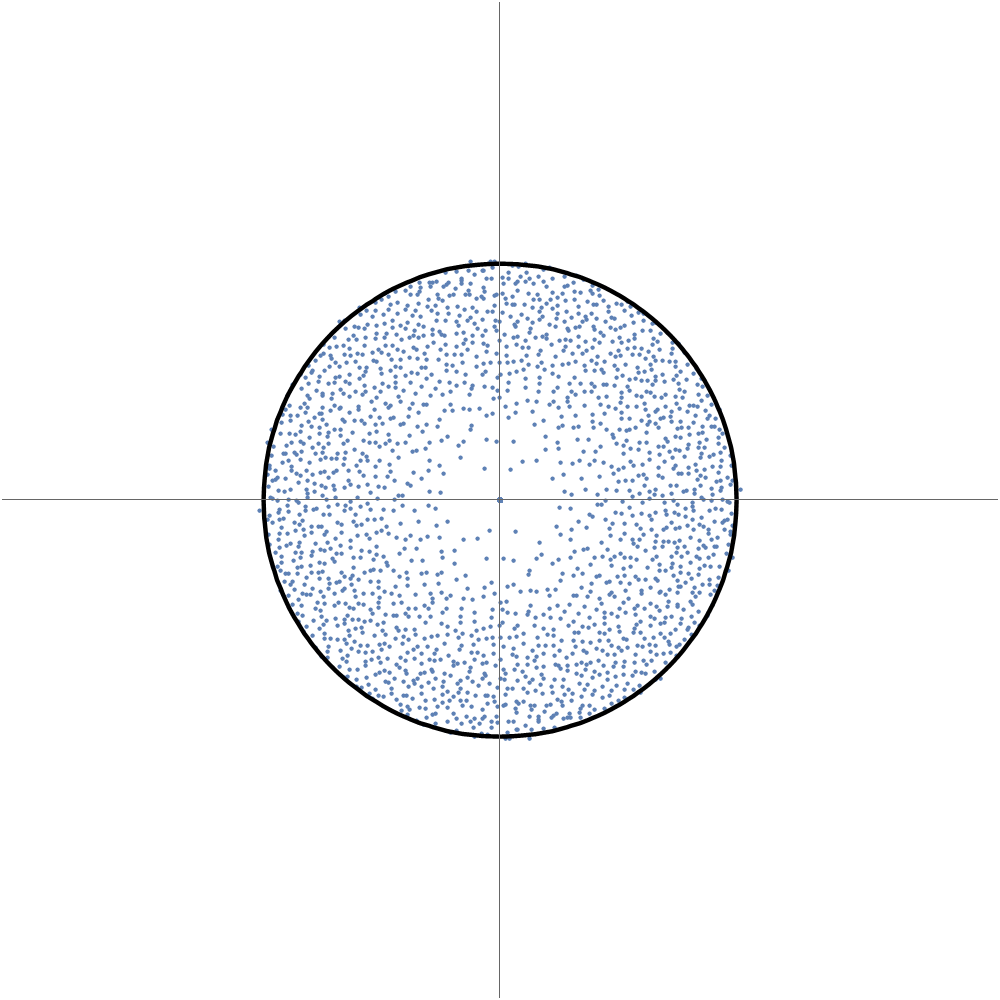}
		\end{center}
		\subcaption{$\tau=0$}
	\end{subfigure}	
	\begin{subfigure}{0.19\textwidth}
		\begin{center}	
			\includegraphics[width=0.8in,height=0.8in]{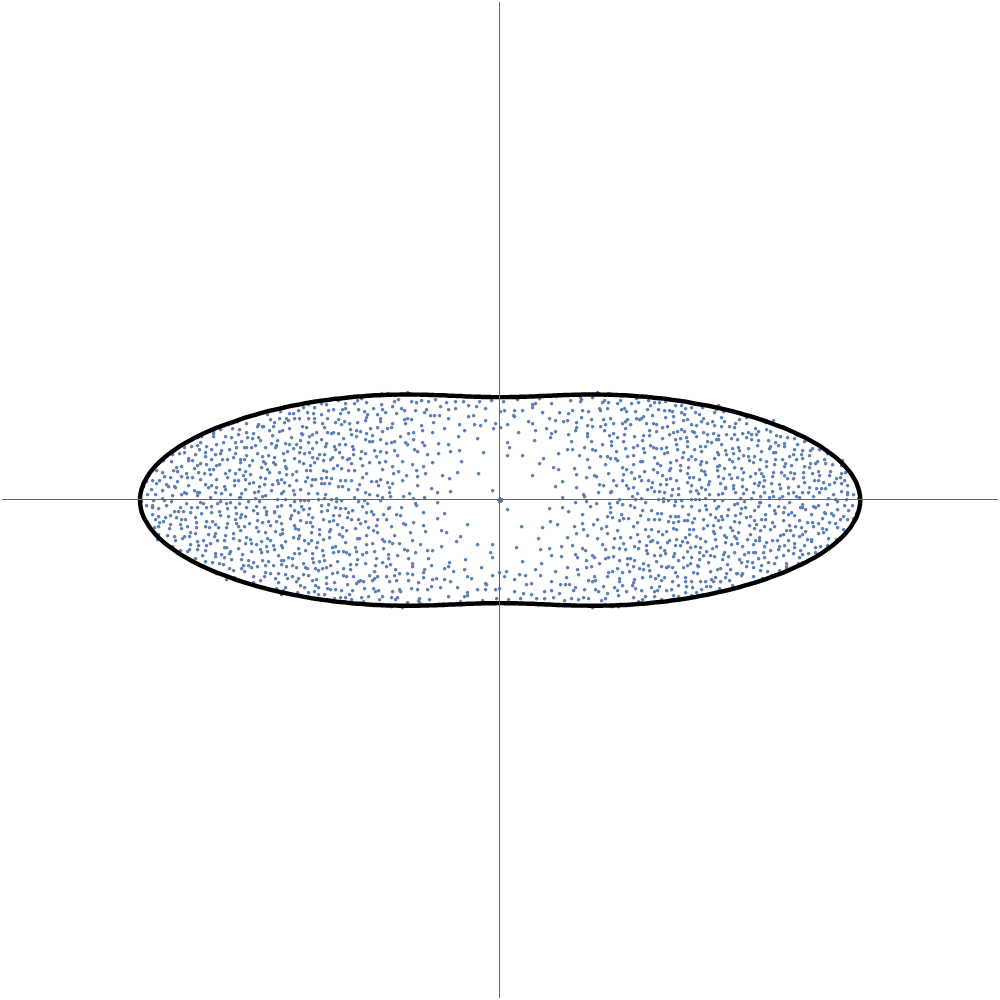}
		\end{center}
		\subcaption{$\tau=0.5$}
	\end{subfigure}	
	\begin{subfigure}[h]{0.19\textwidth}
		\begin{center}
			\includegraphics[width=0.8in,height=0.8in]{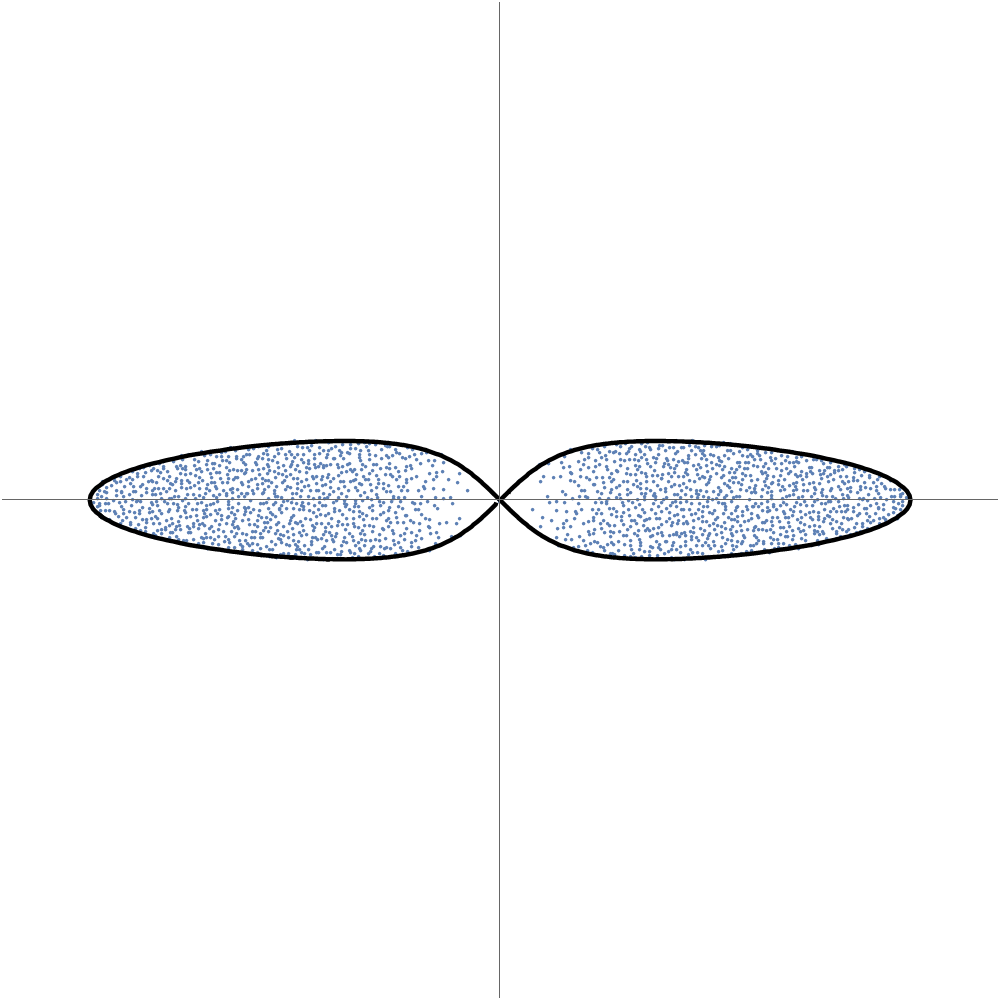}
		\end{center} \subcaption{$\tau=1/\sqrt{2}$}
	\end{subfigure}
	\begin{subfigure}[h]{0.19\textwidth}
		\begin{center}
			\includegraphics[width=0.8in,height=0.8in]{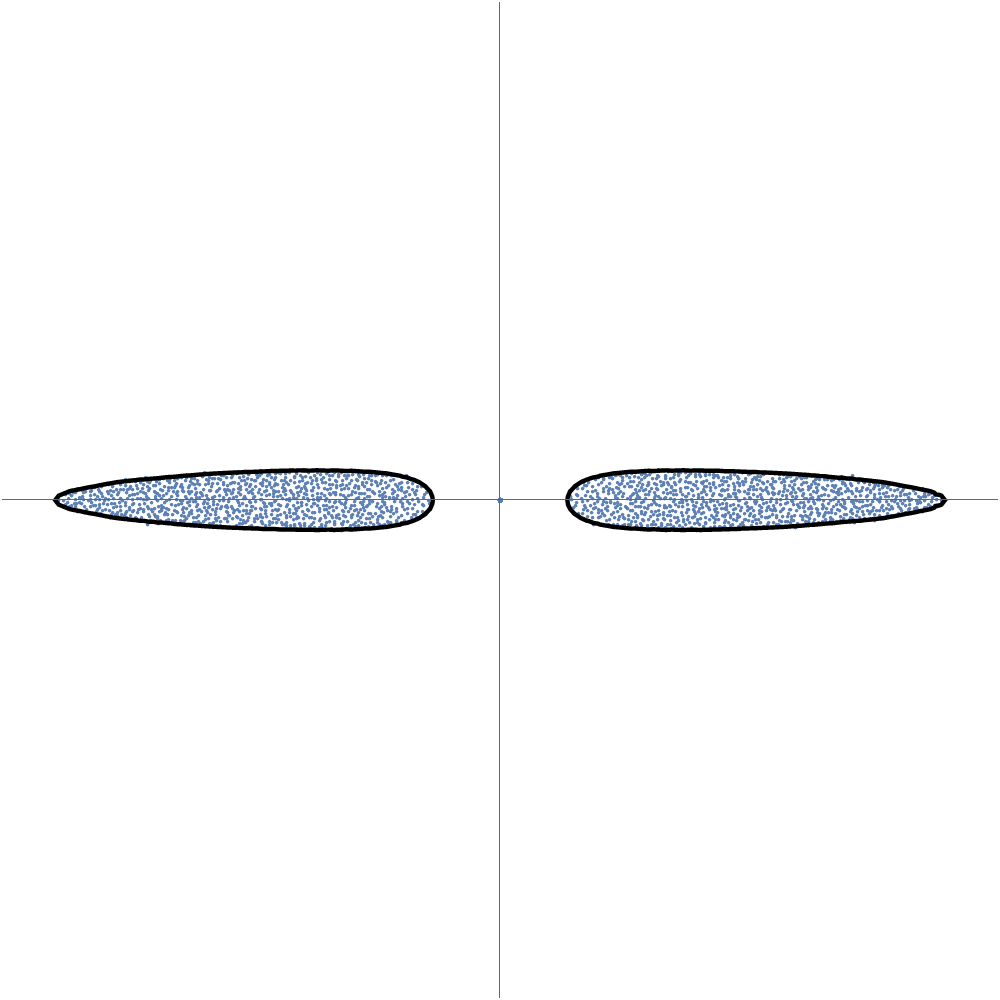}
		\end{center} \subcaption{$\tau=0.85$}
	\end{subfigure}	
	\begin{subfigure}[h]{0.19\textwidth}
		\begin{center}
			\includegraphics[width=0.8in,height=0.8in]{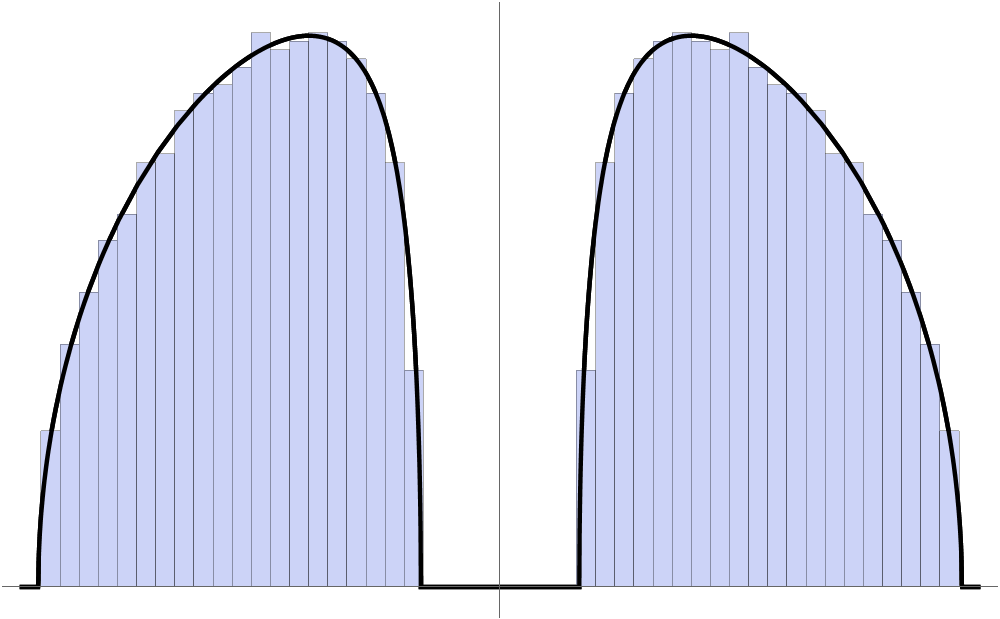}
		\end{center} \subcaption{$\tau=1$}
	\end{subfigure}	
	\caption{ The spectrum of $\mathcal{D}$ (without displaying the zero-modes) where $N=1000$. Here, $\alpha_N=0$ for the figures (A)~--~(E) in the top row, with (A) corresponding to the circular law,  and $\alpha_N=1$ for (F)~--~(J) in the bottom row.
    For $\tau=1$, the plots (E) and (J) display histograms of the real eigenvalues whose distributions follow Marchenko-Pastur law of squared variables \eqref{MP square} with $\alpha=0$ and $\alpha=1$, respectively.	
} \label{Spectrum nuN} 
\end{figure} 

As a simple corollary of Theorem~\ref{Thm_NWishart}, we obtain from the map \eqref{map} an explicit formula for the limiting spectral distribution of $\mathcal{D}$.
\begin{cor} \label{Macro}  
	As $N \to \infty$, the empirical measure $\mu_N$ associated with the Dirac matrix $\mathcal{D}$ weakly converges to 
	\begin{equation}
	\mu:=\frac{\alpha}{2+\alpha} \cdot \delta_0 + \frac{2}{2+\alpha} \cdot \mu_0, 
	\end{equation}
	where $\delta_0$ is the Dirac delta at the origin and the probability measure $\mu_0$ is given by 
	\begin{equation}
	d\mu_0(\zeta):=\frac{1}{1-\tau^2} \frac{|\zeta|^2}{\sqrt{ |\zeta|^4+\alpha^2(1-\tau^2)^2/4 }} \cdot  \mathbbm{1}_{S_\alpha}(\zeta) \, dA(\zeta).
	\end{equation}
	Here, the droplet $S_\alpha$ is enclosed by the quartic curve with equation
	\begin{equation} \label{S alpha 2}
	(x^2+y^2)^2+  \frac{16\tau^2}{(1-\tau^2)^2} x^2y^2-2\tau(2+\alpha)(x^2-y^2) =( 1+\alpha-\tau^2 ) (1-(1+\alpha)\tau^2 ).
	\end{equation}
\end{cor}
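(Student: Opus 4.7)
The plan is to exploit the factorisation
\begin{equation}
\det(\zeta - \mathcal{D}) = \zeta^{\nu}\det(\zeta^2 - X),
\end{equation}
which forces the spectrum of $\mathcal{D}$ to consist of $\nu$ deterministic zero eigenvalues together with $2N$ non-zero eigenvalues coming in $\pm$-pairs $\{\pm\zeta_j\}_{j=1}^N$ with $\zeta_j^2=\widehat\zeta_j$. Splitting
\begin{equation}
\mu_N = \frac{\nu}{2N+\nu}\delta_0 + \frac{2N}{2N+\nu}\cdot \frac{1}{2N}\sum_{j=1}^N(\delta_{\zeta_j}+\delta_{-\zeta_j}),
\end{equation}
the two weight prefactors converge to $\alpha/(2+\alpha)$ and $2/(2+\alpha)$, which already produces the two-component structure of $\mu$.

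To identify $\mu_0$, I would test the second piece against an arbitrary bounded continuous $g$ and replace $g$ by its even part $\tilde g(\zeta)=\tfrac12(g(\zeta)+g(-\zeta))$; this substitution is lossless because the empirical measure $\sum_j(\delta_{\zeta_j}+\delta_{-\zeta_j})$ is $\pm$-symmetric. Being even, $\tilde g$ factors through the squaring map as $\tilde g(\zeta)=h(\zeta^2)$ for a unique bounded continuous $h$, with continuity of $h$ at the origin following from continuity of $\tilde g$ at $0$. Hence
\begin{equation}
\frac{1}{N}\sum_{j=1}^N \tilde g(\zeta_j) = \frac{1}{N}\sum_{j=1}^N h(\widehat\zeta_j) = \int h\,d\widehat\mu_N \longrightarrow \int h\,d\widehat\mu
\end{equation}
by Theorem~\ref{Thm_NWishart}.

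The third step is a change of variables $w=\zeta^2$ with Jacobian $dA(w) = 4|\zeta|^2\,dA(\zeta)$ on either sheet. Integrating over a fundamental domain and doubling, and then using evenness to pass $\tilde g\mapsto g$, rewrites $\int h\,d\widehat\mu$ as $\int g\,d\mu_0$ with
\begin{equation}
d\mu_0(\zeta) = \frac{2|\zeta|^2}{(1-\tau^2)\sqrt{4|\zeta|^4+(1-\tau^2)^2\alpha^2}}\,\mathbbm{1}_{S_\alpha}(\zeta)\,dA(\zeta),
\end{equation}
which reorganises into the announced density via $\sqrt{4|\zeta|^4+(1-\tau^2)^2\alpha^2}=2\sqrt{|\zeta|^4+(1-\tau^2)^2\alpha^2/4}$. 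That $\mu_0$ is a probability measure is automatic, since the same substitution pushes it forward to $\widehat\mu$.

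Finally, the droplet $S_\alpha=\{\zeta\in\mathbb C:\zeta^2\in\widehat S_\alpha\}$ is computed by substituting $u=x^2-y^2$, $v=2xy$ into the boundary ellipse \eqref{hat S alpha}, using $u^2+v^2=(x^2+y^2)^2$ together with $(1+\tau^2)^2-(1-\tau^2)^2=4\tau^2$ to collect the quadratic terms, and reducing the constant via the factorisation
\begin{equation}
(1+\tau^2)^2(1+\alpha)-\tau^2(2+\alpha)^2 = (1+\alpha-\tau^2)\bigl(1-(1+\alpha)\tau^2\bigr).
\end{equation}
This produces exactly the quartic \eqref{S alpha 2}. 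The entire argument reduces to the push-forward of $\widehat\mu$ under $\zeta\mapsto\zeta^2$ plus bookkeeping for the $\nu$ zero modes; the only mildly delicate step is the algebraic clean-up of the quartic, which is routine but unavoidable.
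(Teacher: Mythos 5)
Your proposal is correct. The underlying idea is the one the paper itself flags in the introduction (insert $\widehat\zeta=\zeta^2$ into the density \eqref{NWishart_density} with the Jacobian and into the support \eqref{hat S alpha}), but your execution differs from the paper's written proof in Section~\ref{Section_global}: there the authors work at the level of the determinantal structure, using the planar Laguerre orthogonality to get the exact finite-$N$ kernel identity $\bfK_{N}(\zeta,\eta)=2|\zeta\eta|\,\widehat{\bfK}_{N}(\zeta^2,\eta^2)$ and then passing to the limit of the one-point function $\bfR_{N,1}/N\to\tfrac12\Delta\widetilde V_\alpha\cdot\mathbbm{1}_{S_\alpha}$. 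You instead argue purely measure-theoretically: the determinant factorisation $\det(\zeta-\mathcal D)=\zeta^{\nu}\det(\zeta^2-X)$ gives the $\nu$ zero modes and the $\pm$-paired nonzero eigenvalues, the weights $\nu/(2N+\nu)\to\alpha/(2+\alpha)$ and $2N/(2N+\nu)\to 2/(2+\alpha)$ give the two-component structure, and the nonzero part is the pull-back of $\widehat\mu_N$ under squaring, handled by symmetrising the test function, factoring the even part through $\zeta\mapsto\zeta^2$ (continuity of $h$ on the quotient is fine), invoking Theorem~\ref{Thm_NWishart}, and changing variables with Jacobian $4|\zeta|^2$; your algebra for the density (the factor $2|\zeta|^2$ after the $\tfrac12$ from doubling) and for the quartic, including the identity $(1+\tau^2)^2(1+\alpha)-\tau^2(2+\alpha)^2=(1+\alpha-\tau^2)(1-(1+\alpha)\tau^2)$, checks out. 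Your route is more elementary in that it needs no determinantal or orthogonality input, only the eigenvalue correspondence \eqref{map} and Theorem~\ref{Thm_NWishart} as stated (weak convergence in expectation, which your pointwise identity respects); the paper's kernel-based route yields somewhat more, namely convergence of the rescaled one-point density itself, which it then reuses for the local analysis.
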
 
A simple proof of Corollary \ref{Macro} consists of inserting $\widehat{\zeta}=\zeta^2$ into the density \eqref{NWishart_density} including the Jacobian, and into the support \eqref{hat S alpha}.

Note that for $\alpha=0$ the limiting density is flat - as required for unfolding - and $\partial S_\alpha$ is given by the ellipse \eqref{S alpha 2 0} as predicted in the physics literature \cite{MR2806770}. This generalises the elliptic law to the ensemble $\mathcal{D}$ for $\alpha=0$.
On the other hand, for $\alpha>0$, one can easily observe from the formula \eqref{S alpha 2} that the topology of the droplet $S_\alpha$ reveals a transition when $\tau$ passes through the critical value $\tau_c$.
More precisely, for $\alpha>0$ and $\tau \in [0,\tau_c)$ the droplet $S_\alpha$ is a simply connected domain, whereas for $\tau \in (\tau_c,1]$ it consists of two connected components, see Fig.~\ref{Spectrum nuN}. 
We emphasise that at the critical regime $\tau=\tau_c$, the droplet $S_\alpha$ is of lemniscate type and refer the reader to  \cite{MR4030288,MR3280250,balogh2015equilibrium,MR3365300,DS20} for some recent studies on the planar ensembles of similar appearance, containing singular boundary points. Such transitions have been observed earlier in the physics literature, see \cite{Haake91,stephanov1996random}. Consequently, the seemingly trivial map \eqref{map} leads to an intricate behaviour for $\alpha>0$, that calls for an investigation of the local statistics in the vicinity of the multi-critical point at the origin for $\tau=\tau_c$. This is the subject of Theorem \ref{Micro} below.

Notice that in the Hermitian regime $\tau=1$, the limiting global spectral distribution follows the Marchenko-Pastur law of squared variables:
\begin{equation} \label{MP square}
\frac{1}{\pi}\frac{\sqrt{(\lambda_{+}-x^2) (x^2-\lambda_{-})  }}{|x|}\cdot  \mathbbm{1}_{  [-\sqrt{\lambda_{+}},-\sqrt{\lambda_{-}} ] \cup [\sqrt{\lambda_{-}},\sqrt{\lambda_{+}} ] }\,(x), 
\end{equation}  
see e.g., \cite[Proposition 3.4.1]{forrester2010log}. In particular for the case $\alpha=0$, with $\lambda_-=0$ and $\lambda_+=2$, it reduces to the semi-circle distribution (or quarter circle when restricted to $\mathbb{R}_+$). 

We remark that similar to the global universality of the circular law \cite{MR1428519, MR2663633,MR2722794} or the elliptic law \cite{MR948613} in general, we expect that the limiting law in Theorem~\ref{Thm_NWishart} universally appears when we replace the complex Gaussian entries of $P$ and $Q$ by general i.i.d. random variables. We emphasise that it is required to keep the correlation between $X_1$ and $X_2$ as in \eqref{X12def} when choosing Wigner matrices for $P$ and $Q$.\\

In the second part of this article we investigate the local statistics of the ensemble $\mathcal{D}$ when the droplet $S_\alpha$ splits into two connected components. Since the case $\alpha=0$ does not reveal such a multi-critical behaviour, let us assume $\alpha>0$  in the sequel and define the rescaled point process $\boldsymbol{z}=\{ z_j \}_{j=1}^N$ at the origin as 
\begin{equation}\label{microlim}
z_j=(N\delta)^{1/4} \cdot \zeta_j, \quad \delta=\frac{1}{(1-\tau^2)^2 \alpha}.  
\end{equation}
Here, the rescaling order is chosen as in \cite{ameur2018random,MR3742808,chau1998structure} so that the mean eigenvalue spacing of the rescaled process $\boldsymbol{z}$ is of order $O(1)$. Recall that the $k$-point correlation function $R_{N,k}(z_1,\cdots,z_k)$ of the system $\boldsymbol{z}$ is given by
$$
R_{N,k}(z_1,\cdots,z_k)
:= \lim_{\eps \downarrow 0} \frac{ \mathbb{P}( \, \exists \text{ at least one particle in } \mathbb{D}(z_j,\eps), j=1,\cdots,k ) }{\eps^{2k}},
$$
where $\mathbb{D}(z_j,\eps)$ is a disc with centre $z_j$ and radius $\eps.$ See also \eqref{ bfR_{N,k} } and \eqref{ R_{N,k} } for a more standard definition of $R_{N,k}$.

Our second main result deals with the scaling limit of the rescaled process $\boldsymbol{z}$. To our knowledge, Theorem~\ref{Micro} below is the first explicit result for the local statistics at the singular boundary of lemniscate type. On the other hand, for the local statistics at a multi-critical point of unitary random Hermitian matrix ensembles, see \cite{MR1949138,MR2254445, MR2434886} and references therein. 
\begin{thm} \label{Micro}
	Given $\alpha > 0$ and $\tau=\tau_c$, for each $k \in \mathbb{N}$, we have
\begin{equation}
\lim_{N \to \infty} R_{N,k}(z_1,\cdots, z_k)=R_{k}(z_1,\cdots, z_k)= \det \Big[ K(z_j, z_l) \Big]_{ 1 \le j,l \le k }
\end{equation}
uniformly for $z_1,\cdots, z_k$ in compact subsets of $\C$, where 
\begin{equation} \label{K}
K(z,w)=|zw| \, e^{ z^2\bar{w}^2 -|z|^4/2-|w|^4/2  } \erfc\Big( -\dfrac{z^2+\bar{w}^2}{\sqrt{2}}  \Big).
\end{equation}
\end{thm}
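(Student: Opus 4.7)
Since the eigenvalues $\{\zeta_j\}_{j=1}^N$ of $\mathcal{D}$ form a determinantal point process with some correlation kernel $K_N$, one has $R_{N,k}(z_1,\dots,z_k)=\det[K_N(z_j,z_l)]_{1\le j,l\le k}$. It therefore suffices to show that, after the rescaling $z=(N\delta)^{1/4}\zeta$, the kernel $K_N$ converges (up to a gauge phase) to $K$ uniformly on compact subsets of $\C^2$. The plan is to (i) write $K_N$ via the planar orthogonal polynomial expansion; (ii) identify the correct local scale using Theorem \ref{Thm_NWishart} together with the squared map $\widehat\zeta=\zeta^2$; and (iii) perform the saddle point asymptotic analysis of the resulting sum involving generalised Laguerre polynomials.

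\textbf{Form of the kernel and heuristic from the squared map.} By a computation as in \cite{osborn2004universal}, the planar orthogonal polynomials associated with the non-Hermitian Wishart weight are generalised Laguerre polynomials $L_n^\nu$ with explicitly known norms, and the Wishart correlation kernel takes the Christoffel--Darboux form
\begin{equation}
\widehat K_N(\widehat z,\widehat w)=\sqrt{w(\widehat z)\,w(\widehat w)}\sum_{n=0}^{N-1}\frac{L_n^\nu(c_N\widehat z)\,\overline{L_n^\nu(c_N\widehat w)}}{h_n},
\end{equation}
for an explicit weight $w(\widehat\zeta)\propto|\widehat\zeta|^\nu K_\nu\!\bigl(\tfrac{2N|\widehat\zeta|}{1-\tau^2}\bigr)\exp\!\bigl(\tfrac{2N\tau}{1-\tau^2}\re\widehat\zeta\bigr)$ and explicit constants $c_N,h_n$. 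By the remark after Theorem \ref{Thm_NWishart}, at $\tau=\tau_c$ the origin lies on the boundary ellipse $\partial\widehat S_\alpha$ with outward normal in the $-\hat x$ direction, and a direct substitution into \eqref{NWishart_density} shows that the macroscopic Wishart density at $0$ equals exactly $\delta$. The Dirac rescaling $z=(N\delta)^{1/4}\zeta$ is therefore the image under $\widehat\zeta=\zeta^2$ of the Wishart edge rescaling $u=\sqrt{N\delta}\,\widehat\zeta$. At the heuristic level, convergence of $\widehat K_N$ at the origin to the universal Ginibre edge kernel $\propto e^{u\bar v-(|u|^2+|v|^2)/2}\,\tfrac{1}{2}\erfc\bigl(-(u+\bar v)/\sqrt{2}\bigr)$, combined with the Jacobian factor $|2z|\cdot|2w|$ from the change of variables $u=z^2,\,v=w^2$ and an appropriate gauge, produces exactly the prefactor $|zw|$, the exponent $z^2\bar w^2-|z|^4/2-|w|^4/2$, and the $\erfc$-argument $-(z^2+\bar w^2)/\sqrt{2}$ appearing in \eqref{K}.

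\textbf{Saddle point analysis and main obstacle.} To make this rigorous, one inserts an integral representation for $L_n^\nu$ so that the defining sum becomes a (double) contour integral with integrand of the form $\exp\bigl(N\Phi_n(\zeta,\bar w)\bigr)$ times slowly varying factors. After the Dirac rescaling, the sum $\sum_{n=0}^{N-1}$ is split into a bulk range $n/N\le 1-\epsilon$, whose contribution is exponentially suppressed at the origin, and an edge range $n=N+O(\sqrt{N})$, parametrised by $\xi=(n-N)/\sqrt{N}$. At $\tau=\tau_c$ the saddle structure is degenerate: two saddles in $\zeta$ coalesce with the summation endpoint $n=N$---the analytic signature of multi-criticality---so an ordinary steepest-descent analysis fails. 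A uniform Gaussian expansion of $\Phi_n$ in both the saddle variable and in $\xi$ reproduces the factor $e^{z^2\bar w^2-|z|^4/2-|w|^4/2}$, and the one-sided Riemann sum in $\xi\in(-\infty,0]$ converges to precisely the Gaussian integral that defines $\erfc$. The main technical difficulty is executing this uniform stationary-phase analysis with explicit control on the error terms, uniformly for $z,w$ in compact sets of $\C$, while simultaneously handling the integral representation of the Bessel factor $K_\nu$ whose order $\nu=\alpha N$ grows linearly in $N$. Once uniform convergence $K_N\to K$ on compacta is obtained, convergence of all $k$-point correlation functions follows immediately from the determinantal formula.
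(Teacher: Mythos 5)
Your setup is correct and matches the paper's reduction: the eigenvalues form a determinantal process with Laguerre-polynomial kernel, the Dirac kernel is $2|zw|$ times the Wishart kernel in squared variables (cf.\ \eqref{R_N V,Q rel}), and the result amounts to uniform convergence of the rescaled kernel to the boundary Ginibre kernel up to a cocycle (Theorem~\ref{Micro standard}). However, everything after that is asserted rather than proved, and what you defer --- ``executing this uniform stationary-phase analysis with explicit control on the error terms \dots while simultaneously handling the integral representation of the Bessel factor $K_\nu$ whose order $\nu=\alpha N$ grows linearly in $N$'' --- is precisely the entire content of the proof. You propose to analyse the sum over the polynomial degree $n$ directly, via double contour integrals, a split into a ``bulk'' range and an edge range $n=N+O(\sqrt N)$, and a uniform expansion near a degenerate saddle that is claimed to coalesce with the summation endpoint. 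None of these steps is substantiated: you do not exhibit the phase function, verify the claimed saddle degeneracy at $\tau=\tau_c$, justify the exponential suppression of the bulk range at the rescaled origin, show that the Riemann-sum limit and the Gaussian prefactor $e^{z^2\bar w^2-|z|^4/2-|w|^4/2}$ actually emerge with uniform error control on compacta, or treat the large-order asymptotics of $K_\nu$ (and the cocycle bookkeeping). As it stands the proposal is a plausible program, not a proof.

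It is worth noting how the paper avoids the double-contour analysis you sketch, which it explicitly describes as substantially harder. The key device is the addition formula \eqref{Lag addition}, which converts the product $L_j^\nu(az)\overline{L_j^\nu(aw)}$ into single Laguerre polynomials in $z+\bar w$; the kernel then takes the form \eqref{K_N inner sum}, a sum over $k$ of $\nu^k(z\bar w)^k/(k!\,\Gamma(k+\nu+1))$ times $\mathcal I_k(z+\bar w)$, where each $\mathcal I_k$ is a \emph{single} contour integral \eqref{RN 1_k}. The $\erfc$ factor then comes not from a one-sided Riemann sum over degrees but from a saddle point of $h(s)=\alpha_N\log\frac{1-s}{1-\tau^2}+\log\frac{s}{\tau^2}$ coinciding with the simple pole at $s=\tau^2$ exactly when $\tau=\tau_c$ (Lemma~\ref{Lem RN 1 asymp}); the Gaussian factor $G(z^2,w^2)$ comes from the remaining $k$-sum, which is an incomplete Bessel series handled by large-order asymptotics of $I_\nu,K_\nu$ (Lemma~\ref{Lem KI Ginibre}); and the truncation errors are controlled by Gamma-function bounds, the uniform bound $0\le\mathcal I_k\le 1$ on the negative axis (Lemma~\ref{Lem RN1 bdd}), and a Poisson tail estimate. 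If you want to pursue your route, you would need to supply, at minimum, the analogue of these three ingredients --- a uniform (pole/endpoint-coalescence) expansion valid in the multi-critical scaling, the $\nu\to\infty$ Bessel asymptotics for the weight, and quantitative tail bounds for the degree sum --- none of which is present in the proposal.
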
 
An illustration  of the rescaled local density $R_{N,1}$ at finite $N$ is displayed in Figure \ref{R QCD} and a comparison to its asymptotic limit \eqref{K} is given in Figure \ref{R_V conv}, taking cuts in the $x$- and $y$-directions of the 2D density.

In Section~\ref{Section_local}, we present both heuristic arguments for the appearance of the limiting correlation kernel~\eqref{K} and a rigorous proof based on the steepest descent method. In particular we use here that the complex eigenvalues of $\mathcal{D}$ form a determinantal point process with explicitly known kernel.

We notice that the kernel in \eqref{K} coincides with the kernel at the edge of the Ginibre ensemble in terms of squared variables. 
We also find that for $\tau<\tau_c$ the origin is an inner point of the spectrum and displays bulk statistics of the Ginibre ensemble (in squared variables), whereas for $\tau>\tau_c$ the correlations at the origin vanish, see \eqref{K_N V lim} and Theorem~\ref{Micro standard}. 
This perhaps surprising finding can be intuitively explained as follows. In the 
Wishart picture where we consider complex eigenvalues of the product $X$, in Fig.~\ref{Fig. NWisart} 
apparently the point $\tau_c$ is not special at all. Consequently, we expect local bulk Ginibre statistics everywhere inside the support (away from the origin for $\alpha=0$, and after unfolding) and local edge Ginibre statistics everywhere along the boundary of the support. So far this has only been shown for $\tau=\alpha=0$ \cite{MR2993423,liu2019phase}.

\begin{figure}[t]
	\begin{subfigure}{0.32\textwidth}
		\begin{center}	
			\includegraphics[width=1.3in,height=1.05in]{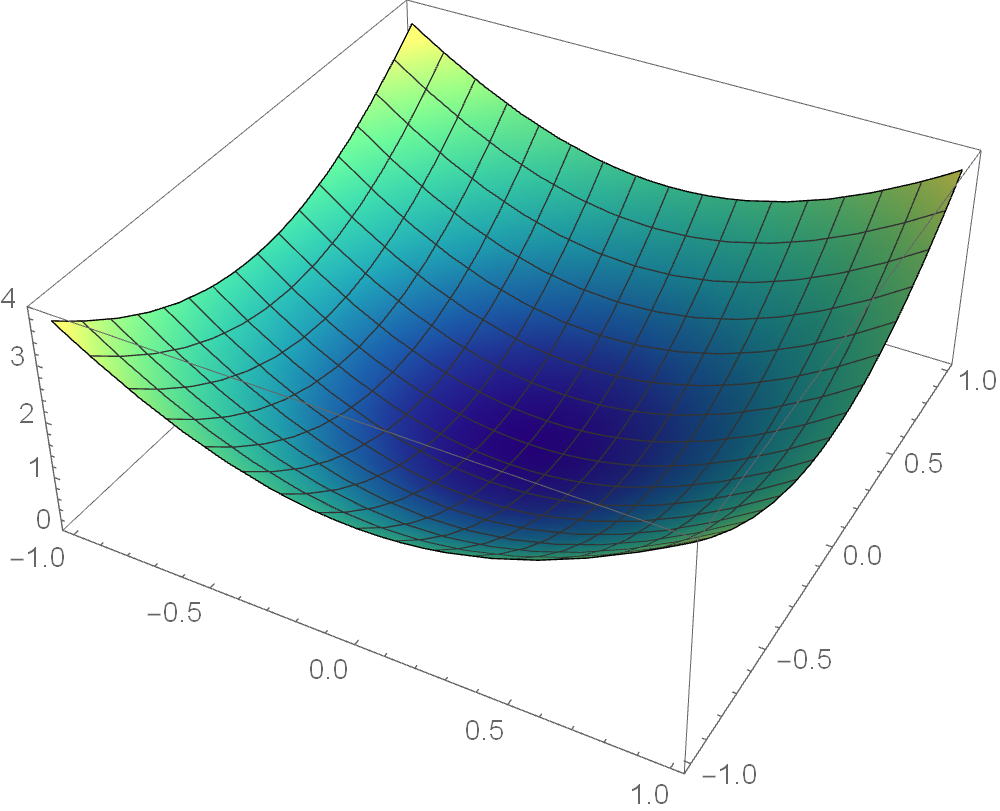}
		\end{center}
		\subcaption{$\tau=0.5$}
	\end{subfigure}	
	\begin{subfigure}[h]{0.32\textwidth}
		\begin{center}
			\includegraphics[width=1.3in,height=1.05in]{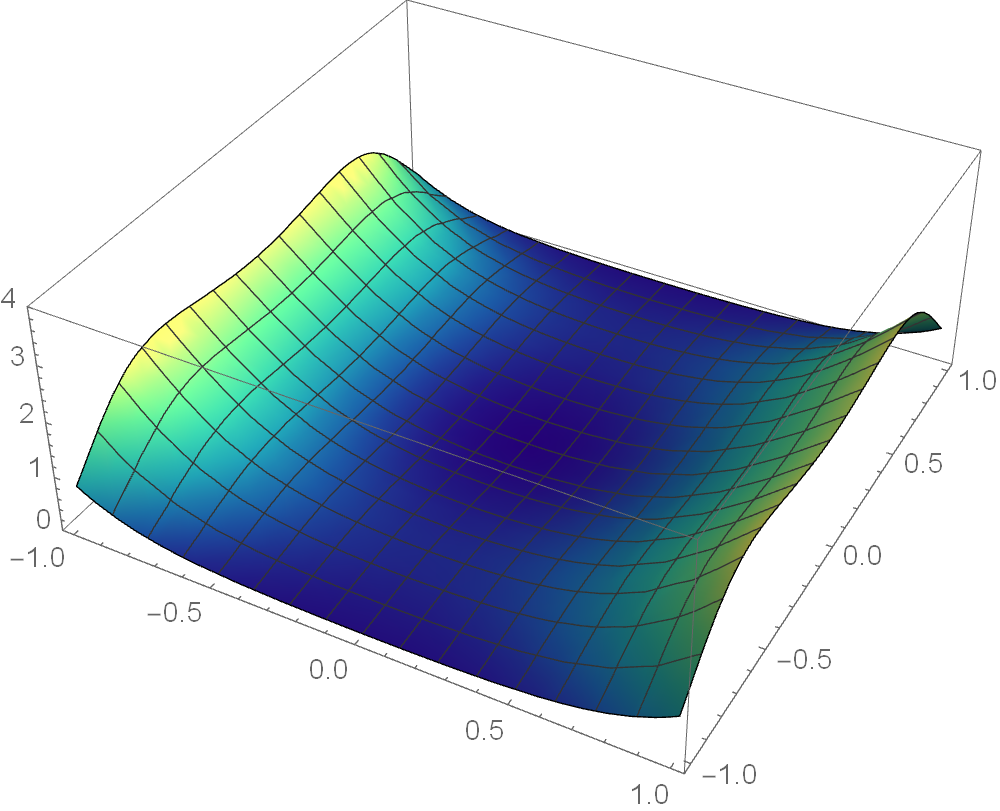}
		\end{center} \subcaption{$\tau=1/\sqrt{2}$}
	\end{subfigure}
	\begin{subfigure}[h]{0.32\textwidth}
		\begin{center}
			\includegraphics[width=1.3in,height=1.05in]{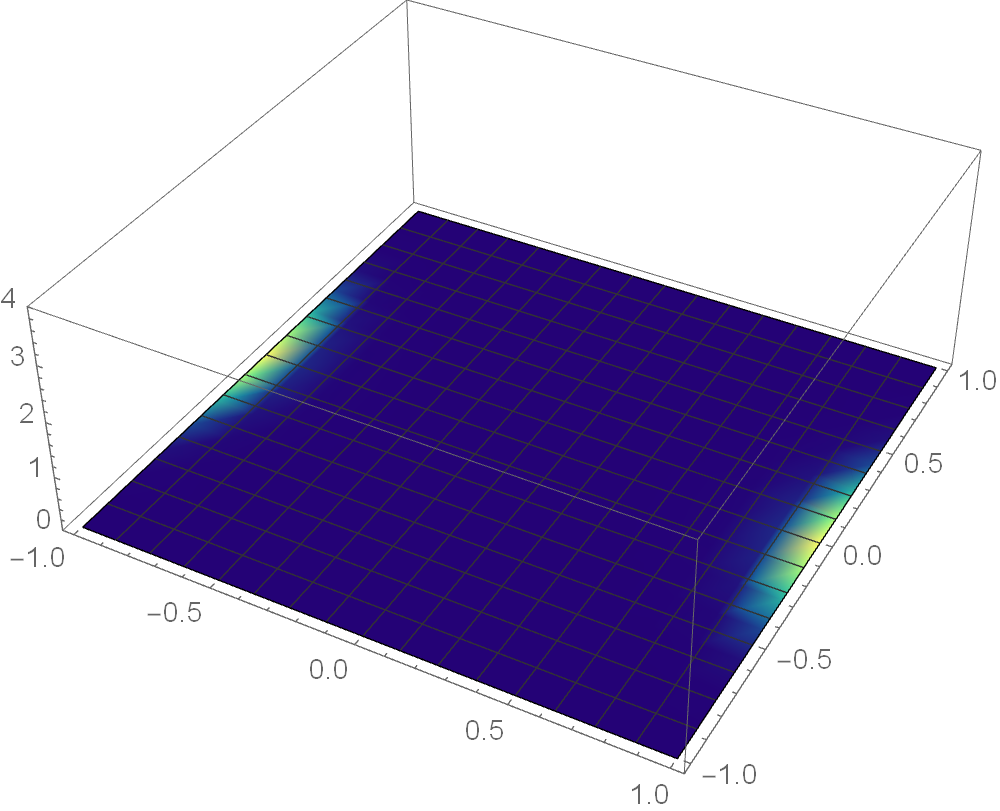}
		\end{center} \subcaption{$\tau=0.85$}
	\end{subfigure}
	\caption{ For illustration we show the graphs of the densities $R_{N,1}$ rescaled for taking the microscopic limit \eqref{microlim}, with $N=100$ and $\alpha_N = 1$. See \eqref{R_{N,1}} for the expression of $R_{N,1}$ in terms of the orthogonal Laguerre polynomials in the complex plane.} \label{R QCD}
\end{figure}

\begin{figure}[t]
	\begin{subfigure}{0.32\textwidth}
		\begin{center}	
			\includegraphics[width=1.22in,height=0.8in]{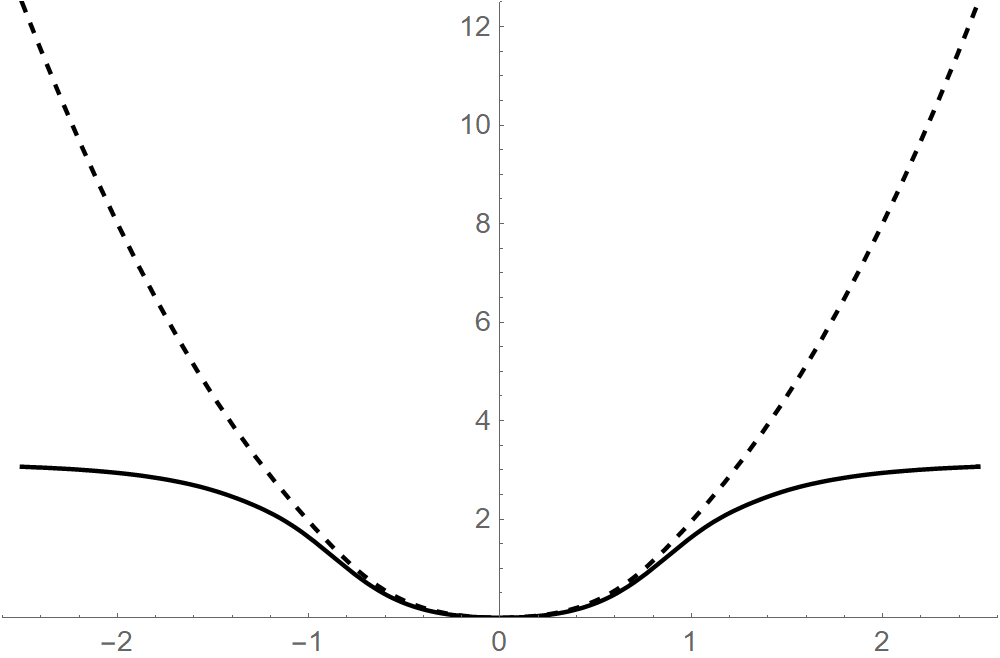}
		\end{center}
		\subcaption{$N=10$}
	\end{subfigure}	
	\begin{subfigure}[h]{0.32\textwidth}
		\begin{center}
			\includegraphics[width=1.22in,height=0.8in]{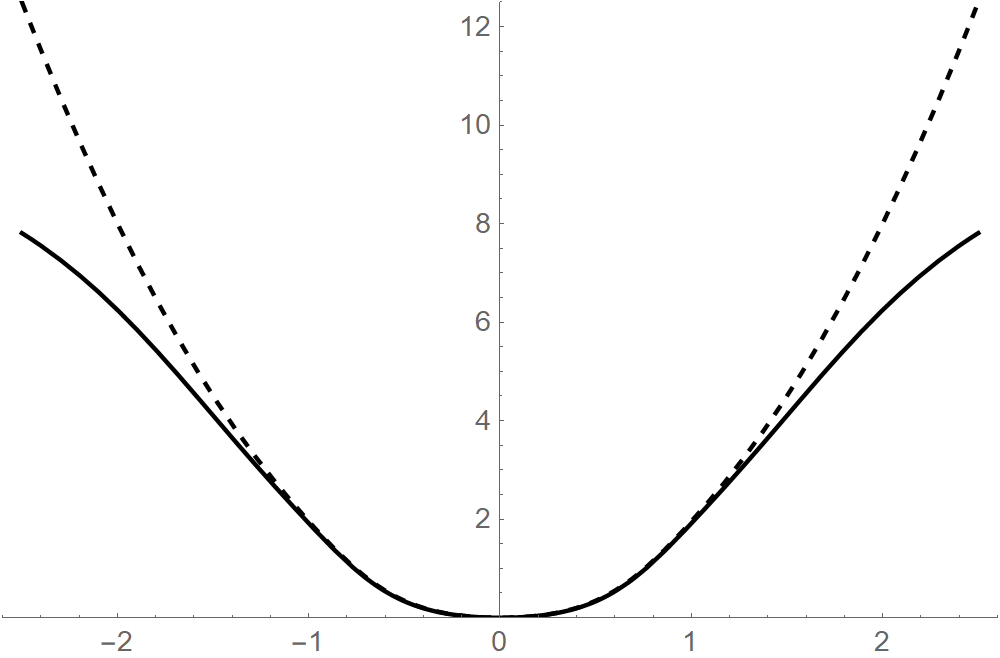}
		\end{center} \subcaption{$N=100$}
	\end{subfigure}
	\begin{subfigure}[h]{0.32\textwidth}
		\begin{center}
			\includegraphics[width=1.22in,height=0.8in]{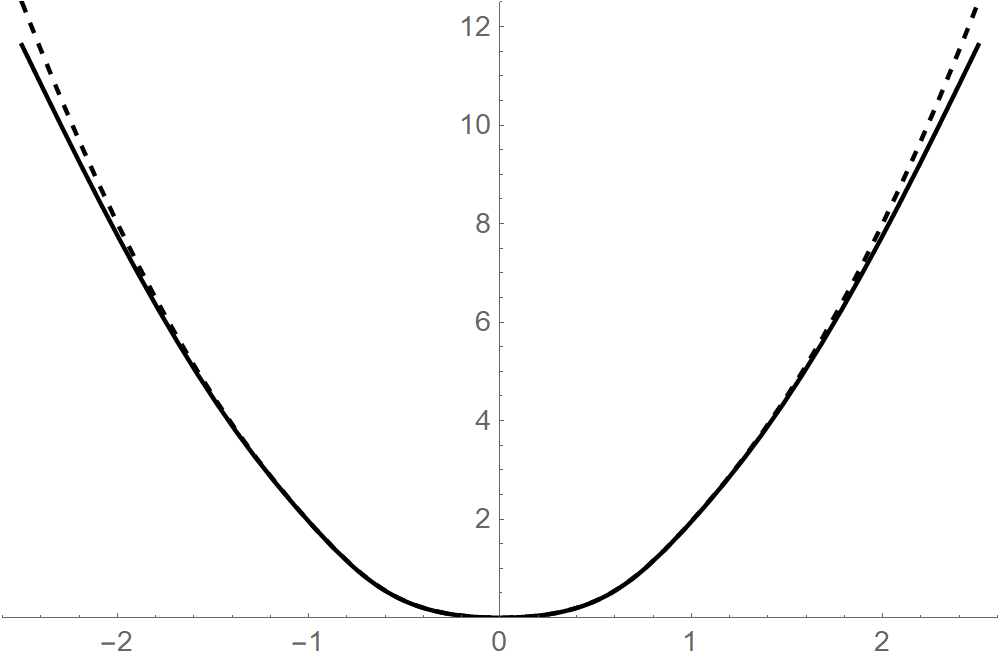}
		\end{center} \subcaption{$N=1000$}
	\end{subfigure}
	
	\vspace{0.5em}
	
		\begin{subfigure}{0.32\textwidth}
		\begin{center}	
			\includegraphics[width=1.22in,height=0.8in]{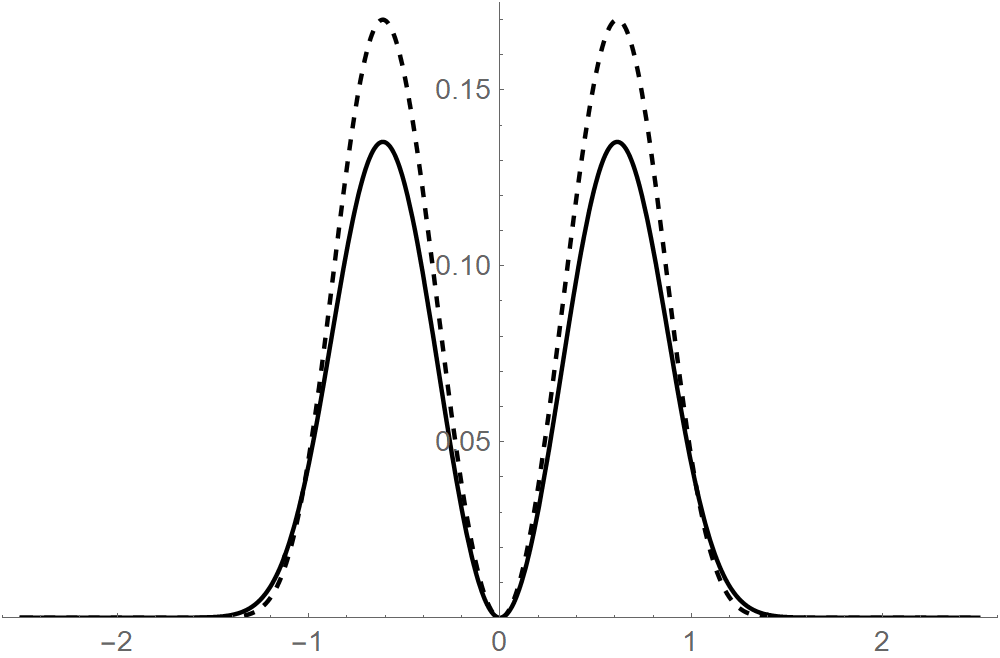}
		\end{center}
		\subcaption{$N=10$}
	\end{subfigure}	
	\begin{subfigure}[h]{0.32\textwidth}
		\begin{center}
			\includegraphics[width=1.22in,height=0.8in]{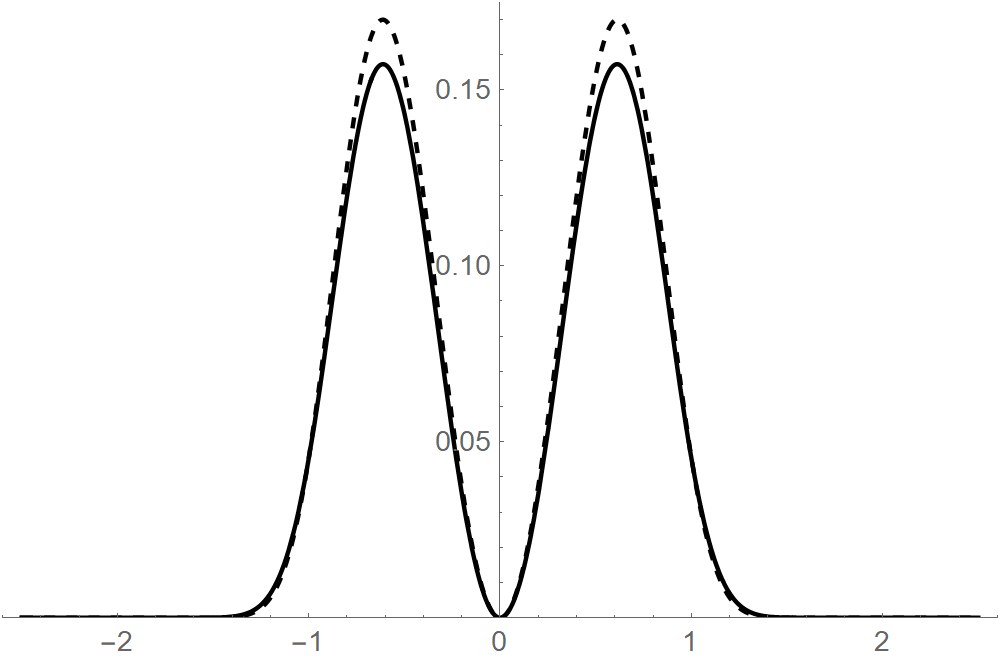}
		\end{center} \subcaption{$N=100$}
	\end{subfigure}
	\begin{subfigure}[h]{0.32\textwidth}
		\begin{center}
			\includegraphics[width=1.22in,height=0.8in]{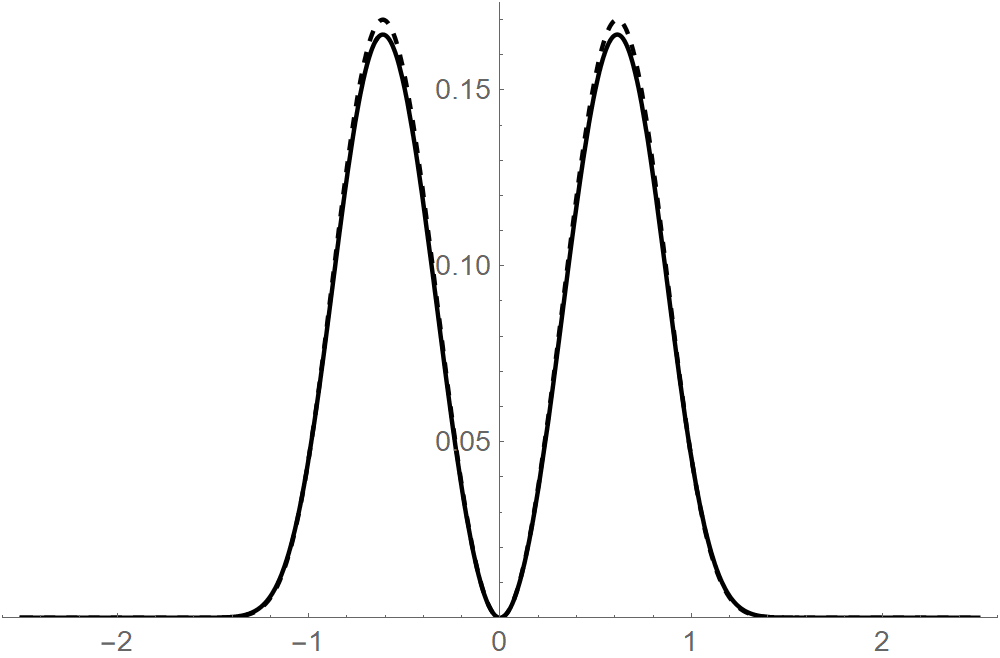}
		\end{center} \subcaption{$N=1000$}
	\end{subfigure}
	
	\caption{ We show cuts of the graphs of the rescaled densities $R_{N,1}(z)$ (full line) and its large-$N$ limit $R_1(z)$ (dashed line) at $z=x+iy$ restricted to $y=0$	 
for plots	(A)~--~(C) and to $x=0$ 
for plots (D)~--~(F) with $\alpha_N=1$ and $\tau=\tau_c$.  
} \label{R_V conv}
\end{figure}

\begin{rem}
Notice that the right/left-most edge points of the droplet \eqref{S alpha 2} are given by 
\begin{equation*}
\pm \sqrt{ (1+\tau\sqrt{1+\alpha})( \sqrt{1+\alpha}+\tau )  }.
\end{equation*}
Furthermore, beyond the critical regime when $\tau > \tau_c$, there are two more edge points on the real axis
\begin{equation*}
\pm \sqrt{(\tau\sqrt{1+\alpha}-1)(\sqrt{1+\alpha}-\tau)}.
\end{equation*}
We expect that in the limit of weak non-Hermiticity when $1-\tau\sim N^{-1/3} \to 0$, the local statistics at such points lies in the same universality class studied in \cite{akemann2010interpolation,MR2594353}, which interpolates the Airy and (boundary) Ginibre point processes. 
\end{rem}

\section{Preliminaries}\label{Prelim}

For given $A > B \ge 0$, we consider the $N$-dependent potentials $Q_N$, $V_N$ 
\begin{align}
Q_{N}(\zeta)&:=\frac{1}{N} \log \frac{1}{  K_\nu(AN|\zeta|) |\zeta|^\nu  }-B\re \zeta;
\\
\label{V_N}
V_N(\zeta)&:=\frac{1}{N} \log \frac{1}{  K_\nu(AN|\zeta|^2) |\zeta|^{2\nu+2}  }-B\re \zeta ^2,
\end{align}
where $K_\nu$ is the modified Bessel function of the second kind given by
\begin{equation*}
K_\nu(z):=\frac{\pi}{2} \frac{I_{-\nu}(z)-I_{\nu}(z)}{\sin \nu \pi}, \quad I_\nu(z):=\sum_{k=0}^{\infty} \frac{(z/2)^{2k+\nu}}{k! \,\Gamma(k+\nu+1)}.
\end{equation*}
Notice here that the potential $V_N$ is related to $Q_N$ as 
\begin{equation} \label{V Q relation}
V_N(\zeta)=Q_N(\zeta^2)-\frac{2}{N}\log|\zeta|.
\end{equation}

When $\tau \not= 1,$ the joint probability density 
$\widehat{\textbf{P}}_N$ of the eigenvalues $\widehat{\boldsymbol{\zeta}}=(\widehat{\zeta}_1,\cdots,\widehat{\zeta}_N)$ of the non-Hermitian Wishart matrix $X$ is of the form 
\begin{equation} \label{Gibbs}
d\textbf{P}_N( \widehat{\boldsymbol{\zeta}} )= \frac{1}{\widehat{Z}_N} e^{- \widehat{\bfH}_N( \widehat{\boldsymbol{\zeta}} ) } \prod_{j=1}^{N} dA( \widehat{\zeta}_j),
\end{equation}
where $\widehat{Z}_N$ is the normalisation constant (also known as the partition function), and $\widehat{\bfH}_N$ is the Hamiltonian given by 
\begin{equation} \label{H d=1}
\widehat{\bfH}_N( \widehat{\boldsymbol{\zeta}} ):=\sum_{j\neq k} \log \frac{1}{| \widehat{\zeta}_j-\widehat{\zeta}_k|}+N\sum_{j=1}^{N} Q_N(\widehat{\zeta}_j).
\end{equation}
Here $A,B$ are related to the non-Hermiticity parameter $\tau$ as
\begin{equation} \label{A B tau}
A=\frac{2}{1-\tau^2}, \quad B=\frac{2\tau}{1-\tau^2}.
\end{equation}
Similarly, the joint probability density $\textbf{P}_N$ of the (random) eigenvalue system $\boldsymbol{\zeta}=(\zeta_1,\cdots,\zeta_N)$ of the Dirac matrix $\mathcal{D}$ is given by 
\begin{equation} \label{Gibbs square}
d\textbf{P}_N(\boldsymbol{\zeta})= \frac{1}{Z_N} e^{- \bfH_N( \boldsymbol{\zeta} ) } \prod_{j=1}^{N} dA(\zeta_j),
\end{equation}
where
\begin{equation} \label{H d=2}
\bfH_N( \boldsymbol{\zeta} ):=\sum_{j\neq k} \log \frac{1}{|\zeta_j^2-\zeta_k^2|}+N\sum_{j=1}^{N} V_N(\zeta_j).
\end{equation}

We emphasise here that when we consider the models as a particle system governed by the Boltzmann–Gibbs law \eqref{Gibbs} or \eqref{Gibbs square}, we allow $\nu$ to be an arbitrary real number as long as $\nu>-1$, and $A, B$ be arbitrary real parameters satisfying $A>B \ge 0.$ When we consider such general set-up, we sometimes use the notation $\tau:=B/A.$

It is well known that the above particle systems have the structure of a determinant, namely, the  $k$-point correlation functions 
\begin{align}
\begin{split}
\widehat{\bfR}_{N,k}(\widehat{\zeta}_1,\cdots,\widehat{\zeta}_k)&:=\frac{1}{\widehat{Z}_N} \frac{N!}{(N-k)!} \int_{ \C^{N-k} }  e^{- \widehat{\bfH}_N( \widehat{\boldsymbol{\zeta}} ) } \prod_{j=k+1}^N  dA( \widehat{\zeta}_j);
\\
\bfR_{N,k}(\zeta_1,\cdots,\zeta_k)&:=\frac{1}{Z_N} \frac{N!}{(N-k)!} \int_{ \C^{N-k} }  e^{- \bfH_N( \boldsymbol{\zeta} ) } \prod_{j=k+1}^N  dA(\zeta_j) \label{ bfR_{N,k} },
\end{split}
\end{align}
are expressed in terms of certain correlation kernels $\widehat{\bfK}_N,\bfK_N$ as 
\begin{align}
\widehat{\bfR}_{N,k}(\widehat{\zeta}_1,\cdots,\widehat{\zeta}_k)&=\det \Big[ \widehat{\bfK}_N( \widehat{\zeta}_j,\widehat{\zeta}_l) \Big]_{ 1 \le j,l \le k } ;
\\
\bfR_{N,k}(\zeta_1,\cdots,\zeta_k)&=\det \Big[ \bfK_N(\zeta_j,\zeta_l) \Big]_{ 1 \le j,l \le k } .
\end{align}

Moreover, it was discovered by Osborn in \cite{osborn2004universal} that the model \eqref{Gibbs square} can be exactly solved due to the orthogonality relation
\begin{equation} \label{Orthogonal relation}
\int_{ \C } L_j^\nu(c \, \zeta^2) \overline{L_k^\nu(c \, \zeta^2 )} e^{-NV_N(\zeta)} \, dA(\zeta)=\frac{h_j^\nu}{2} \, \delta_{jk}, \quad c=\frac{A^2-B^2}{2B}N,
\end{equation}
where $L_j^\nu$ is the generalised Laguerre polynomial (\cite[Chapter \RN{5}]{MR0372517}) given by
\begin{equation} \label{Laguerre poly}
L_j^\nu(z):=\sum_{k=0}^{j} \frac{ \Gamma(j+\nu+1) }{  (j-k)! \, \Gamma(\nu+k+1) } \frac{(-z)^k}{k!}.
\end{equation}
Here, the orthogonal norm $h_j^\nu$ is given by
\begin{equation}
h_j^\nu=\frac{1}{A N^{\nu+2} } \Big( \frac{2A}{A^2-B^2}  \Big)^{\nu+1} \frac{\Gamma(j+\nu+1)}{j!} \Big( \frac{A}{B} \Big)^{2j}.
\end{equation}
The orthogonality relation \eqref{Orthogonal relation} was conjectured for $\nu=\pm 1/2$ in \cite{akemann2002microscopicA} and for general $\nu\in\mathbb{N}$ in \cite{osborn2004universal}.
It was first proved in \cite[Appendix A]{MR2180006}. We also refer the reader to \cite[Proposition 1]{akemann2010interpolation} for an elementary proof 
by induction, based on the contour integral representation of $L_j^\nu$. 
By virtue of the orthogonality relation \eqref{Orthogonal relation} and Dyson's determinantal formula (see e.g., \cite[Chapter 5]{forrester2010log}), the correlation kernel $\bfK_N$ has an expression
\begin{equation} \label{bfK V}
\bfK_N(\zeta,\eta)= e^{-N V_N(\zeta)/2-N V_N(\eta)/2} \sum_{j=0}^{N-1} \frac{ L_j^\nu(c \, \zeta^2) \overline{L_j^\nu (c \, \eta^2)}   }{h_j^\nu/2}.
\end{equation}
In particular, we have
\begin{align} 
\bfR_{N,1}(\zeta)&= 2 A N^{\nu+2} \Big( \frac{A^2-B^2}{2A}  \Big)^{\nu+1} K_\nu(AN|\zeta|^2) |\zeta|^{2\nu+2} 
\\
& \times e^{B N \re \zeta^2} \sum_{j=0}^{N-1} \frac{j!}{\Gamma(j+\nu+1)} \Big( \frac{B}{A} \Big)^{2j} \Big|  L_j^\nu\Big( \frac{A^2-B^2}{2B}N \zeta^2 \Big)  \Big|^2.
\end{align}

By definition, the $k$-point correlation function $R_{N,k}$ of the rescaled point process $\boldsymbol{z}$ is given by 
\begin{equation} \label{ R_{N,k} }
R_{N,k}(z_1,\cdots,z_k)=\frac{1}{(N\delta)^{k/2 }  } \bfR_{N,k}(\zeta_1,\cdots,\zeta_k)=\det \Big[ K_N(z_j, z_l) \Big]_{ 1 \le j,l \le k },
\end{equation}
where 
\begin{equation} \label{rescaling}
K_N(z,w):=\frac{1}{\sqrt{N \delta}} \bfK_N \Big(  \frac{z}{(N \delta)^{1/4} },  \frac{w}{(N \delta)^{1/4} } \Big).
\end{equation}
Thus the rescaled density $R_{N,1}$ has the following expression 
\begin{align} \label{R_{N,1}}
\begin{split}
R_{N,1}(z)
&= 4 \, \nu^{\frac{\nu}{2}+1}  ( 1-\tau^2  )^{\nu+1} K_\nu( 2\sqrt{\nu} |z|^2) |z|^{2\nu+2}
\\
&\times  e^{ 2 \tau \sqrt{\nu} \re z^2} \sum_{j=0}^{N-1} \frac{ \tau^{2j} j! }{\Gamma(j+\nu+1)}  \Big|  L_j^\nu\Big( \frac{1-\tau^2}{\tau}\sqrt{ \nu } z^2 \Big)  \Big|^2.
\end{split}
\end{align}

\section{Global statistics and multi-critical point} \label{Section_global}

\subsection{The asymptotic behaviours of the potentials}
In this subsection, we compile some asymptotic behaviours of the potentials $V_N, Q_N$ when $N \to \infty$. In order to describe the asymptotic behaviours of functions, we often use the following notations, due to Bachmann and Landau. 
If $f(x)/g(x)$ tends to $1$ as $x\to\infty,$ we say that $f$ is asymptotic to $g$ and write $f(x) \sim g(x).$ 

First, we consider the case that the parameter $\nu$ is fixed. 
In this case, since
\begin{equation}\label{K nu asy}
K_\nu(z) \sim \sqrt{ \frac{\pi}{2z}} e^{-z},\quad \text{as } z\to \infty,
\end{equation}
for any fixed $\nu$ (see \cite[Eq.(10.40.2)]{olver2010nist}), the dominant term of the potential $Q_N$ is given by
\begin{equation}\label{Q fixed nu}
\widetilde{Q}_0(\zeta)=A|\zeta|-B \re\zeta.
\end{equation} 
Here and in the sequel, we sometimes use the convention that two potentials are  considered to be equal if they differ by an $N$-dependent constant.
Secondly, assume that the parameter $\nu$ scales with $N$ as \eqref{nu alpha}.
Note that by \cite[Eq.(10.41.4)]{olver2010nist}, as $\nu \to \infty$, 
\begin{equation}\label{K nu asy inf}
K_\nu(\nu |z|) \sim \Big( \frac{\pi}{2\nu} \Big)^{\frac12 } ( 1+|z|^2)^{-\frac14} \Big( \frac{1+\sqrt{1+|z|^2}}{|z|}  \Big)^\nu e^{-\nu \sqrt{1+|z|^2}}
\end{equation}
uniformly for $0<|z|<\infty.$ Therefore for the scale \eqref{nu alpha}, we have
\begin{equation} \label{Q_N asym}
Q_N(\zeta) = \widetilde{Q}_{\alpha}(\zeta)+\frac{1}{4N} \log ( A^2|\zeta|^2+\alpha^2 )+\frac{1}{2N}\log N+o\Big( \frac{1}{N}\Big),
\end{equation}
where 
\begin{equation} \label{Q alpha}
\widetilde{Q}_\alpha(\zeta):=\sqrt{A^2|\zeta|^2+\alpha^2 }-B \re \zeta-\alpha \log \, ( \sqrt{A^2|\zeta|^2+\alpha^2}+\alpha  ).
\end{equation}
Thus on the macroscopic level, one can formally think of the case with fixed $\nu$ as a special case of \eqref{nu alpha} with $\alpha=0.$ This is consistent with the definition in \eqref{Q fixed nu}. 

On the other hand, by the relation \eqref{V Q relation}, the potential $V_N$ tends to 
\begin{equation} \label{V_alpha}
\widetilde{V}_\alpha(\zeta):=\sqrt{A^2|\zeta|^4+\alpha^2 }-B \re \zeta^2-\alpha \log  \, ( \sqrt{A^2|\zeta|^4+\alpha^2}+\alpha  )-\frac{2}{N}\log|\zeta|.
\end{equation}
Note also that the Laplacian $\Delta:=\partial \bar{\partial}$ of the potentials are given by
\begin{equation} \label{density}
\Delta \widetilde{Q}_{\alpha}(\zeta)=\frac{A^2}{4} \frac{1}{\sqrt{ A^2 |\zeta|^2+\alpha^2 }},
\end{equation}
and
\begin{equation}
\Delta \widetilde{V}_\alpha(\zeta)=4|\zeta|^2 \Delta \widetilde{Q}_{\alpha}(\zeta^2)= \frac{A^2 |\zeta|^2}{\sqrt{ A^2 |\zeta|^4+\alpha^2 }}.
\end{equation}
In particular, we have 
$\Delta \widetilde{V}_0(\zeta)=A.$

\subsection{A non-Hermitian generalisation of the Marchenko-Pastur law} 
Let us denote by $\widehat{\E}_N$ the expectation with respect to the measure $\widehat{\textbf{P}}_N.$ 
Then due to Johansson's marginal measure theorem for the plane (see \cite[Theorem  2.9]{HM13} and \cite[Theorem 2.1]{MR1487983}), for each bounded continuous function $f$, we have 
\begin{equation}
\frac{1}{N} \widehat{\E}_N\Big[ f(\widehat{\zeta}_1)+\cdots +f(\widehat{\zeta}_N) \Big] \to \int f \, d \widehat{\mu}, 
\end{equation}
where $\widehat{\mu}$ is Frostman's equilibrium measure (i.e., a unique  minimiser among compactly supported probability measures) of the weighted logarithmic energy functional
\begin{equation} \label{I}
	I[\sigma]:=\int_{\C^2 } \log \frac{1}{|\zeta-\eta|^2} \, d\sigma(\zeta)\, d\sigma(\eta) +2\int_{\C} \widetilde{Q}_{\alpha}(\zeta) \,d\sigma(\zeta).
\end{equation}
We emphasise here that one has to slightly generalise the marginal measure theorem by the $N$-dependence of the potential. 
To be more precise, by adapting fairly standard arguments from \cite{HM13,MR1487983}, the same conclusion can be drawn for an $N$-dependent potential $Q_N$ of the form $Q_N=Q+u/N$, where $u$ is a continuous function. 
Thus the asymptotic behaviour \eqref{Q_N asym} allows us to apply this result. 

It is well known that the equilibrium measure $\widehat{\mu}$ is characterised by the variational conditions (\cite[p.27]{ST97}):
\begin{align} \label{E-L 1}
\begin{split}
\int \log \frac{1}{|\zeta-z|}\, d\widehat{\mu}(z)+\frac{\widetilde{Q}_\alpha(\zeta)}{2}&=c, \quad \text{q.e.,\quad if }\zeta \in \widehat{S}_\alpha;
\\
\int \log \frac{1}{|\zeta-z|}\, d\widehat{\mu}(z)+\frac{\widetilde{Q}_\alpha(\zeta)}{2}& \ge c, \quad \text{q.e.,\quad otherwise.}
\end{split}
\end{align} 
Here, $c$ is called the modified Robin constant. 
Moreover, $\widehat{\mu}$ is absolutely continuous with respect to the Lebesgue measure and takes the form 
\begin{equation} \label{hat mu density}
d\widehat{\mu}=\Delta \widetilde{Q}_{\alpha} \cdot \mathbbm{1}_{ \widehat{S}_\alpha } \, dA
\end{equation}
for a certain compact set $\widehat{S}_\alpha$ called the \emph{droplet}. We refer to \cite{ST97} for a standard reference on logarithmic potential theory. Thus the density \eqref{NWishart_density} in Theorem~\ref{Thm_NWishart} immediately follows from \eqref{A B tau} and \eqref{density}.  Therefore, to describe the macroscopic behaviour of the particle system \eqref{Gibbs} in the large-$N$ limit, it suffices to characterise the shape of the droplet $\widehat{S}_\alpha$. 

\medskip 
Before dealing with the general set-up, let us recall the well-known extremal cases. 

	\begin{itemize}
		\item \textit{The maximally non-Hermitian limit:} $\tau=0$. 
		Recall that for a radially symmetric potential $q(|\zeta|)$, the droplet is given by an annulus $\{ z \in \C :  R_1 \le |z| \le R_2 \}$, where $R_1, R_2$ are the unique pair of constants satisfying
		$$
		R_1 q'(R_1)=0, \quad R_2 q'(R_2)=2, 
		$$
		see \cite[Section \RN{4}.6]{ST97}. 
		Note that for $\tau=0$, we have
		$$
	\widetilde{Q}_{\alpha}(\zeta)=q_\alpha(|\zeta|), \quad q_\alpha(r):=\sqrt{4r^2+\alpha^2}-\alpha \log  \, ( \sqrt{4 r^2+\alpha^2}+\alpha )
		$$
		and
		$$
		r q_\alpha'(r)=\frac{4r^2}{ \sqrt{4r^2+\alpha^2}+\alpha }. 
		$$
		Therefore the associated droplet is the disc with centre the origin and radius  $\sqrt{1+\alpha}$.
		\item \textit{The Hermitian limit:} $\tau \uparrow 1$. In this case, one can observe from the expression \eqref{Q alpha} that 
		$$
		\lim_{\tau \uparrow 1} \widetilde{Q}_{\alpha}(x+iy)= \begin{cases}
		x-\alpha \log x &\text{for } x>0, \hspace{0.5em} y=0,
		\\
		\quad \quad \infty &\text{otherwise.}
		\end{cases} 
		$$
		Thus in the Hermitian limit, by solving the associated equilibrium problem on the positive real axis,	one can observe that the macroscopic density follows the Marchenko-Pastur law \eqref{MP}, see e.g., \cite[Section 3.4]{forrester2010log}.
	\end{itemize}

For general $A>B \ge 0$ and $\alpha \ge 0$, we will obtain that the droplet $\widehat{S}_\alpha$ is enclosed by an ellipse 
\begin{equation} \label{S alpha}
\frac{(x-x_0)^2}
{ a ^2 } + \frac{y^2 }{ b^2 }   = 1,
\end{equation}
where $\tau=B/A$ and
\begin{equation}
x_0:=\dfrac{2}{A}\frac{\tau}{1-\tau^2}(2+\alpha), \quad a:=\dfrac{2}{A} \dfrac{1+\tau^2}{1-\tau^2} \sqrt{1+\alpha}, \quad b:= \dfrac{2}{A} \sqrt{1+\alpha}.
\end{equation}
This recovers Theorem~\ref{Thm_NWishart} when we set the parameters $A,B$ as \eqref{A B tau}.

Under the assumption that $\widehat{S}_\alpha^c$ is a simply connected domain, we derive \eqref{S alpha} by means of the conformal mapping method, which is widely used in the theory of  Hele-Shaw flow (see e.g., \cite{MR2245542}). Then we show the simply connectedness of $\widehat{S}_\alpha^c$ using the variational conditions \eqref{E-L 1}. On the other hand, for the general theory on the connectivity of the droplet associated with Hele-Shaw type potentials, we refer the reader to \cite{MR3454377} and references therein. 

To prove Theorem~\ref{Thm_NWishart}, we use the Schwarz function associated with $\widehat{S}_\alpha$ and the Cauchy transform of the equilibrium measure $\widehat{\mu}.$
For a given domain $\Omega \subsetneq \C$ such that $\infty \notin \partial \Omega$, the \textit{Schwarz function} $F: \bar{\Omega} \to \C\cup \{ \infty\}$ is a unique meromorphic function (if it exists) satisfying 
\begin{equation}
F(\zeta)=\bar{\zeta} \quad \text{on } \partial \Omega.
\end{equation}
The Cauchy transform $C_\mu$ of the measure $\mu$ is defined by
\begin{equation}
C_\mu(\zeta):=\lim_{\eps \to 0}  \int_{|\zeta-z|>\eps} \frac{d\mu(z)}{\zeta-z},
\end{equation}
if the limit exists. 

We now prove Theorem~\ref{Thm_NWishart}.

\begin{proof}[Proof of Theorem~\ref{Thm_NWishart}]
	Let $\widehat{\mu}$ be the equilibrium measure associated with the potential $\widetilde{Q}_{\alpha}$ and write $ \widehat{S} \equiv \widehat{S}_\alpha$ for the support of $\widehat{\mu}$. As explained above, it suffices to characterise the shape of the droplet $\widehat{S}$. 
	
	We first find a candidate $\widehat{S}$, assuming that the complement of $\widehat{S}$ is a simply connected domain.
	For this, let $f$ be the conformal map $(\bar{\mathbb{D}}^c,\infty) \to (\widehat{S}^c,\infty)$ such that 
	\begin{equation}
	\label{f-def}
	f(\zeta)=R\, \zeta +q+O\Big( \frac{1}{\zeta} \Big), \quad \text{as }\zeta \to \infty, 
	\end{equation}
	where $R$ is positive, called the conformal radius of $\widehat{S}$.
	
	\medskip
    We prove the theorem by using the following steps: 
    
    \begin{enumerate}[label=\arabic*.] 
     \smallskip	\item we define the analytic continuation of $f$ to $\C \backslash \{0\}$, using the Schwarz function associated with $\widehat{S}$;
     \smallskip	\item we show that the conformal map $f$ is given by the (translated) \textit{Joukowsky transform}
    	\begin{equation*}
    	f(z)=\frac{2}{A}\frac{\sqrt{1+\alpha}}{1-\tau^2}\Big(  z+ \frac{\tau^2}{z} \Big)+x_0,
    	\end{equation*}
    	which maps the unit circle $\partial \mathbb{D}$ to the ellipse $\partial \widehat{S}$;
     \smallskip	\item we show that this candidate $\widehat{S}$ is indeed the desired droplet using the variational conditions.
    \end{enumerate}

   We begin with proving step 1.
   
   \medskip 
  \noindent \textit{Step 1.} Let us first express the Schwarz function $F$ associated with the droplet $\widehat{S}$ in terms of the Cauchy transform $C \equiv C_{\widehat{\mu}}$ of the equilibrium measure $\widehat{\mu}$. 
  By differentiating \eqref{E-L 1}, we have
    \begin{equation} \label{E-L differentiate}
    \partial_\zeta \widetilde{Q}_\alpha(\zeta)=C(\zeta),
    \end{equation}	
    in $\widehat{S}.$ If one can ``separate'' the variable $\bar{\zeta}$ in the above identity, the desired expression of $F$ is derived. Indeed, this procedure is not available for general potentials, but it turns out that the potential $\widetilde{Q}_{\alpha}$ under consideration is one of the very special cases in which this can be done. 
	
	More precisely, by \eqref{Q alpha}, we have 
	\begin{equation} \label{partial Q alpha} 
		\partial_\zeta \widetilde{Q}_{\alpha}(\zeta)
	=\frac{1}{2\zeta} (  \sqrt{A^2|\zeta|^2+\alpha^2}-\alpha  )-\frac{B}{2} .
	\end{equation}
    By \eqref{E-L differentiate}, we observe
	$$
	\bar{\zeta}=\frac{\zeta}{A^2} \Big[ \Big( 2 C(\zeta)+B+\frac{\alpha}{\zeta} \Big)^2- \Big(  \frac{\alpha}{\zeta} \Big)^2  \Big]
	$$
	 in $\widehat{S}.$ 
	Therefore the Schwarz function $F$ associated with $\widehat{S}$ exists and it is expressed in terms of $C$ as 
	\begin{equation} \label{Schwarz}
		F(\zeta)=\frac{\zeta}{A^2}\Big( 2C(\zeta)+B \Big)\Big( 2C(\zeta)+B+\frac{2\alpha}{\zeta} \Big).
	\end{equation}
    Note that by \eqref{Schwarz}, for $z \in \partial \mathbb{D},$
    \begin{equation}
    \overline{f( 1/\bar{z} )}=\overline{f(z)}
    = \frac{f(z)}{A^2} \Big( 2C(f(z))+B \Big)\Big( 2C(f(z))+B+\frac{2\alpha}{f(z)} \Big) .
    \end{equation}
    Now let us define $f: \overbar{\mathbb{D}}\backslash \{0\} \to \C$ by analytic continuation as 
    \begin{equation} \label{f analy cont}
    f(z):=\overline{ \frac{f( 1/\bar{z}  )}{A^2} \Big( 2C(f(1/\bar{z}))+B \Big)\Big( 2C(f(1/\bar{z}))+B+\frac{2\alpha}{f(1/\bar{z})} \Big)   }. 
    \end{equation}
    Therefore we have obtained the analytic continuation $f$ of \eqref{f-def} to $\C \backslash\{0\}$. Here and henceforth, we abuse notation by letting $f$ denote the analytic continuation of $f$ to $\C \backslash\{0\}$.
    
    \medskip
    \noindent \textit{Step 2.} By construction, complex infinity is the only (simple) pole of $f$ in $\mathbb{D}^c$. On the other hand, notice here that since $\widehat{\mu}(\C)=1$, we have
    \begin{equation}
     \label{C near inf}
    C(\zeta)=\frac{1}{\zeta}+O\Big( \frac{1}{\zeta^2} \Big), \quad \text{as }\zeta \to \infty.
    \end{equation}
      Therefore from \eqref{Schwarz} and \eqref{f analy cont}, one can observe that in $\mathbb{D}$, the origin is the unique (simple) pole of $f$.
     Combining all of the above, we obtain that the function $f:\widehat{\C} \to \widehat{\C}$ is analytic except for the simple poles $0$ and $\infty$. In other words, we have shown that the map $f$ is of the form 
     \begin{equation} \label{f rational}
     f(z)=R \, z+q+\frac{R^*}{z}. 
     \end{equation}

    Now we determine three constants $R, q$ and $R^*$ in terms of $\alpha$ and $\tau$. To obtain interrelations among the constants, we calculate the asymptotic behaviour of $f$ near the origin using \eqref{f analy cont}.
    For this, note that 
    \begin{equation}
      \overline{f(1/\bar{z})}=\frac{R}{z}+q+O(z), \quad 	\overline{ C( f(1/\bar{z}) ) }=\frac{z}{R}+O(z^2), \quad \text{as }z \to 0.
    \end{equation}
    By \eqref{f analy cont}, $f$ has the Laurent series expansion about the origin
    \begin{equation} \label{f rational 2}
    f(z)   = \frac{\tau^2 R}{z}+ \tau^2 q+ \frac{2\tau(2+\alpha)}{A}+O(z).
    \end{equation}
    Comparing the coefficient of $1/z$ of \eqref{f rational} and that of \eqref{f rational 2}, we find
    \begin{equation}
    R^*=\tau^2 R.
    \end{equation}
    Moreover, by comparing the constant terms, we have
    \begin{equation}
    \tau^2 q+ \frac{2\tau(2+\alpha)}{A}= q,
    \end{equation}
    which leads to
    \begin{equation}
    q=\dfrac{2}{A}\frac{\tau}{1-\tau^2}(2+\alpha)=x_0.
    \end{equation}
    
    Combining all of the above, we have shown that $f$ is a rational function of the form 
    \begin{equation} \label{f} 
    f(z)=R \, \Big( z+ \frac{\tau^2}{z} \Big)+x_0.
    \end{equation} 
    In other words, $\widehat{S}$ is of the form
     \begin{equation} \label{S hat R}
    \widehat{S}:= \Big\{ \zeta=x+iy:  \Big(  \frac{x-x_0}{R\,(1+\tau^2)}  \Big)^2+ \Big(  \frac{y}{R\,(1-\tau^2)}  \Big)^2 \le 1 \Big\}.
    \end{equation}
    
     Next, we show
    \begin{equation} \label{R}
    R=\frac{2}{A}\frac{\sqrt{1+\alpha}}{1-\tau^2}.
    \end{equation}
    Since $\widehat{\mu}$ is the equilibrium measure of mass-one, it follows from \eqref{hat mu density} that 
    \begin{equation}\label{mass 1 gen}
   \frac{A^2}{4}  \int_{\widehat{S}} \frac{1}{\sqrt{ A^2 |z|^2+\alpha^2 }} \, dA(z)=1.
    \end{equation}
    Notice here that since the semi-major/minor axis of $\widehat{S}$ is linear in $R$, the left-hand side of \eqref{mass 1 gen} is an increasing function of $R$. Therefore it suffices to show \eqref{mass 1 gen} with the choice of \eqref{R}. 
   
   We first consider the case $0 \notin \partial \widehat{S}$. The special case $0 \in \partial \widehat{S}$ will be treated later as a limiting case. Let us write $\mathbb{D}_\eps$ for the disc with centre the origin and  a sufficiently small radius $\eps$.
   Then, by applying Green's formula to the domain $\widehat{S} \backslash \mathbb{D}_\eps$, the left-hand side of \eqref{mass 1 gen} is computed as
   \begin{equation}
   \label{rhs Green}
   \frac{1}{2\pi i} \int_{\partial \widehat{S} } \frac{\sqrt{A^2|z|^2+\alpha^2 }}{2z} \, dz -\frac{\alpha}{2} \cdot \mathbbm{1}_{ \{ 0 \in \textrm{int}(\widehat{S}) \} }.
   \end{equation}
   In the special case $0 \in \partial \widehat{S}$, the line integral should be understood as a principal value. 
  Using the change of variable $z=f(w)$, we have 
  \begin{equation}
    \int_{\partial \widehat{S} } \frac{\sqrt{A^2|z|^2+\alpha^2 }}{2z} \, dz
  =\int_{\partial \mathbb{D}} \frac{ \sqrt{ A^2  f(w) \overline{ f(1/\bar{w})}   +\alpha^2 }}{2f(w)} f'(w) \, dw
  =\int_{\partial \mathbb{D}} g(w)  \, dw,
  \end{equation}
    where $g$ is a rational function of the form
    \begin{equation} \label{g}
     g(w) :=\frac{\tau}{1-\tau^2}\sqrt{1+\alpha} \frac{   w^2+\frac{1+\tau^2}{2\tau}\frac{2+\alpha}{\sqrt{1+\alpha}}w+1   }{ w^2 + \tau \frac{2+\alpha}{\sqrt{1+\alpha}} w+ \tau^2     } \Big(  1- \frac{\tau^2}{w^2}  \Big).
    \end{equation}
   Here, we use the fact that 
    \begin{align}
    \begin{split}
    A^2  f(w) \overline{ f(1/\bar{w})}   +\alpha^2&=\Big( B R \, w+\frac{1+\tau^2}{1-\tau^2}(2+\alpha)+\frac{B R}{w} \Big)^2
    \\
    & +A^2 (1-\tau^2)^2 \Big( R^2-\frac{4}{A^2} \frac{1+\alpha}{(1-\tau^2)^2}  \Big)
    \end{split}
    \end{align}
    becomes the square of a rational function, if we choose $R$ as presented in \eqref{R}.
    
    Note that $g$ has poles only at 
    \begin{equation} \label{p1 p2}
    0, \quad p_1:=-\tau/\sqrt{1+\alpha},\quad p_2:=-\tau\sqrt{1+\alpha},
    \end{equation}
    with the residues
    \begin{equation}
      1+\frac{\alpha}{2}, \quad -\frac{\alpha}{2}, \quad \frac{\alpha}{2},
    \end{equation}
    respectively.
    
    Notice here that since $A>B \ge 0$, the pole $p_1$ is always contained in $\mathbb{D}$. 
    On the other hand, due to the formula \eqref{S hat R},
    $$
  \partial \widehat{S} \cap \R = \Big\{  \frac{2}{B} \frac{\tau^2}{1-\tau^2}(2+\alpha)\pm R \, (1+\tau^2)  \Big\} .
    $$
    Therefore with the choice of \eqref{R}, we have 
    $$
    0\in \textrm{int}(\widehat{S}) \quad \textrm{if and only if} \quad
    \tau < 1/\sqrt{1+\alpha}.
    $$
    Thus we observe that 
    $$ p_2 \in \mathbb{D} \quad \textrm{if and only if} \quad 0\in \textrm{int}(\widehat{S}).$$ 
    By the residue calculus we obtain that 
    \begin{equation}\label{int g}
    \frac{1}{2\pi i} \int_{\partial \mathbb{D}} g(w) \, dw=1+\frac{\alpha}{2} \cdot \mathbbm{1}_{ \{ 0 \in \textrm{int}(\widehat{S}) \} }.
    \end{equation}
    Combining \eqref{rhs Green} and \eqref{int g}, the desired equation \eqref{mass 1 gen} follows in the case $0 \notin \partial \widehat{S}$. 

    We now turn to the special case $0 \in \partial \widehat{S}$. In this case, we have 
    \begin{equation}
    \tau=\tau_c:=1/\sqrt{1+\alpha}, \quad p_2:=-\tau \sqrt{1+\alpha}=-1 \in \partial \mathbb{D}, \quad 0=f(-1).
    \end{equation}
    Let us denote by $\mathbb{D}_r(\eta)$ the disc with centre $\eta \in \C$ and radius $r.$ For a sufficiently small $\eps>0$, we consider 
    \begin{equation}
    \widehat{S}_\eps:=\widehat{S} \, \backslash \, f (  \mathbb{D}_\eps(-1) ).
    \end{equation}
    Then the boundary of $f^{-1}(\widehat{S}_\eps)=\mathbb{D}  \backslash  \mathbb{D}_\eps(-1)$ is the union of two arcs of circles: 
    \begin{equation}
    \partial f^{-1}(\widehat{S}_\eps)= \Upsilon_\eps^1 \cup \Upsilon_\eps ^2, \quad  \Upsilon_\eps^1  \subseteq \partial \mathbb{D}, \quad  \Upsilon_\eps^2  \subseteq \partial \mathbb{D}_\eps(-1),
    \end{equation}
    where $\Upsilon_\eps^1$ is oriented in the anticlockwise direction and $\Upsilon_\eps^2$ in the clockwise direction. Applying Green's formula to $\widehat{S}_\eps$, we obtain    
    \begin{equation} \label{Green spec}
    \frac{A^2}{4}  \int_{\widehat{S}_\eps} \frac{1}{\sqrt{ A^2 |z|^2+\alpha^2 }} \, dA(z)=\frac{1}{2\pi i}  \int_{ \partial \widehat{S}_\eps}  \frac{\sqrt{A^2|z|^2+\alpha^2 }}{2z} \, dz.
    \end{equation}
    Using the change of variable $z=f(w)$ again, the above integral is computed as
    \begin{equation}
    \frac{1}{2\pi i} \int_{ \Upsilon_\eps^1 \cup \Upsilon_\eps^2  } g(w) \, dw+\frac{1}{2\pi i} \int_{ \Upsilon_\eps^2 } \widetilde{g}(w) \,dw, 
    \end{equation}
    where 
    \begin{equation}
    \widetilde{g}(w):=\frac{\sqrt{A^2|f(w)|^2+\alpha^2}}{2f(w)} f'(w)-g(w).
    \end{equation}
    Note that $\widetilde{g}$ vanishes on the unit circle. 
    By the residue calculus we have 
    \begin{equation}
    \frac{1}{2\pi i} \int_{ \Upsilon_\eps^1 \cup \Upsilon_\eps^2  } g(w) \, dw=   \underset{ \strut w=0 }{\textrm{Res}}  \hspace{0.35em} g(w)+ \hspace{-0.5em} \underset{ \strut \hspace{0.5em} w=p_1 }{\textrm{Res}} \hspace{-0.15em} g(w)=1.
    \end{equation}  
    On the other hand, as $\eps \to 0$ 
    \begin{equation}
    \frac{1}{2\pi i} \int_{ \Upsilon_\eps^2 } \widetilde{g}(w) \,dw \to 0,
    \end{equation}
    since $\textrm{length}(\Upsilon_\eps^2) \to 0$ and $\sup\limits_{ w \in \mathbb{D}_\eps(-1)  } |\widetilde{g}(w)| \to 0$ as $\eps \to 0$. Taking the limit in \eqref{Green spec} as $\eps \to 0$, we obtain the desired equation \eqref{mass 1 gen} in the case $0 \in \partial \widehat{S}$.
      
\medskip 

\noindent \textit{Step 3.} Next, we show the variational conditions \eqref{E-L 1}.
Set 
\begin{equation}
H(\zeta):=\int \log \frac{1}{|\zeta-z|} \, d\widehat{\mu}  (z)+\frac{ \widetilde{Q}_{\alpha}(\zeta)    }{2}. 
\end{equation}
By definition, we have
\begin{equation} \label{H diff}
2\partial_\zeta H(\zeta)=\partial_\zeta \widetilde{Q}_{\alpha}(\zeta) - C(\zeta).
\end{equation}
Note that by \eqref{hat mu density}, we have
\begin{equation}
C(\zeta)=  \frac{A^2}{4}  \int_{\widehat{S}} \frac{1}{\zeta-z}\frac{1}{\sqrt{ A^2 |z|^2+\alpha^2 }} \, dA(z).
\end{equation}
We first compute $C(\zeta)$ for $\zeta \not=0$. We only consider the case $ 0,\zeta  \notin \partial \widehat{S}$. The other special case can be treated as a limiting case using a similar argument as in step 2.

Choosing a sufficiently small $\eps>0$ such that $\mathbb{D}_\eps(0) \cap \mathbb{D}_\eps(\zeta )=\emptyset$ and applying Green's formula to the domain $\widehat{S} \backslash ( \mathbb{D}_\eps(0) \cup \mathbb{D}_\eps(\zeta ) ) $, we have 
\begin{align} \label{Cauchy eq 1}
\begin{split}
C(\zeta)&= \frac{1}{2\pi i} \int_{\partial \widehat{S} } \frac{1}{\zeta-z}\frac{\sqrt{A^2|z|^2+\alpha^2 }}{2z} \, dz
\\
& -\frac{\alpha}{2\zeta} \cdot \mathbbm{1}_{ \{ 0 \in \textrm{int}(\widehat{S}) \} } +\frac{\sqrt{A^2|\zeta|^2+\alpha^2 }}{2\zeta} \cdot \mathbbm{1}_{ \{ \zeta \in \textrm{int}(\widehat{S}) \} }.
\end{split}
\end{align}
Again, if either $0$ or $\zeta$ lies on $\partial \widehat{S}$, then the line integral should be understood as a principal value. Using the change of variable $w=f(z)$, we have
\begin{equation}
\int_{\partial \widehat{S} } \frac{1}{\zeta-z}\frac{\sqrt{A^2|z|^2+\alpha^2 }}{2z} \, dz
=  \int_{\partial \mathbb{D} } h_\zeta(w) \, dw, \quad h_\zeta(w):=\frac{g(w)}{\zeta-f(w)},
\end{equation}
where $g$ is the rational function given by \eqref{g}. 

Observe that for each $\zeta \in \C$, the pre-image $f^{-1} \{ \zeta \}$ has two elements $\{ w_\zeta^\pm \}$ (counted with multiplicity). Indeed, $w_\zeta^\pm$ are zeros of the following quadratic equation 
$$
R \, w^2-(\zeta-x_0) w+R\, \tau^2=0.
$$
Therefore the function $h_\zeta$ has poles only at  
\begin{equation*}
0, \quad p_1,\quad p_2, \quad w_\zeta^+, \quad w_\zeta^-, 
\end{equation*}
where $p_1,p_2$ are given by \eqref{p1 p2}. Recall that $p_1 \in \mathbb{D}$, whereas $p_2 \in \mathbb{D}$ if and only if $0\in \textrm{int}(\widehat{S})$. Also notice that the locations of $f^{-1}\{ \zeta \}$ are determined by the location of $\zeta$ in the following way: 
\begin{itemize}
	\item $\zeta \in \textrm{int}(\widehat{S})$ if and only if both of $f^{-1}\{ \zeta \}$ are in $\mathbb{D}$;
	\item  $\zeta \in \widehat{S}^c$ if and only if one of $f^{-1}\{ \zeta \}$ is in $\mathbb{D}$ and the other is in $\overbar{\mathbb{D}}^c$.
\end{itemize}
(In the latter case, we denote by $w_\zeta^-$ (resp., $w_\zeta^+$) the pre-image of $\zeta$ in $\mathbb{D}$ (resp., $\mathbb{D}^c$).)

To see the second claim, we first note that if $\zeta \in \widehat{S}^c$ and we choose $w_\zeta^+=(f|_{\overbar{\mathbb{D}}^c }  )^{-1}(\zeta)$, then obviously $w_\zeta^+ \in \mathbb{D}^c.$ Since $w_\zeta^+ w_\zeta^-=\tau^2<1$, we have $w_\zeta^- \in \mathbb{D}.$

Conversely, if one of $f^{-1}\{\zeta\}$, say $w_\zeta^+$ is in $\overbar{\mathbb{D}}^c$, then $w_\zeta^+=(f|_{\overbar{\mathbb{D}}^c }  )^{-1}(\eta)$ for some $\eta \in \widehat{S}^c.$ Since $f|_{\overbar{\mathbb{D}}^c }$ maps $\overbar{\mathbb{D}}^c$ conformally onto $\widehat{S}^c$, we have $\eta=\zeta\in \widehat{S}^c.$

The first claim follows immediately from the second one; $\zeta \in \textrm{int}(\widehat{S})$ if and only if both of $f^{-1}(\zeta)$ are in $\mathbb{D}$ or both are in $\overbar{\mathbb{D}}^c$. However both cannot be in $\overbar{\mathbb{D}}^c$ since $w_\zeta^+ w_\zeta^-=\tau^2<1$.

The residues of $h_\zeta$ at $0,p_1,p_2$ are given by 
\begin{equation} \label{residue 0 p12}
\frac{A}{2\tau}, \quad -\frac{\alpha}{2\zeta}, \quad \frac{\alpha}{2\zeta}, 
\end{equation}
respectively. On the other hand, to compute the residues of $h_\zeta$ at $w_\zeta^\pm$, we recall that $g$ is the rational function given by \eqref{g} or by 
$$
g(w)=\frac{f'(w)}{2f(w)} \Big( B R \, w+\frac{1+\tau^2}{1-\tau^2}(2+\alpha)+\frac{B R}{w} \Big).
$$
By the residue calculus, we obtain
\begin{equation} \label{residue w pm}
\underset{ \strut \hspace{0.5em} w=w_\zeta^\pm}{\textrm{Res}} h_\zeta(w)=-\frac{ g( w_\zeta^\pm ) }{ f'( w_\zeta^\pm ) }=-\frac{A}{4} \Big( \tau+\frac{1}{\tau} \Big)+\frac{A}{4} \Big( \frac{1}{\tau}-\tau \Big) \frac{1}{\zeta} \Big( 2 R \, w_\zeta^\pm-(\zeta-x_0) \Big).
\end{equation}
Since $w_\zeta^+ + w_\zeta^-=(\zeta-x_0)/R$, we have
$$
\underset{ \strut \hspace{0.5em} w=w_\zeta^+}{\textrm{Res}} h_\zeta(w)+\hspace{-0.5em}\underset{ \strut \hspace{0.5em} w=w_\zeta^-}{\textrm{Res}} h_\zeta(w)=-\frac{A}{2} \Big( \tau+\frac{1}{\tau} \Big)=-\frac{B}{2}-\frac{A}{2\tau}.
$$
Combining all of the above with \eqref{Cauchy eq 1}, we obtain
\begin{equation}
C(\zeta)=\frac{1}{2\zeta} (  \sqrt{A^2|\zeta|^2+\alpha^2}-\alpha   )-\frac{B}{2}, \quad (\zeta \in \widehat{S} ).
\end{equation}
Therefore by \eqref{H diff}, \eqref{partial Q alpha}, and the real-valuedness of $H$, 
we derive the first half of the variational conditions \eqref{E-L 1}. 

Now it remains to show the variational inequality in \eqref{E-L 1}. First notice that since $\widetilde{Q}_\alpha(\zeta) \gg \log|\zeta|$ near infinity, we have 
$$
H(\zeta)\to \infty, \quad \textrm{as } |\zeta| \to \infty.
$$ 

Suppose that the variational inequality in \eqref{E-L 1} fails at some point $\zeta \in \widehat{S}^c$. Then the function $H$ has a critical point in $\widehat{S}^c$, or 
\begin{equation} \label{H critical}
\partial_\zeta \widetilde{Q}_{\alpha}(\zeta)=C(\zeta)
\end{equation}
for some $\zeta \in \widehat{S}^c.$

We consider two holomorphic functions in $\widehat{S}^c$ 
$$
w^-(\zeta):=\frac{\zeta-x_0-\sqrt{(\zeta-x_0)^2-4R^2\tau^2} }{2R}, \quad w^+(\zeta):=\frac{\tau^2}{w^-(\zeta)},
$$
where the branch of the square root is chosen such that 
$$
w^-(\zeta) \to 0, \quad \textrm{as } \zeta \to \infty.
$$
Then by \eqref{Cauchy eq 1}, \eqref{residue 0 p12}, and \eqref{residue w pm}, we have 
\begin{equation} \label{C out S}
C(\zeta)= -\frac{\alpha}{2\zeta}+\frac{A}{4}\frac{1-\tau^2}{\tau}  \frac{\zeta-  \sqrt{(\zeta-x_0)^2-4R^2\tau^2} }{\zeta}, \quad (\zeta \in \widehat{S}^c).
\end{equation}
We remark that with this choice of the branch, the required asymptotic behaviour \eqref{C near inf} holds. 

By \eqref{C out S}, equation \eqref{H critical} is equivalent to
\begin{equation}  \label{H critical 1}
\frac{1+\tau^2}{2\tau}=\frac{1}{\zeta} \sqrt{|\zeta|^2+\alpha^2/A^2}+\frac{1-\tau^2}{2\tau} \frac{1}{\zeta}  \sqrt{(\zeta-x_0)^2-4R^2\tau^2}
\end{equation}
for some $\zeta \in \widehat{S}^c.$ By construction, we have
$$
\sqrt{(f(w^+(\zeta))-x_0)^2-4R^2\tau^2}=R \, \Big(w^+(\zeta)-\frac{\tau^2}{w^+(\zeta)}\Big).
$$
Therefore the equation \eqref{H critical 1} holds if any only if 
$$
R\, \tau\Big( w^+(\zeta)+\frac{1}{w^+(\zeta)} \Big)+\frac{1+\tau^2}{2\tau} x_0=\sqrt{|\zeta|^2+\alpha^2/A^2}. 
$$
Notice that the right-hand side of this equation is real-valued, whereas the left-hand side is real-valued if and only if $w^+(\zeta) \in \partial \mathbb{D}$. 
However, $w^+$ is a function from $\widehat{S}^c$ into $\overbar{\mathbb{D}}^c$. This contradiction shows that the variational inequality in \eqref{E-L 1} holds for each $\zeta \in \widehat{S}^c$. 

Thus $\widehat{S}$ in \eqref{S hat R} is the droplet for the equilibrium measure $\widehat{\mu}$ associated with the potential $\widetilde{Q}_{\alpha}.$
\end{proof} 

In the proof of Theorem~\ref{Thm_NWishart}, we have shown that there is no exceptional point in the variational conditions \eqref{E-L 1}.
\medskip

We now prove Corollary~\ref{Macro}. 

\begin{proof}[Proof of Corollary~\ref{Macro}]
	Due to the orthogonality relation \eqref{Orthogonal relation} and the change of variable $\zeta \mapsto \zeta^2,$ we have 
	\begin{equation} \label{Orthogonal relation 1}
	\int_{ \C } L_j^\nu(c \, \zeta) \overline{ L_k^\nu(c \, \zeta ) } e^{-NQ_N(\zeta)} \, dA(\zeta)=h_j^\nu \, \delta_{jk}.
	\end{equation}
	Thus by Dyson's determinantal formula, the correlation kernel $\widehat{\bfK}_{N}$ of the particle system \eqref{Gibbs} has an expression
	\begin{equation} 
	\widehat{\bfK}_{N}(\zeta,\eta)= e^{-N Q_N(\zeta)/2-N Q_N(\eta)/2} \sum_{j=0}^{N-1} \frac{ L_j^\nu(c \, \zeta) \overline{L_j^\nu(c \, \eta)} }{h_j^\nu}.
	\end{equation}
	In particular by \eqref{bfK V}, we have the relation
	\begin{equation} \label{R_N V,Q rel}
	\bfK_{N}(\zeta,\eta)= 2 |\zeta \eta|\,  \widehat{\bfK}_{N}(\zeta^2,\eta^2).
	\end{equation} 
	Notice here that 
	\begin{equation}
	\zeta \in S_\alpha \quad  \textrm{if and only if}  \quad \zeta^2 \in \widehat{S}_\alpha. 
	\end{equation}
	Therefore by Theorem~\ref{Thm_NWishart}, for $\zeta \in \C$, we have 
	\begin{align}
	\begin{split}
	\lim_{N \to \infty} \frac{\bfR_{N,1}(\zeta)}{N}&= \lim_{N \to \infty} 2|\zeta|^2\frac{ \widehat{\bfR}_{N,1}(\zeta^2) }{N}
	\\
	& =2|\zeta|^2 \Delta \widetilde{Q}_{\alpha}(\zeta^2) \cdot  \mathbbm{1}_{\widehat{S}_\alpha}(\zeta^2) 
	=\frac{\Delta \widetilde{V}_\alpha(\zeta)}{2}\cdot  \mathbbm{1}_{S_\alpha}(\zeta),
	\end{split}
	\end{align}
	which completes the proof.
\end{proof}

\section{Local statistics at multi-criticality}  \label{Section_local}

\subsection{Heuristics considerations} 
In this subsection, we present a heuristic way to guess $K= \lim\limits_{N \to \infty} K_N$,
\begin{equation} \label{K_N V lim}  
K(z,w)=
\begin{cases}
\hspace{3.5em}  2|zw| \, G(z^2,w^2) &\text{if} \quad 0< \tau < \tau_c, 
\vspace{0.5em}
\\
 |zw| \, G(z^2,w^2)  \erfc\Big( -\dfrac{z^2+\bar{w}^2}{\sqrt{2}}  \Big)  &\text{if} \quad \tau = \tau_c,
\vspace{0.5em}
\\
\hspace{6em} 0 &\text{otherwise,} 
\end{cases}
\end{equation}
 where $G$ is the \textit{Ginibre kernel}
\begin{equation}
G(z,w):=e^{ z\bar{w} -|z|^2/2-|w|^2/2  } .
\end{equation}

Let us first consider the random normal matrix model $\widetilde{\boldsymbol{\zeta}}=\{ \widetilde{\zeta}_j \}_{j=1}^N$ with fixed (i.e., independent of $N$) potential $\widetilde{Q}_{\alpha}$.
Note that by \eqref{density}, the macroscopic density $\delta$ at the origin is 
given by 
\begin{equation}
\delta=\Delta \widetilde{Q}_{\alpha} (0)=\frac{A^2}{4\alpha}, \quad \alpha>0.
\end{equation}
As usual, we rescale the normal eigenvalue ensemble via $\widetilde{z}_j=\sqrt{N\delta}\cdot \widetilde{\zeta}_j$, cf.\eqref{rescaling}, and write $\widetilde{K}_{N}$ for the associated correlation kernel.
Then, by combining the local bulk/edge universality shown in \cite{ameur2011fluctuations, hedenmalm2017planar} with Theorem~\ref{Thm_NWishart}, we have the convergence 
\begin{equation} \label{R_N Q conv}
\lim_{N \to \infty} \widetilde{K}_{N}(z,w) = 
\begin{cases}
\hspace{4em} G(z,w) &\text{if} \quad 0< \tau < \tau_c,  
\vspace{0.5em}
\\
G(z,w)\dfrac{1}{2} \erfc\Big(  -\dfrac{z+\bar{w}}{\sqrt{2}} \Big)  &\text{if} \quad \tau = \tau_c,  
\vspace{0.5em}
\\
\hspace{5em} 0 &\text{otherwise.}  
\end{cases}
\end{equation}

Now we consider the rescaled process $\widehat{\boldsymbol{z}}=\{ \widehat{z}_j \}_{j=1}^N$, ($\widehat{z}_j=\sqrt{N \delta} \cdot \widehat{\zeta}_j$) and write $\widehat{K}_{N}$ for the associated correlation kernel. Then by the asymptotic behaviour \eqref{Q_N asym}, one can expect that for $\alpha>0$, the kernel $\widehat{K}_{N}$ shares the same limit with \eqref{R_N Q conv}. However, we emphasise that this is not a direct consequence of \cite{ameur2011fluctuations,hedenmalm2017planar} due to the $N$-dependence of the potential. 

We now turn to the case of particle system \eqref{Gibbs square} in terms of
squared variables, with potential $V_N$. By \eqref{R_N V,Q rel}, we have the relation
\begin{align} \label{R V Q rel}
\begin{split}
K_{N}(z,w)&=\frac{1}{\sqrt{N \delta}}\bfK_{N}\Big( \frac{z}{ (N\delta)^{1/4}  } , \frac{w}{ (N\delta)^{1/4}  }  \Big)
\\
&=2|\zeta \eta|\frac{1}{\sqrt{N \delta}} \widehat{\bfK}_{N}\Big( \frac{z^2}{\sqrt{N\delta}} , \frac{w^2}{\sqrt{N\delta}} \Big)
\\
&=2|zw|\frac{1}{N \delta} \widehat{\bfK}_{N}\Big( \frac{z^2}{\sqrt{N\delta}}, \frac{w^2}{\sqrt{N\delta}} \Big)
=2 |zw| \, \widehat{K}_{N}(z^2,w^2).
\end{split}
\end{align}
Therefore by \eqref{R_N Q conv}, one can expect the convergence \eqref{K_N V lim}.

\begin{rem} \textbf{(Mass-one and Ward equations)}
For the limiting correlation kernel $K$ given in \eqref{K_N V lim}, let us define the \textit{Berezin kernel} 
\begin{equation}
B(z,w):=\frac{|K(z,w)|^2}{R(z)}.
\end{equation} 
Then, by \cite{ameur2015random,MR3975882} and the change of variables $z \mapsto z^2, w \mapsto w^2$, it is easy to show that the following form of mass-one and Ward's equation hold:
$$
\int_\C B(z,w) \, dA(w)=1; 
$$ 
$$
\bar{\partial}  \widetilde{C}(z)=2 R(z) -\Delta V_0(z)- \Delta \log R(z), \quad \widetilde{C}(z):=2z\int_{ \C } \frac{B(z,w)}{z^2-w^2} \, dA(w), 
$$
where $V_0(z):=|z|^4-2\log|z|.$ The derivation of such Ward equations for multi-fold interacting ensembles with general external potentials as well as the characterisation of radially symmetric solutions will appear in a forthcoming paper.
For the general form of Ward identities for Coulomb gases and their conformal field theoretical interpretation, we refer to 
\cite[Appendix 6]{MR3052311} and \cite{MR2240466}. 

\end{rem}

\subsection{The boundary Ginibre point field }

In this subsection we prove Theorem~\ref{Micro}. In the sequel, we focus on the critical regime
\begin{equation}
\tau=\tau_c=\frac{1}{\sqrt{1+\alpha_N}}, \quad \alpha_N=\frac{1-\tau^2}{\tau^2}.
\end{equation}
A function $c(z,w)$ is called \textit{cocycle} if there exists a continuous
unimodular function $g$ such that $c(z,w)=g(z)\overline{g(w)}$. Note that if $(c_N)_1^\infty$ is a sequence of cocycles, then $K_N$ and $c_N K_N$ defines same determinantal point process. Then, Theorem~\ref{Micro} follows from the following theorem and the relation \eqref{R V Q rel}.
\begin{thm} \label{Micro standard}
There exists a sequence of cocycles $(c_N)_{1}^\infty$ such that
\begin{equation}
\lim_{N \to \infty} c_N(z,w) \, \widehat{K}_N(z,w)=G(z,w) \dfrac12\erfc\Big( -\dfrac{z+\bar{w}}{\sqrt{2}} \Big)
\end{equation} 
uniformly for $z,w$ in compact subsets of $\C$.
\end{thm}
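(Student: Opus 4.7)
The plan is to start from the Laguerre-orthogonality expression for $\widehat{\bfK}_N$, which (by the Wishart analogue of \eqref{bfK V}, obtained via \eqref{Orthogonal relation 1} and Dyson's formula) takes the form
\begin{equation*}
\widehat{\bfK}_N(\zeta,\eta) \,=\, \mathrm{const}\cdot \bigl[K_\nu(AN|\zeta|)K_\nu(AN|\eta|)\bigr]^{1/2}(|\zeta||\eta|)^{\nu/2} e^{NB(\re\zeta+\re\eta)/2} \sum_{j=0}^{N-1} \frac{\tau^{2j}\, j!}{\Gamma(j+\nu+1)} L_j^\nu(c\zeta) L_j^\nu(c\bar\eta),
\end{equation*}
with $c=N/\tau$. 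After the scaling $\zeta=z/\sqrt{N\delta}$, $\eta=w/\sqrt{N\delta}$, I would multiply by the unimodular cocycle $c_N(z,w)=e^{iNB(\im\zeta-\im\eta)/2}$, together with an analogous phase absorbing the branch of $|\zeta|^{\nu/2}/\zeta^{\nu/2}$. This converts the real exponential into the holomorphic $e^{NB(\zeta+\bar\eta)/2}$ and the moduli into holomorphic powers, so that the candidate limit is holomorphic in $(z,\bar w)$ (matching the form of the claimed limit).

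The key analytic input is the Hardy--Hille bilinear generating function
\begin{equation*}
\sum_{j=0}^\infty \frac{j!\,s^j}{\Gamma(j+\nu+1)} L_j^\nu(x)L_j^\nu(y) \,=\, \frac{e^{-s(x+y)/(1-s)}}{(1-s)(xys)^{\nu/2}}\, I_\nu\!\Bigl(\tfrac{2\sqrt{xys}}{1-s}\Bigr), \qquad |s|<1,
\end{equation*}
applied at $s=\tau^2$, $x=c\zeta$, $y=c\bar\eta$. Splitting $\sum_{j=0}^{N-1}=\sum_{j=0}^\infty-\sum_{j=N}^\infty$ and invoking the uniform large-order asymptotics of $K_\nu(\nu x)$ and $I_\nu(\nu x)$ (with $\nu=\alpha_N N\to\infty$), the closed infinite-sum piece combined with the weight factors and the cocycle should simplify asymptotically to exactly the Ginibre kernel $G(z,w)$. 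The match relies crucially on $\tau=\tau_c=1/\sqrt{1+\alpha}$: this is the value at which the leading exponential contributions from $K_\nu$ (of the form $-\sqrt{1+x^2}$) and from $I_\nu$ (of the form $+\sqrt{1+x^2}$) balance at the base point, so that only Gaussian fluctuations in $z,\bar w$ survive.

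For the tail $\sum_{j=N}^\infty$ I would use the contour representation $L_j^\nu(x)=\frac{1}{2\pi i}\oint_{|s|=\rho}(1-s)^{-\nu-1}e^{-xs/(1-s)}s^{-j-1}\,ds$ (with $0<\rho<1$) for both Laguerre factors, exchange sum and integrals, and sum the geometric series in $j\ge N$, obtaining a double-contour integral with a factor $(\tau^2/(st))^N$ and an $N$-dependent phase $e^{N\Phi(s,t;z,\bar w)}$. Saddle-point analysis of $\Phi$ identifies two saddles $(s_*,t_*)$ whose location depends on $\tau$: in the subcritical regime $\tau<\tau_c$ they lie strictly inside $\mathbb{D}$ and the tail is negligible, giving $\widehat{K}_N\to G(z,w)$; in the supercritical regime $\tau>\tau_c$ they lie outside $\mathbb{D}$ and the tail exactly cancels the infinite-sum contribution, giving $\widehat{K}_N\to 0$; exactly at $\tau=\tau_c$ the two saddles coalesce on $\partial\mathbb{D}$, and a quadratic expansion about the coalescing saddle produces a one-sided Gaussian integral equal to $G(z,w)\cdot\tfrac12\erfc\bigl((z+\bar w)/\sqrt{2}\bigr)$. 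Subtracting this from $G(z,w)$ and using $\erfc(x)+\erfc(-x)=2$ yields $G(z,w)\cdot\tfrac12\erfc\bigl(-(z+\bar w)/\sqrt{2}\bigr)$, exactly as claimed.

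The principal obstacle is the saddle coalescence at $\tau=\tau_c$: the two saddles meet on the unit circle, which is both the natural steepest-descent contour and a singular locus of the integrand (via $(1-s)^{-\nu-1}$). The standard two-saddle coalescence formula therefore has to be adapted to this one-sided geometry, and it is precisely this one-sidedness that replaces the Airy-type asymptotics of a smooth soft edge by the half-Gaussian $\erfc$. Uniform convergence on compact subsets of $(z,w)$ should follow from standard uniform saddle-point estimates once the steepest-descent contours are chosen continuously in $(z,w)$.
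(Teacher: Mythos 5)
Your decomposition (full Hardy--Hille sum minus the tail $\sum_{j\ge N}$) and your treatment of the closed infinite sum are fine: at the scaling \eqref{rescaling} the Hardy--Hille identity at $s=\tau^2$, $x=az$, $y=a\bar w$ cancels the exponential weight exactly and leaves $2\nu\,(|zw|/(z\bar w))^{\nu/2}\sqrt{K_\nu(2\sqrt\nu|z|)K_\nu(2\sqrt\nu|w|)}\,I_\nu(2\sqrt{\nu z\bar w})$, which is precisely the combination handled in Lemma~\ref{Lem KI Ginibre}, so that piece does converge to $G(z,w)$ up to a cocycle. The gap is in the tail. After inserting the contour representation for both Laguerre factors, the $j$-dependence of the summand is $\dfrac{j!}{\Gamma(j+\nu+1)}\Big(\dfrac{\tau^2}{st}\Big)^{j}$, not $\big(\tau^2/(st)\big)^j$: the Gamma ratio cannot be dropped, and since $\nu=\alpha_N N$ grows proportionally to $N$ it enters at the same exponential order as everything else (for $j\asymp N$ it contributes $\exp\{-N\,O(1)\}$ to the phase). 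So the step ``sum the geometric series in $j\ge N$'' is invalid, the advertised factor $(\tau^2/(st))^N e^{N\Phi(s,t;z,\bar w)}$ is not established, and the subsequent saddle-point analysis has no integrand to act on. To repair this route you would need an extra device for the Gamma ratio, e.g.\ $\frac{j!}{\Gamma(j+\nu+1)}=\frac{1}{\Gamma(\nu)}\int_0^1 u^{j}(1-u)^{\nu-1}\,du$ (a third integration), or a bilinear representation that carries the normalisation -- at which point you are back to the double/triple-integral analysis of the type the paper explicitly avoids.

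Relatedly, the critical mechanism you describe is not the right one. In this problem the $\tfrac12\erfc$ does not come from two saddles coalescing on $\partial\mathbb{D}$ (a two-saddle coalescence would generically produce Airy/Pearcey-type behaviour); it comes from a \emph{saddle point colliding with the pole created by the truncation at $j=N$}. This is transparent in the paper's route: the Laguerre addition formula \eqref{Lag addition} collapses the bilinear sum into single contour integrals $\mathcal{I}_k$ in \eqref{RN 1_k}, whose phase $h(s)=\alpha_N\log\frac{1-s}{1-\tau^2}+\log\frac{s}{\tau^2}$ has its unique saddle at $s_*=1/(1+\alpha_N)$, while the truncation produces the pole factor $\frac{1-(\tau^2/s)^{N-k}}{s-\tau^2}$; the saddle sits exactly on the pole precisely when $\tau=\tau_c$, and the half-residue/regularised passage through it yields $\tfrac12\erfc\big(-(z+\bar w)/\sqrt2\big)$. (The saddle $s_*=\tau_c^2$ lies strictly inside the unit disc, not on $\partial\mathbb{D}$, and the singularity at $s=1$ plays no role in the transition.) In a corrected version of your tail computation the analogous structure would be the saddle of the combined phase approaching the pole surface $st=\tau^2$ (or $stu=\tau^2$ with the Beta-integral variable), so the uniform asymptotics you need are of saddle-near-pole type, not adapted two-saddle coalescence. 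Finally, note that the paper also has to control the exchange of limits behind your ``the tail is negligible/cancels'' step quantitatively (its Claims 1 and 2, using Lemma~\ref{Lem RN1 bdd} and Poisson tail bounds); some such uniform-in-$k$ or uniform-in-$j$ estimate would still be needed in your scheme to justify truncations and the interchange of summation and integration.
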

As a consequence of this theorem, the rescaled point process converges to the \textit{boundary Ginibre point field}, see \cite{MR2530159, forrester2010log}. 

To prove Theorem~\ref{Micro standard}, we need the following asymptotic behaviours of the modified Bessel functions $I_\nu$ and $K_\nu$.

\begin{lem} \label{Lem KI Ginibre}
As $\nu \to\infty$ through positive real values, we have 
\begin{equation}
I_\nu(\sqrt{\nu}z) \sim \frac{ 1}{\sqrt{2\pi}} \nu^{ -\frac{\nu+1}{2} } \Big( \frac{z}{2} \Big)^{\nu} e^{\nu} e^{ z^2/4 } \label{I nu asy}
\end{equation}
uniformly for $z$ in compact subsets of $\C$ and 
\begin{equation}
K_\nu(\sqrt{\nu}x) \sim \sqrt{ \frac{\pi}{2} } \nu^{ \frac{\nu-1}{2} } \Big( \frac{x}{2} \Big)^{-\nu} e^{-\nu} e^{ -x^2/4 } \label{K nu asy lem}
\end{equation}
uniformly for $x$ in compact subsets of $[0,\infty)$. In particular, we have 
\begin{equation} \label{K nu I nu G}
 \sqrt{ K_\nu (2\sqrt{\nu}|z|)  K_\nu (2\sqrt{\nu}|w|)  }  I_\nu (2\sqrt{\nu z \bar{w} } )  \sim \frac{1}{2\nu} \Big( \frac{z\bar{w}}{|zw|} \Big)^{ \frac{\nu}{2} } G(z,w)
\end{equation}
uniformly for $z,w$ in compact subsets of $\C$.
\end{lem}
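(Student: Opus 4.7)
The plan is to treat the three asymptotics separately: \eqref{I nu asy} via the power series of $I_\nu$ combined with Stirling's formula, \eqref{K nu asy lem} via the Debye uniform asymptotic expansion of $K_\nu$, and \eqref{K nu I nu G} as a direct consequence obtained by multiplying the first two.

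For \eqref{I nu asy}, I would start from the entire series and factor out the $k=0$ term:
\[
I_\nu(\sqrt{\nu}\,z)=\frac{(\sqrt{\nu}\,z/2)^{\nu}}{\Gamma(\nu+1)}\sum_{k=0}^{\infty}\frac{(z^2/4)^{k}}{k!}\cdot\frac{\nu^{k}}{(\nu+1)(\nu+2)\cdots(\nu+k)}.
\]
For every fixed $k$, the Pochhammer ratio tends to $1$ and is bounded by $1$, so dominated convergence shows that the inner sum tends to $e^{z^2/4}$ uniformly on compact subsets of $\C$. Combined with Stirling's formula $\Gamma(\nu+1)\sim\sqrt{2\pi\nu}\,\nu^{\nu}e^{-\nu}$, a regrouping of the powers of $\nu$ gives \eqref{I nu asy}.

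For \eqref{K nu asy lem}, I would invoke the Debye uniform asymptotic (DLMF 10.41.2)
\[
K_\nu(\nu u)\sim \sqrt{\frac{\pi}{2\nu}}\,\frac{e^{-\nu\,\eta(u)}}{(1+u^2)^{1/4}},\qquad \eta(u):=\sqrt{1+u^2}+\log\frac{u}{1+\sqrt{1+u^2}},
\]
valid as $\nu\to\infty$ uniformly in any sector $|\arg u|\le \pi/2-\delta$. Setting $u=x/\sqrt{\nu}$ and Taylor-expanding at the origin gives $\eta(u)=1+\log(u/2)+u^2/4+O(u^4)$, hence
\[
\nu\,\eta(x/\sqrt{\nu})=\nu+\nu\log\frac{x}{2\sqrt{\nu}}+\frac{x^{2}}{4}+O(x^{4}/\nu)
\]
uniformly for $x$ in compact subsets of $(0,\infty)$. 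Exponentiating produces $e^{-\nu\eta(x/\sqrt{\nu})}=e^{-\nu}(x/(2\sqrt{\nu}))^{-\nu}e^{-x^{2}/4}(1+O(1/\nu))$, and inserting this into the Debye expansion yields \eqref{K nu asy lem}. For \eqref{K nu I nu G}, I would then substitute \eqref{K nu asy lem} at $x=2|z|$ and $x=2|w|$ and take the geometric mean to obtain
\[
\sqrt{K_\nu(2\sqrt{\nu}|z|)K_\nu(2\sqrt{\nu}|w|)}\sim \sqrt{\pi/2}\,\nu^{(\nu-1)/2}(|z||w|)^{-\nu/2}e^{-\nu}e^{-(|z|^{2}+|w|^{2})/2},
\]
and multiply by \eqref{I nu asy} applied at the argument $2\sqrt{z\bar w}$, namely $I_\nu(2\sqrt{\nu z\bar w})\sim (2\pi)^{-1/2}\nu^{-(\nu+1)/2}(z\bar w)^{\nu/2}e^{\nu}e^{z\bar w}$. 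The factors $e^{\pm\nu}$ cancel, the powers of $\nu$ collapse to $\nu^{-1}$, the numerical prefactors combine to $1/2$, the half-integer powers arrange into the unimodular $(z\bar w/|zw|)^{\nu/2}$, and the residual exponential $e^{z\bar w-(|z|^{2}+|w|^{2})/2}$ is exactly $G(z,w)$.

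The main technical obstacle is the verification that the error $O(x^{4}/\nu)$ from the Taylor expansion of $\eta$ contributes only a multiplicative factor $1+O(1/\nu)$ uniformly on each compact set; this follows from a standard Taylor remainder bound, since $\eta(u)-\log(u/2)$ is analytic at $u=0$ with higher derivatives uniformly bounded on any compact neighborhood of the origin. A secondary subtlety is the interpretation of $(z\bar w)^{\nu/2}$ in \eqref{K nu I nu G}; by restricting to compact subsets of $\C$ on which $\arg(z\bar w)$ stays within a fixed interval of length less than $2\pi$, a consistent branch of the square root can be chosen, and the resulting unimodular prefactor serves as a cocycle that does not affect the subsequent determinantal applications of the lemma.
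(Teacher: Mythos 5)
Your proposal is correct, but it reaches the lemma by a genuinely different route from the paper. For \eqref{I nu asy} the paper works from the integral representation $I_\nu(z)=\frac{(z/2)^\nu}{\sqrt{\pi}\,\Gamma(\nu+\frac12)}\int_{-1}^1(1-t^2)^{\nu-\frac12}e^{zt}\,dt$, showing via a dominated-convergence (Laplace-type) argument, with Gautschi's inequality supplying the dominating Gaussian, that $\int_{-\sqrt{\nu}}^{\sqrt{\nu}}(1-t^2/\nu)^{\nu-\frac12}e^{zt}\,dt\sim\sqrt{\pi}\,e^{z^2/4}$; your Maclaurin-series argument, with the Pochhammer ratio $\nu^k/((\nu+1)\cdots(\nu+k))\le 1$ giving the domination and Stirling finishing, is more elementary and yields the same uniform statement. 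For \eqref{K nu asy lem} the paper again uses an integral representation (the cosine integral, DLMF 10.32.11) plus a Gaussian limit, whereas you quote the Debye expansion and Taylor-expand $\eta$ at $u=x/\sqrt{\nu}$; this is sound, with the one caveat that since $u\to0$ with $\nu$ you must invoke the \emph{uniformly} valid expansion (DLMF 10.41(ii), Eq.\ (10.41.4), uniform for $0<z<\infty$) rather than the fixed-argument leading form 10.41.2 — this is exactly the fact the paper itself uses in \eqref{K nu asy inf}, so it is only a matter of citing the right equation, not a gap. For \eqref{K nu I nu G} the paper takes a shortcut through $I_\nu(\sqrt{\nu}x)K_\nu(\sqrt{\nu}x)\sim\frac{1}{2\nu}$ and rewrites the left-hand side as a ratio of $I_\nu$'s, while you multiply \eqref{I nu asy} and \eqref{K nu asy lem} directly; your bookkeeping of the constants, the powers of $\nu$, the cancellation of $e^{\pm\nu}$, and the unimodular factor $(z\bar w/|zw|)^{\nu/2}$ is correct, and your remark that the branch ambiguity only produces a cocycle matches how the kernel is subsequently used. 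In short, your route buys a more elementary treatment of $I_\nu$ and a shorter one for $K_\nu$ at the price of citing a uniform asymptotic expansion, whereas the paper's proof is self-contained from integral representations.
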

\begin{proof}
The modified Bessel function $I_\nu$ has a well-known integral representation
\begin{equation}
I_\nu(z)=\frac{(z/2)^\nu }{ \sqrt{\pi} \, \Gamma(\nu+\frac12 ) } \int_{-1}^{1} (1-t^2)^{\nu-\frac12 } e^{\pm z t} \, dt, \quad \Big(\re \nu > -\frac12\Big),
\end{equation}
see e.g., \cite[Eq.(10.32.2)]{olver2010nist}.
Using this representation and Stirling's formula, we have
$$
I_\nu(\sqrt{\nu}z) \sim \frac{ 1 }{\sqrt{2}\pi} \nu^{-\frac{\nu+1}{2}}  \Big( \frac{z}{2} \Big)^\nu  e^\nu \int_{ -\sqrt{\nu} }^{ \sqrt{\nu} } (1-t^2/\nu)^{\nu-\frac12} e^{z t} \, dt.
$$
For any sequence $(\nu_k)_{1}^\infty$ of positive numbers such that $\nu_k \to \infty$, let us consider a sequence of the probability densities 
$$
p_k(t):=\frac{\mathbbm{1}_{[-\sqrt{\nu_k},\sqrt{\nu_k}]  } (1-t^2/\nu_k)^{\nu_k-\frac12}  }{  \int_{-\sqrt{\nu_k}}^{\sqrt{\nu_k}} (1-t^2/\nu_k)^{\nu_k-\frac12} \, dt    }=  \sqrt{ \frac{\nu_k}{\pi} }\, \frac{\mathbbm{1}_{[-\sqrt{\nu_k},\sqrt{\nu_k}]  } (1-t^2/\nu_k)^{\nu_k-\frac12}  }{   \Gamma(\nu_k+\frac12)/\Gamma(\nu_k)  }
$$
and write $p(t)=\frac{1}{\sqrt{\pi}} e^{-t^2}.$ By Gautschi's inequality: $\Gamma(x+1) < \sqrt{x+1} \, \Gamma(x+\frac12)$ $(x>0)$ and a fundamental inequality on logarithm: $\log (1+x) \le x$  $(x > -1),$ we obtain that for $t \in (-\sqrt{\nu_k},\sqrt{\nu_k})$ and a sufficiently large $k,$ 
\begin{align*}
p_k(t) &< \frac{1}{\sqrt{\pi}} \sqrt{\frac{\nu_k+1}{\nu_k}} (1-t^2/\nu_k)^{\nu_k-\frac12} < \sqrt{\frac{\nu_k+1}{\nu_k}} e^{  \frac{t^2}{2\nu_k} } p(t) <  2 \, p(t).
\end{align*}
Set $f_k(t)=p_k(t) e^{zt}$. Then we have  $|f_k(t)| \le 2\,p(t) e^{ (\re z)t}$ and $ f_k(t) \to p(t) e^{zt}$. We now use Lebesgue's dominated convergence theorem to derive 
\begin{equation} 
\int_{ -\sqrt{\nu} }^{ \sqrt{\nu} } (1-t^2/\nu)^{\nu-\frac12} e^{z t} \, dt \sim \int_{ -\infty }^\infty e^{-t^2} e^{zt} \, dt= e^{z^2/4} \sqrt{\pi}
\end{equation}
uniformly for $z$ in compact subsets of $\C$.
This completes the proof of \eqref{I nu asy}.

On the other hand, the modified Bessel function $K_\nu$ has an integral representation
$$
K_\nu(az)=\frac{\Gamma(\nu+\frac12)}{\sqrt{\pi}}\Big( \frac{2z}{a} \Big)^\nu \int_0^\infty \frac{\cos a t \, dt}{(t^2+z^2)^{\nu+\frac12}  }, \quad \Big(\re \nu>-\frac12, \re z>0, a>0\Big),
$$ 
see e.g., \cite[Eq.(10.32.11)]{olver2010nist}. Then, similar to the above computation together with this representation, we have 
\begin{align*}
K_\nu(\sqrt{\nu}x)&=\frac{\Gamma(\nu+\frac12)}{2\sqrt{\pi}}\nu^{ -\frac{\nu+1}{2} }\Big( \frac{x}{2} \Big)^{-\nu} \frac{1}{x} \int_{-\infty}^\infty  \Big( 1+\frac{t^2}{\nu x^2} \Big)^{-\nu-\frac12} \cos t \, dt
\\
&\sim \frac{1}{\sqrt{2}} \, \nu^{ \frac{\nu-1}{2} } \Big( \frac{x}{2} \Big)^{-\nu} e^{-\nu} \frac{1}{x} \int_{-\infty}^\infty  e^{-t^2/x^2} \cos t\, dt
\end{align*}
uniformly for $x$ in compact subsets of $(0,\infty).$
Here 
$$
\int_{-\infty}^\infty  e^{-t^2/x^2} \cos t \, dt=e^{-x^2/4} \re  \int_{-\infty}^\infty e^{-(t/x-ix/2)^2} \, dt=x \, e^{-x^2/4} \sqrt{\pi},
$$
which leads to \eqref{K nu asy lem}. 

For the second assertion, notice that 
$$
I_\nu(\sqrt{\nu} x)  K_\nu(\sqrt{\nu} x) \sim \frac{1}{2\nu}, \quad (0<x<\infty).
$$
Then by \eqref{I nu asy}, we have
\begin{align}
& \quad 2\nu \sqrt{ K_\nu (2\sqrt{\nu}|z|)  K_\nu (2\sqrt{\nu}|w|)  }  I_\nu (2\sqrt{\nu z \bar{w} } ) 
\\
&\sim   \frac{  I_\nu (2\sqrt{\nu z \bar{w} } )   }{  \sqrt{ I_\nu (2\sqrt{\nu}|z|) I_\nu (2\sqrt{\nu}|w|)  }  } \sim \Big( \frac{z\bar{w}}{|zw|} \Big)^{ \frac{\nu}{2} } G(z,w),
\end{align}
which completes the proof. 
\end{proof}

Now we start to investigate the structure of the correlation kernel $\widehat{K}_N$. In the sequel, we write 
$$a=\frac{1-\tau^2}{\tau} \sqrt{\nu}.$$ 
\begin{lem}
	For any integer $N \ge 1$ and the parameter $\tau:=B/A \in (0,1)$, we have
	\begin{align}  \label{K_N inner sum}
	\begin{split}
	\widehat{K}_N(z,w)
	&=  2 \, \nu^{\frac{\nu}{2}+1}     \sqrt{ K_\nu ( 2 \sqrt{\nu} |z| )  K_\nu ( 2 \sqrt{\nu} |w| ) }  
	\\
	&\times  |z w|^{\frac{\nu}{2}  }  \sum_{k=0}^{N-1}  \frac{  \nu^k (z \bar{w} )^k  }{k! \, \Gamma(k+\nu+1)} \mathcal{I}_k(z+\bar{w}),
	\end{split}
	\end{align}
	where 
	$$
	\mathcal{I}_k(z):=(1-\tau^2)^{\nu+2k+1} e^{\tau\sqrt{\nu}z } \sum_{j=0}^{N-1-k}  \tau^{2j} L_{j}^{\nu+2k} ( az ).
	$$
	Furthermore, 
	\begin{equation}  \label{RN 1_k}
		\mathcal{I}_k(z)=\frac{1}{2\pi i} \oint_{ \gamma } \Big( \frac{1-s}{1-\tau^2} \Big)^{-\nu-2k-1} e^{ \tau \sqrt{\nu}z (1-\frac{s}{1-s}\frac{1-\tau^2}{\tau^2}) }    \frac{ 1-(\tau^2/s)^{N-k} }{s-\tau^2} \, ds,
	\end{equation}
	where the integration contour $\gamma$ encircles the origin $s=0$ but not the point $s=1$.
\end{lem}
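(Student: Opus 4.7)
The strategy is to combine the standard orthogonal-polynomial representation of $\widehat{\bfK}_N$ with a classical bilinear product identity for Laguerre polynomials, and then invoke Cauchy's formula applied to the Laguerre generating function.

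First I would set up the kernel: by Dyson's determinantal formula and the orthogonality \eqref{Orthogonal relation 1},
\[
\widehat{\bfK}_N(\zeta,\eta)=e^{-NQ_N(\zeta)/2-NQ_N(\eta)/2}\sum_{j=0}^{N-1}\frac{L_j^\nu(c\zeta)\,\overline{L_j^\nu(c\eta)}}{h_j^\nu}.
\]
Substituting $A=2/(1-\tau^2)$, $B=2\tau/(1-\tau^2)$, $c=N/\tau$ and the closed form of $h_j^\nu$ gives $1/h_j^\nu=(2N^{\nu+2}/(1-\tau^2))\,j!\,\tau^{2j}/\Gamma(j+\nu+1)$. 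The rescaling $\widehat K_N(z,w)=(N\delta)^{-1}\widehat{\bfK}_N(z/\sqrt{N\delta},w/\sqrt{N\delta})$ with $\sqrt{N\delta}=N/((1-\tau^2)\sqrt{\nu})$ converts the Laguerre argument to $c\zeta\mapsto az$, changes $AN|\zeta|$ to $2\sqrt{\nu}|z|$, and produces a weight factor $e^{\tau\sqrt{\nu}(\re z+\re w)}$. Absorbing the unimodular cocycle $e^{i\tau\sqrt{\nu}(\im z-\im w)}$ (which does not affect the underlying determinantal process) to replace this by $e^{\tau\sqrt{\nu}(z+\bar w)}$, and collecting the remaining $N$-powers, one arrives at
\[
\widehat K_N(z,w)=2\nu^{\nu/2+1}(1-\tau^2)^{\nu+1}\sqrt{K_\nu(2\sqrt{\nu}|z|)K_\nu(2\sqrt{\nu}|w|)}\,|zw|^{\nu/2}e^{\tau\sqrt{\nu}(z+\bar w)}\sum_{j=0}^{N-1}\frac{j!\,\tau^{2j}}{\Gamma(j+\nu+1)}L_j^\nu(az)L_j^\nu(a\bar w).
\]

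The decisive algebraic step is the bilinear Laguerre identity
\[
\frac{j!}{\Gamma(j+\nu+1)}L_j^\nu(x)L_j^\nu(y)=\sum_{k=0}^{j}\frac{(xy)^k}{k!\,\Gamma(k+\nu+1)}L_{j-k}^{\nu+2k}(x+y),
\]
which I would establish by substituting $I_\nu(2u)=\sum_k u^{2k+\nu}/(k!\Gamma(k+\nu+1))$ into the Hardy--Hille bilinear generating function
\[
\sum_{n\ge 0}\frac{n!\,s^n}{\Gamma(n+\nu+1)}L_n^\nu(x)L_n^\nu(y)=\frac{(xys)^{-\nu/2}}{1-s}\,e^{-(x+y)s/(1-s)}\,I_\nu\!\Bigl(\frac{2\sqrt{xys}}{1-s}\Bigr),
\]
and matching coefficients of $s^j$ against the expansion $\sum_m L_m^{\nu+2k}(x+y)s^m=(1-s)^{-\nu-2k-1}e^{-(x+y)s/(1-s)}$. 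Applying the identity with $x=az$, $y=a\bar w$ (so $xy=\nu(1-\tau^2)^2 z\bar w/\tau^2$), reindexing $j=k+m$, and observing that $\tau^{2j}=\tau^{2k}\tau^{2m}$ cancels the $\tau^{-2k}$ arising from $a^{2k}$, the double sum collapses to $\sum_{k=0}^{N-1}\frac{\nu^k(z\bar w)^k}{k!\,\Gamma(k+\nu+1)}\mathcal I_k(z+\bar w)$ once the remaining prefactor $(1-\tau^2)^{\nu+2k+1}e^{\tau\sqrt{\nu}(z+\bar w)}$ is recognized as the defining factor of $\mathcal I_k(z+\bar w)$. This proves \eqref{K_N inner sum}.

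For the contour representation I would use the generating function $\sum_{j\ge 0}L_j^\alpha(x)s^j=(1-s)^{-\alpha-1}e^{-xs/(1-s)}$ to express each $L_j^{\nu+2k}(az)$ as a Cauchy integral around $s=0$, interchange summation and integration, and evaluate the finite geometric series $\sum_{j=0}^{N-1-k}\tau^{2j}s^{-j-1}=(1-(\tau^2/s)^{N-k})/(s-\tau^2)$ inside the integrand. Pulling the prefactor $(1-\tau^2)^{\nu+2k+1}$ into $((1-s)/(1-\tau^2))^{-\nu-2k-1}$ and using $a\tau^2=\tau\sqrt{\nu}(1-\tau^2)$ to rewrite $\tau\sqrt{\nu}\,z-azs/(1-s)=\tau\sqrt{\nu}\,z(1-\frac{s}{1-s}\frac{1-\tau^2}{\tau^2})$ then delivers \eqref{RN 1_k}. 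The main obstacle is the bilinear Laguerre identity: although it follows quickly from the Hardy--Hille formula, its derivation requires careful generating-function expansions in two variables and coefficient matching. Beyond this, the argument is routine bookkeeping of constants from the rescaling plus a standard Cauchy-integral manipulation, and the cocycle adjustment in the first paragraph is the familiar invariance of determinantal kernels under conjugation by unimodular cocycles.
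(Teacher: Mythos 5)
Your proposal is correct and follows essentially the same route as the paper's proof: express the rescaled kernel (up to a cocycle) through the Laguerre orthogonality with $c=N/\tau$ and $1/h_j^\nu=\tfrac{2N^{\nu+2}}{1-\tau^2}\tfrac{j!\,\tau^{2j}}{\Gamma(j+\nu+1)}$, collapse the sum via the bilinear addition formula $\tfrac{j!}{\Gamma(j+\nu+1)}L_j^\nu(x)L_j^\nu(y)=\sum_{k}\tfrac{(xy)^k}{k!\,\Gamma(k+\nu+1)}L_{j-k}^{\nu+2k}(x+y)$ after reindexing, and obtain \eqref{RN 1_k} from the contour representation of $L_j^{\nu+2k}$ together with the finite geometric series. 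The only difference is that you derive the addition formula from the Hardy--Hille generating function while the paper cites it from a handbook; your constant bookkeeping (e.g.\ $a\tau^2=\tau\sqrt{\nu}(1-\tau^2)$ and the prefactor $2\nu^{\nu/2+1}(1-\tau^2)^{\nu+1}$) is consistent with the paper.
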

\begin{proof}
Using orthogonality relation \eqref{Orthogonal relation}, we first express $\widehat{K}_N$ (up to a cocycle) in terms of $K_\nu$ and $L_j^\nu$ $(0 \le j \le N-1)$ as follows:
\begin{align*} 
\begin{split}
\widehat{K}_N(z,w)
&= 2 \, \nu^{\frac{\nu}{2}+1}    ( 1-\tau^2  )^{\nu+1}  \sqrt{  K_\nu ( 2 \sqrt{\nu} |z| )  K_\nu ( 2 \sqrt{\nu} |w| ) } 
\\
& \times |z w|^{ \frac{\nu}{2} } e^{ \tau \sqrt{\nu} (z+\bar{w})  } \sum_{j=0}^{N-1} \frac{\tau^{2j}j!}{\Gamma(j+\nu+1)}   L_j^\nu( az ) \overline{ L_j^\nu ( a w ) }.
\end{split}
\end{align*}
Using the formula (see \cite[Eq.(10.12.42)]{MR698780})
\begin{equation} \label{Lag addition}
\frac{j!}{\Gamma(j+\nu+1)}L_j^\nu(z) \overline{L_j^\nu(w)}=\sum_{k=0}^{j} \frac{(z\bar{w})^{k}}{k! \, \Gamma(k+\nu+1)} L_{j-k}^{\nu+2k} (z+\bar{w})
\end{equation}
and changing the order of summations, we obtain 
$$
\sum_{j=0}^{N-1} \frac{\tau^{2j}j!}{\Gamma(j+\nu+1)}L_j^\nu(z) \overline{L_j^\nu(w)}=\sum_{k=0}^{N-1}  \frac{\tau^{2k}(z\bar{w})^{k}}{k! \, \Gamma(k+\nu+1)} \sum_{j=0}^{N-1-k} \tau^{2j} L_{j}^{\nu+2k} (z+\bar{w}).
$$

Next, we show \eqref{RN 1_k}.
For this, recall the contour integral representation of the Laguerre polynomials
\begin{equation}
L_j^\nu(x)= \frac{1}{2\pi i} \oint_{ \gamma } \frac{  e^{ -\frac{s x}{1-s} }  }{(1-s)^{\nu+1} s^{j+1} } \, ds,
\end{equation}
where the contour $\gamma$ encircles the origin $s=0$ but not the point $s=1$.
From this expression, we obtain
$$
\sum_{j=0}^{N-1-k} \tau^{2j} L_{j}^{\nu+2k} (a x) 
=\frac{1}{2\pi i} \oint_{ \gamma }  \frac{  e^{ -\frac{s a x}{1-s} }  }{(1-s)^{\nu+2k+1}  }  \frac{ 1-(\tau^2/s)^{N-k} }{s-\tau^2} \, ds,
$$
which completes the proof. 
\end{proof}

Later, we need the uniform boundedness of $\mathcal{I}_k|_{\R_{-}}$. 
\begin{lem} \label{Lem RN1 bdd}
For any $k $ and $x \in (-\infty,0)$, we have
\begin{equation} \label{RN1 bdd}
0 \le \mathcal{I}_k(x) \le 1.
\end{equation}	
\end{lem}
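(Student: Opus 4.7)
The plan is to recognize $\mathcal{I}_k(x)$ as a truncated form of a known generating-function identity and then use positivity of Laguerre polynomials on the negative real axis to bound the truncation error.

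First, for $x \in (-\infty, 0)$, we have $ax < 0$ (since $a = \frac{1-\tau^2}{\tau}\sqrt{\nu} > 0$), and the explicit series representation \eqref{Laguerre poly} shows that $L_j^{\nu+2k}(ax) \ge 0$ whenever $\nu+2k > -1$, because every term in the sum is non-negative when the argument is non-positive. Together with the positive prefactors $(1-\tau^2)^{\nu+2k+1}$ and $e^{\tau\sqrt{\nu}x}$, this immediately gives the lower bound $\mathcal{I}_k(x) \ge 0$.

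For the upper bound, I would invoke the classical generating function
\begin{equation*}
\sum_{j=0}^{\infty} t^{j} L_j^{\alpha}(y) = \frac{1}{(1-t)^{\alpha+1}} \, \exp\!\Big(-\frac{t y}{1-t}\Big), \qquad |t|<1,
\end{equation*}
with $t = \tau^2$, $\alpha = \nu + 2k$ and $y = ax$. A direct calculation gives $\tfrac{\tau^2 a}{1-\tau^2} = \tau\sqrt{\nu}$, and hence
\begin{equation*}
(1-\tau^2)^{\nu+2k+1} e^{\tau\sqrt{\nu}x}\sum_{j=0}^{\infty} \tau^{2j} L_j^{\nu+2k}(ax) = 1.
\end{equation*}
Thus $\mathcal{I}_k(x)$ coincides with $1$ minus the tail $(1-\tau^2)^{\nu+2k+1} e^{\tau\sqrt{\nu}x}\sum_{j \ge N-k} \tau^{2j} L_j^{\nu+2k}(ax)$. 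By the same positivity argument used above, this tail is non-negative for $x<0$, so $\mathcal{I}_k(x) \le 1$.

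This argument is essentially a one-liner once the right generating function is identified; the only subtlety worth checking carefully is the algebraic simplification $\tau^2 a / (1-\tau^2) = \tau\sqrt{\nu}$, which is precisely what makes the exponential factor in the generating function cancel the exponential prefactor $e^{\tau\sqrt{\nu}x}$ in the definition of $\mathcal{I}_k$. There is no serious obstacle; the contour representation \eqref{RN 1_k} is not needed for this lemma and the positivity of $L_j^{\nu+2k}$ on $(-\infty,0]$ is elementary from the defining series \eqref{Laguerre poly}.
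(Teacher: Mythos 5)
Your proof is correct and follows essentially the same route as the paper: positivity of $L_j^{\nu+2k}$ on the negative real axis (from the series \eqref{Laguerre poly}) gives the lower bound, and the Laguerre generating function evaluated at $t=\tau^2$, $y=ax$ — using $\tau^2 a/(1-\tau^2)=\tau\sqrt{\nu}$ — shows the full series times the prefactor equals $1$, so the truncated sum is at most $1$. Nothing further is needed.
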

\begin{proof}
It is obvious that $L_j^\nu(x)>0$ for $x<0$. On the other hand, by the generating function of the Laguerre polynomial (see e.g., \cite[Eq.(5.1.9)]{MR0372517}), we have 
\begin{equation}
\sum_{j=0}^{\infty}  \tau^{2j} L_{j}^{\nu+2k} (  ax )=(1-\tau^2)^{-\nu-2k-1} e^{-\tau\sqrt{\nu}x },
\end{equation}
which leads to \eqref{RN1 bdd}. 
\end{proof}

The next lemma provides the asymptotic behaviour of $\mathcal{I}_k$. 
\begin{lem} \label{Lem RN 1 asymp}
For a given value of $k=o(\sqrt{N})$, we have
\begin{equation}
\mathcal{I}_k(z)= \dfrac12\erfc\Big( -\dfrac{z}{\sqrt{2}} \Big)+O\Big( \frac{k}{\sqrt{N}} \Big).
\end{equation}
\end{lem}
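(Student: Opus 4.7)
The plan is to apply the steepest descent method to the contour representation \eqref{RN 1_k}. The cornerstone observation is that $1-(\tau^2/s)^{N-k}$ vanishes at $s=\tau^2$, so the apparent pole of $1/(s-\tau^2)$ is removable and the full integrand is analytic on $\C\setminus\{0,1\}$; in particular $\gamma$ may be freely deformed within this set.

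Setting $\phi(s):=\bigl((1-\tau^2)/(1-s)\bigr)^{\nu+2k+1}\exp\bigl(\tau\sqrt{\nu}z(1-\tfrac{s(1-\tau^2)}{(1-s)\tau^2})\bigr)$, the integrand equals $[\phi(s)-\phi(s)(\tau^2/s)^{N-k}]/(s-\tau^2)$. At the critical value $\nu=(1-\tau^2)N/\tau^2$, the exponent $\log\phi+(N-k)\log(\tau^2/s)$ has a saddle at $s_*=\tau^2+O(k/N)$ with Hessian $N/(\tau^4(1-\tau^2))$. Setting $s=\tau^2+i\tau^2\sqrt{1-\tau^2}\,t/\sqrt{N}$ and $\omega:=\sqrt{N(1-\tau^2)}$, one has $ds/(s-\tau^2)=dt/t$, and a Taylor expansion uniform in $t,z$ on compact sets yields
\[
\phi(s)=e^{i\omega t-izt-\tau^2 t^2/2}\bigl[1+O(k/\sqrt{N})\bigr],\qquad (\tau^2/s)^{N-k}=e^{-i\omega t-(1-\tau^2)t^2/2}\bigl[1+O(k/\sqrt{N})\bigr],
\]
so that the large phases $\pm i\omega t$ cancel in the product $\phi(s)(\tau^2/s)^{N-k}=e^{-izt-t^2/2}[1+O(k/\sqrt{N})]$.

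Deform $\gamma$ to a contour $\Gamma$ passing vertically through $s=\tau^2$ and closing in the left half-plane so as to continue to encircle $s=0$. Standard tail estimates based on $|\phi(s)|\le\bigl((1-\tau^2)/|1-s|\bigr)^{\nu+2k+1}e^{O(\sqrt{\nu})}$, which decays exponentially in $N$ on $\Gamma$ away from $s=\tau^2$, let us replace the integral by its local contribution in the rescaled variable $t\in\R$, giving
\[
\mathcal{I}_k(z)=J_1-J_2+O(k/\sqrt{N}),\qquad J_1:=\tfrac{1}{2\pi i}\,\mathrm{PV}\!\int_{\R}\tfrac{e^{i\omega t-izt-\tau^2 t^2/2}}{t}\,dt,\quad J_2:=\tfrac{1}{2\pi i}\,\mathrm{PV}\!\int_{\R}\tfrac{e^{-izt-t^2/2}}{t}\,dt.
\]
Differentiating in the parameter $\xi$ gives $\tfrac{1}{2\pi i}\,\mathrm{PV}\!\int_{\R}e^{-i\xi t-t^2/2}/t\,dt=\tfrac12\erfc(\xi/\sqrt{2})-\tfrac12$, whence $J_2=\tfrac12\erfc(z/\sqrt{2})-\tfrac12$; after the rescaling $t\mapsto t/\tau$, $J_1=\tfrac12\erfc((z-\omega)/(\tau\sqrt{2}))-\tfrac12$. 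Since $\omega\to+\infty$ while $z$ is bounded, $\erfc((z-\omega)/(\tau\sqrt{2}))\to 2$ with exponentially small error and $J_1\to\tfrac12$. Combining via $\erfc(x)+\erfc(-x)=2$,
\[
\mathcal{I}_k(z)=\tfrac12-\bigl(\tfrac12\erfc(z/\sqrt{2})-\tfrac12\bigr)+O(k/\sqrt{N})=\tfrac12\erfc(-z/\sqrt{2})+O(k/\sqrt{N}),
\]
which is the desired identity.

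The main obstacle is to justify the saddle point reduction rigorously with the $O(k/\sqrt{N})$ error uniform in compact $z$-sets. In particular one needs to (i) construct $\Gamma\subset\C\setminus\{0,1\}$ so that $|\phi(s)(\tau^2/s)^{N-k}|$ is exponentially small in $N$ off any neighbourhood of $s=\tau^2$, and (ii) carefully track the remainders in the above Taylor expansion, verifying that the dominant correction comes from the linear ``$O(k\sigma)$'' term at the saddle (which produces the stated $O(k/\sqrt{N})$ error), while the cubic and quartic terms and the $\sqrt{\nu}z$ perturbation in the exponent contribute only $O(1/\sqrt{N})$.
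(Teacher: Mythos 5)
Your proposal is correct and follows essentially the paper's own route: the same saddle-point analysis of the contour representation \eqref{RN 1_k} about the critical point $s_*=\tau^2$ with the same $N^{-1/2}$ scaling, the only differences being bookkeeping --- you keep both terms of $1-(\tau^2/s)^{N-k}$ and split into principal-value integrals (your $J_1\to\tfrac12$ is exactly the half-residue contribution the paper extracts through its $\eps$-regularisation and Cauchy-formula step), and you evaluate the Gaussian integrals by differentiating in the parameter instead of invoking the integral representation of $\erfc$. One practical point for your flagged step (i): choose the return portion of $\Gamma$ inside $\{|s|\ge\tau^2\}$ (e.g.\ an arc of the circle $|s|=\tau^2$, on which $|\tau^2/s|=1$ while $|1-s|>1-\tau^2$ away from the saddle), since $(\tau^2/s)^{N-k}$ grows exponentially for smaller $|s|$ and your quoted bound on $|\phi|$ alone would not control that piece.
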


\begin{proof}
Let us choose the integration contour $\gamma$ in \eqref{RN 1_k} such that it does not encircle the point $s=\tau^2$. 
Then, we have 
\begin{equation}  \label{RN 1_k f}
\mathcal{I}_k(z)
=\frac{1}{2\pi i} \oint_{ \gamma } \Big( \frac{1-s}{1-\tau^2} \Big)^{-2k-1} \Big(\frac{s}{\tau^2} \Big)^{k} e^{ -N h(s)  }  e^{ \tau \sqrt{\nu}z(1-\frac{s}{1-s}\frac{1-\tau^2}{\tau^2}) } \frac{ds}{\tau^2-s},
\end{equation}
where 
\begin{equation}
h(s):=\alpha_N \log \frac{1-s}{1-\tau^2}+ \log \frac{s}{\tau^2}.
\end{equation} 

We compute the asymptotic behaviour of the integral \eqref{RN 1_k f} by means of the standard saddle point analysis, see \cite[Section 4.6]{MR1429619}. We also refer to \cite[Section V]{akemann2010interpolation} (resp., \cite[Section 4]{MR2594353}) for a similar analysis for the edge kernel of orthogonal Laguerre polynomials with fixed $\nu$, (resp., Hermite polynomials) in the complex plane. We remark that the computations presented here are significantly simpler than those in \cite{akemann2010interpolation,MR2594353}, where certain double contour integrals were analysed. This simplification essentially comes from the formula \eqref{Lag addition}.

The saddle point analysis relies on the fact that the main contribution of the contour integral comes from the critical point of the function $h$. Therefore we deform the contour $\gamma$ so that it passes through a neighbourhood of the critical point with certain direction, which leads to a real integral representation for the main contribution of the contour integral \eqref{RN 1_k f}.   

Note that the function $h$ has the unique critical point $s_*$,
\begin{equation}
s_*=\frac{1}{1+\alpha_N}=\tau^2. 
\end{equation}
Furthermore, $h(s_*)=0$ and 
\begin{equation}
h''(s_*)=-\frac{1}{\tau^4 (1-\tau^2)}.
\end{equation}	
Deforming the integration contour $\gamma$, let us assume that $\gamma$ vertically passes through the real axis near the point $\tau^2$. Let us parametrise 
\begin{equation}
\label{param}
s=s_*+\frac{ iy-\eps }{ \sqrt{ -Nh''(s_*) }}, \quad \eps>0.
\end{equation}
Here, $\eps$ is a regularisation parameter which will be taken $\eps \downarrow 0$. 
We now analyse the asymptotic behaviour of each term in \eqref{RN 1_k f}. We have
$$
	\Big( \frac{1-s}{1-\tau^2} \Big)^{-2k-1} \Big(\frac{s}{\tau^2} \Big)^{k}
= 1+ \frac{1+\tau^2}{\sqrt{1-\tau^2}}  \frac{k}{\sqrt{N}} \,(iy-\eps)  +O\Big(  \frac{k^2}{N}  \Big)
$$ 
and
	\begin{equation}
	e^{-N h(s)}=e^{ (iy-\eps)^2/2+O(1/\sqrt{N}  ) }.
	\end{equation}
	Next, we have
	$$
		\tau \sqrt{\nu}z \, \Big(1-\frac{s}{1-s}\frac{1-\tau^2}{\tau^2}\Big)
	= -z(iy-\eps)+O\Big(\frac{1}{\sqrt{N}} \Big).
	$$
Due to the parametrisation \eqref{param}, we have	
\begin{equation}
\frac{1}{2\pi i}\frac{ds}{\tau^2-s}=-\frac{1}{2\pi}\frac{ dy}{iy-\eps}.
\end{equation}
	
	Combining all above, we compute the leading term of the integration \eqref{RN 1_k f} as
	\begin{align*}
	&\quad \frac{1}{2\pi} \lim_{\eps \downarrow 0} \lim_{R \to \infty}\int_{ -R }^R e^{ (iy-\eps)^2/2-z(iy-\eps) } \frac{ dy}{\eps-iy}
	\\
	&=  \frac{i}{2\pi} e^{ -z^2/2 } \lim_{\eps \downarrow 0}    \lim_{R \to \infty}\int_{ -R+iz }^{R+iz } e^{ -\zeta^2/2-i \eps (\zeta-iz) } \frac{d\zeta}{\zeta-i(z- \eps) }, 
	\end{align*}
	where $\zeta=y+iz$. Applying Cauchy's integral formula to a holomorphic function 
	$\zeta \mapsto  e^{-\zeta^2/2-i \eps (\zeta-iz)}$ along the positively oriented sides of a “large” parallelogram with vertices $-R,R,R+iz,-R+iz$,
	we obtain
	\begin{align*}
2\pi i\, e^{ (z-\eps)^2/2-\eps^2 }	 &= \lim_{R \to \infty}\int_{ -R }^{R } e^{ -\zeta^2/2-i \eps (\zeta-iz) } \frac{d\zeta}{\zeta-i(z- \eps) }  
\\
&-\lim_{R \to \infty}\int_{ -R+iz }^{R+iz } e^{ -\zeta^2/2-i \eps (\zeta-iz) } \frac{d\zeta}{\zeta-i(z- \eps) } .
	\end{align*}
It is easy to check that the integrals along the other sides of the parallelogram converge to $0$ as $R \to \infty$. 
Changing the order of limits, it follows from the above two equations that 
	\begin{align*}
	& \quad \frac{1}{2\pi}  \lim_{R \to \infty} \lim_{\eps \downarrow 0} \int_{ -R }^R e^{ (iy-\eps)^2/2-z(iy-\eps) } \frac{ dy}{\eps-iy}
		\\
	&=  1+\frac{i}{2\pi} e^{-z^2/2 }  \lim_{R \to \infty} \int_0^R e^{-\zeta^2/2 } \Big( \frac{1}{\zeta-iz}-\frac{1}{\zeta+iz} \Big) \, d\zeta.
	\end{align*}
    Thus by \cite[Eq.(7.7.1)]{olver2010nist}, we conclude that 
	\begin{align}
	\begin{split}
	\mathcal{I}_k(z) &\sim 1-\frac{z}{\pi} e^{-z^2/2  } \int_{0}^{\infty} e^{ -\zeta^2/2 } \frac{d\zeta}{\zeta^2+z^2}
	\\
	&= 1-\frac12 \erfc\Big( \frac{z}{\sqrt{2}} \Big)= \frac12 \erfc\Big( -\frac{z}{\sqrt{2}} \Big).
	\end{split}
	\end{align}
	It is left to the reader as an exercise to analyse the subleading term in \eqref{RN 1_k f}.
\end{proof} 

We are now ready to prove Theorem~\ref{Micro standard}. Due to the relation \eqref{R V Q rel}, Theorem~\ref{Micro} follows immediately from Theorem~\ref{Micro standard}.
\begin{proof}[Proof of Theorem~\ref{Micro standard}]
Fix small $\eps>0$ and let $m=\lfloor N^{\frac14-\eps} \rfloor$. 
\medskip
\\
\noindent \textit{Claim 1.} Only the first $m$ terms in the summation of \eqref{K_N inner sum} contribute to $\widehat{K}_N$, i.e.,
\begin{align}  \label{claim 1}
\begin{split}
\widehat{K}_N(z,w)
&\sim 2 \, \nu^{\frac{\nu}{2}+1}     \sqrt{ K_\nu ( 2 \sqrt{\nu} |z| )  K_\nu ( 2 \sqrt{\nu} |w| ) }  
\\
&\times  |z w|^{\frac{\nu}{2}  }  \sum_{k=0}^{m-1}  \frac{  \nu^k (z \bar{w} )^k  }{k! \, \Gamma(k+\nu+1)} \mathcal{I}_k(z+\bar{w}).
\end{split}
\end{align}
Then, by Lemma~\ref{Lem RN 1 asymp} and with the choice of $m$, we have 
\begin{align}  \label{K hat asym}
\begin{split}
\widehat{K}_N(z,w)
&\sim  \nu^{\frac{\nu}{2}+1}     \sqrt{ K_\nu ( 2 \sqrt{\nu} |z| )  K_\nu ( 2 \sqrt{\nu} |w| ) }  
\\
& \times  |z w|^{\frac{\nu}{2}  } \erfc\Big( -\frac{z+\bar{w}}{\sqrt{2}} \Big) \sum_{k=0}^{m-1}  \frac{  \nu^k (z \bar{w} )^k  }{k! \, \Gamma(k+\nu+1)}.
\end{split}
\end{align}
\noindent \textit{Claim 2.} We have
\begin{equation}  \label{claim 2}
\sum_{k=0}^{m-1}  \frac{  \nu^k (z \bar{w} )^k  }{k! \, \Gamma(k+\nu+1)} 
\sim \sum_{k=0}^{\infty}  \frac{  \nu^k (z\bar{w})^k }{k! \, \Gamma(k+\nu+1)} =  \nu^{ -\frac{\nu}{2} } (z \bar{w})^{-\frac{\nu}{2}}  I_\nu( 2\sqrt{\nu z \bar{w}}  ).
\end{equation}
It follows from \eqref{K hat asym} and Claim 2 that 
\begin{align}  
\begin{split}
\widehat{K}_N(z,w)
&\sim  \Big( \frac{ z\bar{w} }{|z w|} \Big)^{-\frac{\nu}{2}  } \frac12 \erfc\Big( -\frac{z+\bar{w}}{\sqrt{2}} \Big) 
\\
&\times 2 \nu    \sqrt{ K_\nu ( 2 \sqrt{\nu} |z|)  K_\nu ( 2 \sqrt{\nu} |w| ) }    I_\nu( 2\sqrt{\nu z \bar{w}}  ).
\end{split}
\end{align}
Now Theorem~\ref{Micro standard} follows from Lemma~\ref{Lem KI Ginibre}. 

We first prove Claim 2. Note that the identity in \eqref{claim 2} follows from \cite[Eq.(10.25.2)]{olver2010nist}. Thus to obtain \eqref{claim 2}, it suffices to show that for $M>0,$
\begin{equation} \label{claim 2-1}
\Gamma(\nu+1) \sum_{k=m}^{\infty} \frac{(\nu M)^k}{k! \, \Gamma(k+\nu+1)}=o(1).
\end{equation}
By the well-known estimate of the Gamma function 
\begin{equation} \label{Gamma estimate}
1< \frac{1}{\sqrt{2\pi}} \frac{e^x}{ x^{x+\frac12} } \Gamma(x+1) < e^{ \frac{1}{12x} },
\end{equation}
we have 
\begin{align}
\begin{split}
\Gamma(\nu+1)\sum_{k=m}^{\infty}  \frac{  (\nu M)^k }{k! \, \Gamma(k+\nu+1)} 
& \le \frac{\Gamma(\nu+1)}{\sqrt{2\pi}} \sum_{k=m}^{\infty} \frac{1}{k!} \frac{e^{ k+\nu } (\nu M)^{k}   }{ (k+\nu)^{ k+\nu+\frac12 }  } 
\\
& \le  \frac{\Gamma(\nu+1)}{\sqrt{2\pi}}  \frac{e^ \nu }{ (m+\nu)^{ \nu+\frac12 }   } \sum_{k=m}^{\infty} \frac{1}{k!}  \Big(  \frac{e \nu M  }{ m+\nu }  \Big)^k
\\
& \le e^{-m+O(1)}  \sum_{k=m}^{ \infty } \frac{1}{k!}  \Big(  \frac{e \nu M  }{ m+\nu }  \Big)^k,
\end{split}
\end{align}
which implies the desired asymptotic behaviour \eqref{claim 2-1}.

Next we prove Claim 1. Notice that by Lemma~\ref{Lem RN 1 asymp}, it suffices to show that 
\begin{equation}
\Gamma(\nu+1) \sum_{k=m}^{N-1}  \frac{  (\nu M)^k  }{k! \, \Gamma(k+\nu+1)} \, \mathcal{I}_k(z+\bar{w})=o(1).
\end{equation}
Note that by \eqref{Laguerre poly}, we have $|L_j^\nu(z)| \le L_j^\nu(-|z|).$
Therefore due to Lemma~\ref{Lem RN1 bdd}, we obtain
\begin{align}
\begin{split}
|\mathcal{I}_k(z)|&=(1-\tau^2)^{\nu+2k+1} e^{\tau\sqrt{\nu} \re z } \, \Big| \sum_{j=0}^{N-1-k}  \tau^{2j} L_{j}^{\nu+2k} (  az ) \, \Big| 
\\
& \le  (1-\tau^2)^{\nu+2k+1} e^{\tau\sqrt{\nu} \re z } \sum_{j=0}^{N-1-k}  \tau^{2j} L_{j}^{\nu+2k} (  -a|z| )
\\
& = e^{ \tau\sqrt{\nu} (|z|+\re z)  } \, \mathcal{I}_k(-|z|) \le e^{ \tau\sqrt{\nu} (|z|+\re z)  }.
\end{split}
\end{align}
Thus by Lemma~\ref{Lem RN 1 asymp}, for any $z$ contained in a given compact set, there exist positive constants $C_1,C_2$ such that 
\begin{equation} \label{RN1 bdd gen}
|\mathcal{I}_k(z)| \le 
\begin{cases}
C_1 &\text{if  } k=o(\sqrt{N}),
\\
e^{C_2 \sqrt{N}}  &\text{for all  } k.
\end{cases}
\end{equation}
Similarly we utilize the estimate \eqref{Gamma estimate} to obtain
\begin{align*}
&\quad \Gamma(\nu+1)\sum_{k=m}^{N-1}  \frac{  (\nu M)^k }{k! \, \Gamma(k+\nu+1)} \, |\mathcal{I}_k(z+\bar{w})|
\\
&\le  \frac{\Gamma(\nu+1)}{\sqrt{2\pi}}  \frac{e^ \nu }{ (m+\nu)^{ \nu+\frac12 }   } \sum_{k=m}^{N-1} \frac{\lambda^k}{k!}  \, |\mathcal{I}_k(z+\bar{w})|
\\
&\le  (1+o(1)) \, e^{-m} \sum_{k=m}^{N-1} \frac{\lambda^k}{k!}    \, |\mathcal{I}_k(z+\bar{w})|,
\end{align*}
where $\lambda:=e\nu M/(m+\nu).$ 
Now let us choose a constant $\theta \in (1/4,1/2).$ Then by \eqref{RN1 bdd gen}, we have 
\begin{align}
\sum_{k=m}^{N-1} \frac{\lambda^k}{k!}  \, |\mathcal{I}_k(z+\bar{w})|
&=\sum_{k=m}^{ \lfloor N^{\theta } \rfloor }  \frac{\lambda^k}{k!} \, |\mathcal{I}_k(z+\bar{w})|+\sum_{k= \lfloor N^\theta \rfloor+1 }^{N-1} \frac{\lambda^k}{k!} \,  |\mathcal{I}_k(z+\bar{w})|
\\
&\le C_1\sum_{k=m}^{ \lfloor N^{\theta } \rfloor } \frac{\lambda^k}{k!}   + e^{C_2 \sqrt{N}} \sum_{k= \lfloor N^{\theta } \rfloor+1 }^{N-1} \frac{\lambda^k}{k!}  
\\
&\le  e^\lambda \, \Big( C_1 \mathbb{P}[X \ge m]  + e^{C_2 \sqrt{N}} \mathbb{P}[X \ge  N^{\theta } ] \Big), 
\end{align}
 where $X$ is a Poisson random variable with intensity $\lambda.$ Using the normal approximation of the Poisson distribution, there exist positive constants $c_1,c_2$ such that 
\begin{equation}
\sum_{k=m}^{N-1} \frac{\lambda^k}{k!} \,  |\mathcal{I}_k(z+\bar{w})| \le e^\lambda \Big( C_1 e^{-c_1 m^2}  + e^{C_2 \sqrt{N}} e^{-c_2 N^{ 2\theta } } \Big)=o(1),
\end{equation}
which completes the proof. 
\end{proof}

\noindent \textbf{Acknowledgements.} The authors gratefully acknowledge Seung-Yeop Lee for several helpful comments concerning the proof of Theorem~\ref{Thm_NWishart}.
The present work was initiated when the authors visited the Department of Mathematics at KTH Royal Institute of Technology, and we wish to express our gratitude to Maurice Duits, H\aa kan Hedenmalm, and Kurt Johansson for the invitation and hospitality.

\bibliographystyle{abbrv}
\bibliography{RMTbib}

\end{document}